\pgfplotsset{compat = 1.15}
\definecolor{blue}{rgb}{0,0,0.8}
\definecolor{red}{rgb}{0.8,0,0}
\definecolor{darkgreen}{rgb}{0,0.6,0}
\definecolor{orange}{rgb}{0.98,0.5,0}
\newcommand{\ee}{\eta}
\newcommand{\ve}{\varepsilon}
\newcommand{\Star}[1]{{#1}^{*}}
\newcommand{\supp}{\operatorname{supp}}
\newcommand{\ext}{\operatorname{ext}}
\newcommand{\Int}{\operatorname{int}}
\newtheorem{theorem}{Theorem}
\newtheorem{corollary}[theorem]{Corollary}
\newtheorem{lemma}[theorem]{Lemma}
\newtheorem{proposition}[theorem]{Proposition}
\newtheorem{definition}[theorem]{Definition}
\newtheorem{remark}[theorem]{Remark}
\newtheorem{example}[theorem]{Example}
\begin{document}

\begin{frontmatter}

\title{On extremal problems of Delsarte type \\ for positive definite functions on LCA groups \\[2mm]
{\small To the memory of Bent Fuglede whose ideas have a lasting influence on us.} }

\author[1]{E. E. Berdysheva}
\ead{elena.berdysheva@uct.ac.za}
\author[1]{M.~D.~Ramabulana}
\ead{rmbmit001@myuct.ac.za}
\author[3]{\corref{cor1}Sz. Gy. R\'ev\'esz}
\ead{revesz.szilard@renyi.hu}
\cortext[cor1]{Corresponding author}

\affiliation[1]{organization={Department of Mathematics and Applied Mathematics, University of Cape Town},
	addressline={Private Bag X1, 7701, Rondebosch},
	city={Cape Town},
	postcode={7700},
	country={South Africa}}

\affiliation[3]{organization={HUN-REN Alfr\'ed R\'enyi Institute of Mathematics},
	addressline={Re\'altanoda utca 13-15},
	city={Budapest},
	postcode={1053},
	country={Hungary}}

\begin{abstract}
	A unifying framework for some extremal problems on locally compact Abelian groups is considered, special cases of which include the Delsarte and Tur\'an extremal problems.  A slight variation of the extremal problem is introduced and the different formulations are studied for equivalence. Extending previous work, a general result on existence of extremal functions for the new variant is proved under a certain general topological condition.
\end{abstract}

\begin{keyword}
	Locally compact Abelian groups, positive definite functions, extremal problems, Delsarte problem, Tur\'an problem
	
\MSC[2020] 43A35

\end{keyword}

\end{frontmatter}

\section{Introduction}

Let $G$ be a Hausdorff locally compact Abelian (LCA) group with identity $0$ and a fixed Haar measure $\lambda_{G}$. A function $f: G \to \mathbb{R}$ is positive definite if the inequality
\begin{equation*}
\sum_{i = 1}^{n}\sum_{j=1}^{n}c_{i}\overline{c_{j}}f(g_{i}-g_{j}) \ge 0
\end{equation*}
holds for all choices of $n \in \mathbb{N}$, $c_{i} \in \mathbb{C}$, and $g_{i} \in G$. Denote by $P_{1}(G)$ the collection of continuous positive definite real-valued functions $f: G \to \mathbb{R}$ such that $f(0)=1$. For a function $f: G \to \mathbb{R}$, denote by $f_{+}$, respectively, $f_{-}$ the positive part and the negative part of $f$, given by
\begin{equation*}
f_{+}(g) := \max{\{f(g), 0 \}} \hspace{2mm} \mbox{  and   } \hspace{2mm}  f_{-}(g) := \max{\{-f(g), 0\}} \hspace{2mm} \mbox{ for all } g \in G.
\end{equation*}

Denote by $\supp f$ its support, defined as $\supp f :=\overline{\{ g \in G: f(g) \ne 0\}}$.
In \cite{elena-szilard}, the following general setup for some extremal problems on a LCA group $G$ is introduced. Let $\Omega_{+}$ and $ \Omega_{-}$ be subsets of $G$\footnote{Note that in \cite{elena-szilard}, $\Omega_{+}$ and $\Omega_{-}$ are assumed to be open, a restriction we do not apply.}. Consider the collection
\begin{equation}\label{supportclass}
\mathcal{F}^{*}_{G}(\Omega_{+}, \Omega_{-}):= \Bigl\{f \in P_{1}(G) \cap L^1(G) : \supp f_{+} \subset \Omega_{+}, \supp f_{-} \subset \Omega_{-} \Bigr \},
\end{equation}
and the extremal problem of determining the following extremal value\footnote{We adopt the convention that the extremal value is $0$ if $\mathcal{F}^{*}_{G}(\Omega_{+}, \Omega_{-})$ is empty. The same will apply to \eqref{extremalproblem}.}:
\begin{equation}\label{supportedproblem}
\mathcal{C}^{*}_{G}(\Omega_{+}, \Omega_{-}):= \sup_{f \in \mathcal{F}^{*}_{G}(\Omega_{+},  \Omega_{-})} \int_{G}f\textup{d}\lambda_{G}.
\end{equation}

To motivate this general set-up, it is noted in \cite{elena-szilard} that when $\Omega_{-} =G$, writing $\Omega_{+} = \Omega$, the extremal problem \eqref{supportedproblem} specialises to the so-called Delsarte extremal problem, which concerns the extremal value:
\begin{align}\label{delsarte}
\mathcal{D}^{*}_{G}(\Omega) := \sup _{f \in \mathcal{F}^{*}_{G}(\Omega, G)}\int_{G}f\mbox{d}\lambda_{G}.
\end{align}
When $\Omega_{+} = \Omega_{-} = \Omega$, it specialises to the so-called Tur\'{a}n extremal problem\footnote{Actually, the problem was introduced well before Tur\'an by Siegel \cite{siegel} and, in a different setting, it goes back to Carath\'{e}odory \cite{caratheodory} and Fej\'{e}r \cite{fejer}. For the history of the problem and its somewhat inadequate name, see \cite{turan-szilard}.}, which concerns the extremal value:
\begin{align}\label{turan}
\mathcal{T}^{*}_{G}(\Omega) := \sup _{f \in \mathcal{F}_{G}(\Omega, \Omega)}\int_{G}f\mbox{d}\lambda_{G}.
\end{align}

The idea of studying this type of extremal problems in the general setting of LCA groups  goes back to \cite{kolrev}. However, we note that the extremal problem of maximising the $L^{2}$-norm was already considered in the group setting in \cite{domar}. Subsequently, extremal problems of this type on LCA groups have been studied in \cite{turan-szilard, elena-szilard, marcell-zsuzsa, ramabulana}. Whereas \cite{kolrev, turan-szilard, elena-szilard} derive estimates for the extremal values and relate them to packing in LCA groups, the theme in \cite{marcell-zsuzsa, ramabulana} is to study the existence of an extremal function. In \cite{marcell-zsuzsa}, the existence of an extremal function is proved for a collection of band-limited functions, i.e., functions whose Fourier transforms vanish outside some prescribed subset. In \cite{ramabulana}, existence is proved without the restriction that the functions in the collection are band-limited, generalising \cite[Proposition A.1]{cohnlaatsal} to arbitrary LCA groups.

Although the result of \cite{ramabulana} is quite general, the existence of an extremal function for the Tur\'{a}n and Delsarte extremal problems was known before \cite{ramabulana} for specific groups $G$ and specific subsets $\Omega_{+}$. For instance, for the Tur\'{a}n problem in $\mathbb{R}^d$ it was known that if $\Omega_{+}$ is a closed convex subset of $\mathbb{R}^{d}$ that tiles $\mathbb{R}^d$ by translations or when $\Omega_{+}$ is a closed ball in $\mathbb{R}^d$, then an extremal function exists (see \cite{elena-arestov-2, kolrev0, gorbachev1}). On the other hand, if $\Omega_{+}$ is, for instance, an open convex set in $\mathbb{R}^d$, an extremal function does not exist.

If one reformulates the extremal problem in \eqref{supportedproblem}, modifying the support condition and imposing a topological condition on the sets $\Omega_{+}$ and $\Omega_{-}$, and keeping all else the same, then we get a related extremal problem. To be more precise, and delve more into the subject of this paper, let $\Omega_{+}$ and $\Omega_{-}$ be subsets of $G$. Let us fix the following notation: for a real-valued function $f: G \to \mathbb{R}$ and $a,b \in \overline{\mathbb{R}}$, where $\overline{\mathbb{R}}$ denotes the extended real numbers, write $f^{-1}(a,b)$ for the preimage $f^{-1}\left((a,b)\right)$ of $(a,b)$ under $f$. Consider the collection
\begin{equation}\label{oursetting}
\mathcal{F}_{G}(\Omega_{+}, \Omega_{-}):= \Bigl\{f \in P_{1}(G) \cap L^1(G): f^{-1}(0,\infty) \subset \Omega_{+}, f^{-1}(-\infty,0) \subset \Omega_{-} \Bigr \},
\end{equation}
and the extremal value:
\begin{align}\label{extremalproblem}
\mathcal{C}_{G}(\Omega_{+}, \Omega_{-}):= \sup _{f \in \mathcal{F}_{G}(\Omega_{+}, \Omega_{-})}\int_{G}f\mbox{d}\lambda_{G}.
\end{align}

Note the resemblance between the function classes $\mathcal{F}^{*}_{G}(\Omega_{+}, \Omega_{-})$ and $\mathcal{F}_{G}(\Omega_{+}, \Omega_{-})$. Yet, the two function classes differ in that the conditions $\supp f_{+} \subset \Omega_{+}$ and $\supp f_{-} \subset \Omega_{-}$ are required for $\mathcal{F}^{*}_{G}(\Omega_{+}, \Omega_{-})$, whereas the weaker conditions that $f^{-1}(0, \infty) \subset \Omega_{+}$ and  $f^{-1}(-\infty, 0)  \subset \Omega_{-}$ are required for $\mathcal{F}_{G}(\Omega_{+}, \Omega_{-})$. One of the aims of this paper is to study the extent to which these two formulations of the extremal problems are equivalent. However, our main goal is to show general existence results for these slightly reformulated versions of our general extremal problems, see Theorem \ref{generalcase}. Actually, we show existence under a mild topological condition on the boundary $\partial \Omega_{\pm}$ of $\Omega_{\pm}$. To state the condition, recall that the exterior $\ext \Omega$ of a subset $\Omega$ in a topological space $X$ is the set $X \setminus \overline{\Omega}$ where $\overline{\Omega}$ stands for the closure of the set $\Omega$.

\begin{definition}

Let $\Omega$ be a subset of a topological space $X$. We call $\Omega$ a boundary-coherent set if $\partial \Omega \subset \overline{\ext \Omega}$.

\end{definition}

The definition above is equivalent to saying that  $\partial \Omega \subset \partial{(\ext \Omega)}$. This property means that boundary points of $\Omega$ can be approximated arbitrarily closely from the exterior of $\Omega$. Equivalently, we could have said that the complement of $\Int \Omega$ is fat where $\Int{\Omega}$ stands for the interior of the set $\Omega$.

We show the existence of an extremal function for the extremal problem \eqref{extremalproblem} under the condition that $\Omega_{\pm}$ are boundary-coherent.

Having introduced the extremal problem \eqref{extremalproblem}, we have the corresponding Delsarte and Tur\'{a}n problems:
\begin{align}\label{delsarte'}
\mathcal{D}_{G}(\Omega) := \sup _{f \in \mathcal{F}_{G}(\Omega, G)}\int_{G}f\mbox{d}\lambda_{G}, \hspace{0.5cm} \mathcal{T}_{G}(\Omega) := \sup _{f \in \mathcal{F}_{G}(\Omega, \Omega)}\int_{G}f\mbox{d}\lambda_{G}.
\end{align}

\section{Notation and Preliminaries}

Let $C(G)$ be the space of continuous functions on $G$ to the complex numbers and put on it the topology of uniform convergence on compact sets. The dual group $\widehat{G}$ of $G$ consists of continuous homomorphisms of $G$ to the multiplicative group $\mathbb{T} = \{z \in \mathbb{C}: |z| = 1 \}$. With respect to pointwise operations and the subspace topology that it inherits as a subset of $C(G)$, $\widehat{G}$ is a LCA group. For a measure space $X$ with a measure $\mu$, $L^p(X)$ denotes the usual Banach space of $p$-integrable complex-valued functions on $X$ for $1 \le p < \infty$, and the space of essentially bounded functions on $X$ for $p = \infty$. The Fourier transform of $f \in L^1(G)$ is defined by the formula:
\begin{equation*}
\widehat{f}(\chi) := \int_{G}f\overline{\chi}\mbox{d}\lambda_{G} \mbox{ for all } \chi \in \widehat{G}.
\end{equation*}

It is well-known that the Fourier transform of an integrable continuous positive definite function is non-negative. Moreover, since $0 \le \widehat{f}(0) = \int_{G}f\textup{d}\lambda_{G}$, it follows that the integral of an integrable continuous positive definite function is non-negative. Related to the notion of a positive definite function is that of an integrally positive definite function. Firstly, recall that the convolution $f \ast g$ of functions $f,g: G \to \mathbb{C}$ is defined as
\begin{equation*}
f \ast g (x) := \int_{G}f(y)g(x - y)\mbox{d}\lambda_{G}(y) \mbox{ for all } x \in G,
\end{equation*}
whenever the integral exists. Write $\Star{f}(g) := \overline{f(-g)}$. A function $f \in L^{\infty}(G)$ is integrally positive definite if the inequality
\begin{equation*}
\int_{G}(\Star{h} \ast h)f\mbox{d}\lambda_{G} = \int_{G}\int_{G}f(y - x)h(x)\overline{h(y)}\textup{d}\lambda_{G}(x)\textup{d}\lambda_{G}(y) \ge 0
\end{equation*}
holds for all continuous compactly supported complex valued functions $h$ on $G$. According to \cite[Theorem 1.7.3]{sasvari} an integrally positive definite function agrees locally almost everywhere with a continuous positive definite function. When $G$ is $\sigma$-compact, i.e., if $G$ can be written as a countable union of compact subsets, then locally almost everywhere is the same as almost everywhere. Therefore, on a $\sigma$-compact group an integrally positive definite function agrees almost everywhere with a continuous positive definite function.

For brevity, we write $K \Subset G$ to mean that $K$ is a compact subset of $G$.

\section{Extremal Problems and $\sigma$-compact Groups}

In this section, we show that the computation of the extremal values studied in this paper may be reduced to $\sigma$-compact LCA groups.  We record some preliminary lemmas.

\begin{lemma}\label{sigmacompactgeneration}
	Let $G$ be a LCA group, and let $(K_n)_{n \in \mathbb{N}}$ be a sequence of $\sigma$-compact subsets of $G$. The group $H$ generated by the union $\bigcup_{n \in \mathbb{N}}K_n$ is $\sigma$-compact. Furthermore, if at least one of the $K_{n}$'s has non-empty interior, then $H$ is open.
\end{lemma}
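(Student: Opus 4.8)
The plan is to treat the two assertions separately, using only the basic topological-group facts that addition and inversion are continuous and that the continuous image of a compact set is compact. First I would reduce the union to a single countable family of compacta: since each $K_n$ is $\sigma$-compact, write $K_n = \bigcup_m K_{n,m}$ with each $K_{n,m}$ compact, so that $S := \bigcup_n K_n = \bigcup_{n,m} K_{n,m}$ is a countable union of compact sets, hence $\sigma$-compact. I then set $V := S \cup (-S) \cup \{0\}$, which is again $\sigma$-compact because $x \mapsto -x$ is a homeomorphism (so $-S$ is $\sigma$-compact) and $\{0\}$ is compact. Since $G$ is abelian, the subgroup $H$ generated by $S$ is precisely the set of all finite sums of elements of $V$, that is $H = \bigcup_{k=1}^{\infty} V^{(k)}$, where $V^{(k)} := V + \cdots + V$ with $k$ summands; because $0 \in V$, these sets form an increasing chain.

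The core of the first assertion is that each $V^{(k)}$ is $\sigma$-compact. Writing $V = \bigcup_j C_j$ with every $C_j$ compact, we get $V^{(k)} = \bigcup_{(j_1,\dots,j_k)}\bigl(C_{j_1} + \cdots + C_{j_k}\bigr)$, and each summand is the image of the compact product $C_{j_1} \times \cdots \times C_{j_k}$ under the continuous $k$-fold addition map, hence compact. As this is a countable union of compacta, $V^{(k)}$ is $\sigma$-compact, and therefore so is $H = \bigcup_k V^{(k)}$, being a countable union of $\sigma$-compact sets.

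For the second assertion I would invoke the standard fact that a subgroup of a topological group is open as soon as it has non-empty interior. Concretely, if $W \subseteq H$ is open and non-empty, fix $w_0 \in W$; then $W - w_0$ is an open neighbourhood of $0$ contained in $H$ (as $H$ is a subgroup), and the decomposition $H = \bigcup_{h \in H}\bigl(h + (W - w_0)\bigr)$ exhibits $H$ as a union of open sets. Now if some $K_{n_0}$ has non-empty interior, then $\Int K_{n_0}$ is a non-empty open subset of $S \subseteq H$, so $H$ has non-empty interior and is therefore open.

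I do not expect a genuine obstacle here; the argument is essentially bookkeeping. The only points that require the slightest care are verifying that a finite sumset of compacta is compact (via continuity of addition) and that $\sigma$-compactness is preserved under countable unions and under the inversion homeomorphism — both routine — together with, for the openness claim, the translation argument that upgrades \emph{non-empty interior} to \emph{open} for subgroups.
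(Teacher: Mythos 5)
Your proof is correct. The paper does not actually prove this lemma --- it declares it standard and cites \cite{ramabulana} and \cite{deitmar} --- and your argument is exactly that standard one: reduce to a countable family of compacta, note that each $k$-fold sumset $V^{(k)}$ is a countable union of continuous images of compact products (hence $\sigma$-compact), take the countable increasing union to get $H$, and upgrade non-empty interior to openness by translating a neighbourhood of $0$ around the subgroup.
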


The above statement is standard, see \cite[Lemma 5]{ramabulana} and e.g. \cite[Proposition 1.2.1(c)]{deitmar}.

If $H$ is a subgroup of a LCA group $G$ and $\varphi: H \to \mathbb{R}$ is a function, its trivial extension is the function $\widetilde{\varphi}: G \to \mathbb{R}$, given by
\begin{equation*}
\widetilde{\varphi}(g) := \begin{cases}
\varphi(g) & \text{if } g \in H, \\
0  & \text{if } g \in G\symbol{92}H.
\end{cases}
\end{equation*}

\begin{lemma}\label{extensionlemma}
	Let $H$ be an open subgroup of a LCA group $G$, and let $\Omega_{+}$ and $\Omega_{-}$ be subsets of $G$. If a function $\varphi : H \to \mathbb{R}$ is in $\mathcal{F}_{H}(\Omega_{+} \cap H, \Omega_{-} \cap H)$, then its trivial extension $\widetilde{\varphi}: G \to \mathbb{R}$ is in $\mathcal{F}_{G}(\Omega_{+}, \Omega_{-})$.
\end{lemma}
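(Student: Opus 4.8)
The plan is to verify that $\widetilde{\varphi}$ satisfies all the defining conditions of $\mathcal{F}_G(\Omega_+,\Omega_-)$, namely that it is continuous, positive definite, normalized with $\widetilde{\varphi}(0)=1$, integrable, and has the two preimage inclusions $\widetilde{\varphi}^{-1}(0,\infty)\subset\Omega_+$ and $\widetilde{\varphi}^{-1}(-\infty,0)\subset\Omega_-$. The crucial structural fact I would exploit throughout is that $H$ is \emph{open} in $G$; since $H$ is an open subgroup it is automatically closed as well (its complement is a union of cosets, each of which is open). This dual open-and-closed nature is what makes trivial extension behave well.

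First I would check continuity of $\widetilde\varphi$. On the open set $H$ it agrees with the continuous function $\varphi$, so it is continuous there; on the open set $G\setminus H$ it is identically $0$, hence continuous there too. Since $H$ and $G\setminus H$ are both open and cover $G$, continuity of $\widetilde\varphi$ on all of $G$ follows. The normalization $\widetilde\varphi(0)=\varphi(0)=1$ is immediate since $0\in H$. For integrability, I would use that the Haar measure $\lambda_G$ restricted to the open subgroup $H$ is a Haar measure on $H$, so $\int_G|\widetilde\varphi|\,\mathrm{d}\lambda_G=\int_H|\varphi|\,\mathrm{d}\lambda_H<\infty$, giving $\widetilde\varphi\in L^1(G)$.

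Next I would establish positive definiteness of $\widetilde\varphi$. Given finitely many $c_i\in\mathbb{C}$ and $g_i\in G$, I would split the sum $\sum_{i,j}c_i\overline{c_j}\widetilde\varphi(g_i-g_j)$ according to the cosets of $H$. Because $\widetilde\varphi$ vanishes off $H$, the term for the pair $(i,j)$ contributes only when $g_i-g_j\in H$, i.e. when $g_i$ and $g_j$ lie in the same coset of $H$. Grouping the indices by coset therefore decomposes the double sum into a sum of block sums, one per coset, and within each coset the differences $g_i-g_j$ lie in $H$ so $\widetilde\varphi$ equals $\varphi$ there. Each block is then exactly a positive definiteness sum for $\varphi$ on $H$ (after translating one representative to pin down the coset), which is nonnegative since $\varphi\in P_1(H)$; summing the nonnegative blocks gives the result.

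Finally I would verify the support-type conditions. The key observation is that $\{g:\widetilde\varphi(g)>0\}=\{g\in H:\varphi(g)>0\}=\varphi^{-1}(0,\infty)$ because $\widetilde\varphi$ is zero outside $H$. By hypothesis $\varphi\in\mathcal{F}_H(\Omega_+\cap H,\Omega_-\cap H)$, so this set is contained in $\Omega_+\cap H\subset\Omega_+$; the negative-part condition follows symmetrically. I expect the positive-definiteness step to be the main obstacle, since it is the only place where one must do something genuinely combinatorial rather than purely formal; the coset-decomposition argument is the natural tool, and the fact that $H$ is both open and closed is what guarantees the decomposition is clean and the off-$H$ terms vanish identically rather than merely being small.
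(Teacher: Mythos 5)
Your proposal is correct and complete, and it follows essentially the same route as the paper, which does not spell out a proof but defers to the analogous statement \cite[Lemma~1]{ramabulana}: the standard argument there is exactly your combination of open-and-closed subgroup topology for continuity, coset-block decomposition for positive definiteness, restriction of Haar measure for integrability, and the observation that $\widetilde{\varphi}^{-1}(0,\infty)=\varphi^{-1}(0,\infty)\subset\Omega_{+}\cap H\subset\Omega_{+}$ (and symmetrically for the negative part). Nothing is missing; your write-up is in fact a self-contained version of the proof the paper only cites.
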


For a proof of Lemma \ref{extensionlemma}, we refer the reader to a proof of an analogous statement \cite[Lemma~1]{ramabulana}. We shall also make use of the following lemma.

\begin{lemma}\label{restrictionlemma}
	Let $H$ be an open subgroup of a LCA group $G$, and let $\Omega_{+}$ and $\Omega_{-}$ be subsets of $G$ with $\Omega_{+} \subset H$. If a function $\varphi: G \to \mathbb{R}$ is in $\mathcal{F}_{G}(\Omega_{+}, \Omega_{-})$, then its restriction $\varphi |_{H}: H \to \mathbb{R}$ to the subgroup $H$ is in $\mathcal{F}_{H}(\Omega_{+}, \Omega_{-} \cap H)$.
\end{lemma}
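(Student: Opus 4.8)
The plan is to verify directly, one by one, the defining conditions of \eqref{oursetting} for membership of $\varphi|_H$ in $\mathcal{F}_H(\Omega_+, \Omega_- \cap H)$. The structural facts I will lean on are that an open subgroup $H$ of a LCA group $G$ is automatically closed, hence Borel measurable, and that, after a suitable normalization, the Haar measure $\lambda_H$ of $H$ may be taken to be the restriction $\lambda_G|_H$ of $\lambda_G$ to the measurable subsets contained in $H$ (this is standard when $H$ is open, as then $G/H$ is discrete). This identification of Haar measures is the only point requiring genuine care; the remainder is an unwinding of definitions.

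First I would establish that $\varphi|_H \in P_1(H)$. Continuity is immediate because $H$ carries the subspace topology, so the restriction of the continuous function $\varphi$ stays continuous. For positive definiteness, fix $n \in \mathbb{N}$, coefficients $c_1, \dots, c_n \in \mathbb{C}$ and points $h_1, \dots, h_n \in H$; since $H$ is a subgroup, every difference $h_i - h_j$ lies in $H$, whence $\varphi|_H(h_i - h_j) = \varphi(h_i - h_j)$, and the required inequality for $\varphi|_H$ is precisely the positive-definiteness inequality for $\varphi$ evaluated at points of $H \subset G$. Finally $\varphi|_H(0) = \varphi(0) = 1$ since $0 \in H$. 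Hence $\varphi|_H$ is a continuous, real-valued, positive definite function on $H$ normalized at the identity.

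Next, integrability is automatic from the choice $\lambda_H = \lambda_G|_H$: as $H$ is measurable and $\varphi|_H$ coincides with $\varphi$ on $H$,
\[
\int_H |\varphi| \, \mathrm{d}\lambda_H = \int_H |\varphi| \, \mathrm{d}\lambda_G \le \int_G |\varphi| \, \mathrm{d}\lambda_G < \infty,
\]
so that $\varphi|_H \in L^1(H)$. (Any other normalization of $\lambda_H$ differs only by a positive constant and does not affect this conclusion.)

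It remains to check the two preimage conditions, which reduce to elementary set theory. Writing preimages within $H$, one has $(\varphi|_H)^{-1}(0,\infty) = H \cap \varphi^{-1}(0,\infty)$; since $\varphi \in \mathcal{F}_G(\Omega_+, \Omega_-)$ gives $\varphi^{-1}(0,\infty) \subset \Omega_+$, this yields $(\varphi|_H)^{-1}(0,\infty) \subset H \cap \Omega_+ = \Omega_+$, the final equality using the hypothesis $\Omega_+ \subset H$. Symmetrically, $(\varphi|_H)^{-1}(-\infty,0) = H \cap \varphi^{-1}(-\infty,0) \subset H \cap \Omega_- = \Omega_- \cap H$. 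Combining this with the two preceding paragraphs gives $\varphi|_H \in \mathcal{F}_H(\Omega_+, \Omega_- \cap H)$, as claimed. I expect the Haar-measure identification $\lambda_H = \lambda_G|_H$ to be the only step that is not purely formal.
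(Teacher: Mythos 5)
Your proof is correct and follows essentially the same route as the paper's: a direct verification that restriction to the open subgroup $H$ preserves continuity, positive definiteness, the normalization at $0$, and the preimage containments (using $\Omega_+ \subset H$ for the positive part). You are in fact more thorough than the paper, which omits the $L^1(H)$ check that you carry out via the identification $\lambda_H = \lambda_G|_H$ --- the same normalization the paper adopts explicitly in the proof of Theorem~\ref{reduction}.
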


\begin{proof}
	Suppose that $\varphi: G \to \mathbb{R}$ is a continuous positive definite function with $\varphi^{-1}(0, \infty) \subset \Omega_{+}$ and $\varphi ^{-1}(-\infty, 0) \subset \Omega_{-}$, and satisfying $\varphi(0)=1$. Then its restriction $\varphi|_{H}: H \to \mathbb{R}$ to $H$ is a continuous positive definite function with $\varphi|_{H}^{-1}(0, \infty) \subset \Omega_{+}$ and $\varphi|_{H}^{-1}(-\infty, 0) \subset \Omega_{-} \cap H$ , and satisfies $\varphi|_{H}^{-1}(0) = 1$.
\end{proof}

\begin{lemma}\label{sigma-compact-lemma}
	A Haar measurable subset $\Omega$ of a LCA group $G$ is $\sigma$-finite if and only if it is contained in an open $\sigma$-compact subgroup.
\end{lemma}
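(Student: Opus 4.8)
The plan is to prove the two implications separately, using a fixed open $\sigma$-compact subgroup as an organizing device. For the easy (``if'') direction, suppose $\Omega \subseteq H$ with $H$ an open $\sigma$-compact subgroup. Write $H = \bigcup_{n} K_n$ with each $K_n \Subset G$; since compact sets have finite Haar measure, this exhibits $H$ as $\sigma$-finite, and then $\Omega = \bigcup_{n}(\Omega \cap K_n)$ with $\lambda_G(\Omega \cap K_n) \le \lambda_G(K_n) < \infty$ shows $\Omega$ is $\sigma$-finite as well.

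For the ``only if'' direction, I would first fix one open $\sigma$-compact subgroup $H_0$: starting from a compact symmetric neighborhood $V$ of $0$, Lemma~\ref{sigmacompactgeneration} (applied to the single $\sigma$-compact set $V$, which has non-empty interior) produces such an $H_0 \supseteq V$. The cosets $\{g + H_0 : g \in G\}$ are pairwise disjoint open sets partitioning $G$, and they are the right structure to organize $\Omega$. The heart of the argument is the following claim: any Haar measurable set $F$ with $\lambda_G(F) < \infty$ meets only countably many of these cosets. Granting the claim, I would finish as follows: writing the $\sigma$-finite set $\Omega = \bigcup_{n} E_n$ with $\lambda_G(E_n) < \infty$, each $E_n$ meets only countably many cosets, hence so does $\Omega$, say those represented by a countable set $S \subseteq G$. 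Then $\Omega \subseteq \bigcup_{t \in S}(t + H_0)$, a countable union of translates of $H_0$ and therefore a $\sigma$-compact set with non-empty interior; applying Lemma~\ref{sigmacompactgeneration} once more, the subgroup $H$ it generates is open and $\sigma$-compact, and $\Omega \subseteq H$, as required.

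It remains to justify the claim, which is where I expect the main obstacle to lie: a priori a finite-measure set---indeed even a null set of points---could scatter across uncountably many cosets, and this possibility must be excluded. Here I would invoke outer regularity of the Haar measure to choose an open $U \supseteq F$ with $\lambda_G(U) < \infty$. For every coset $t + H_0$ that $U$ meets, the intersection $U \cap (t + H_0)$ is a non-empty open set and therefore has strictly positive Haar measure. These intersections are pairwise disjoint, so if $U$ met uncountably many cosets we would obtain uncountably many pairwise disjoint subsets of $U$ of positive measure; the standard observation that for each $n$ only finitely many of them can have measure exceeding $1/n$ (their measures summing to at most $\lambda_G(U)$) then forces $\lambda_G(U) = \infty$, a contradiction. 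Hence $U$, and a fortiori $F \subseteq U$, meets only countably many cosets. The two properties used here---outer regularity on finite-measure sets and strict positivity on non-empty open sets---are precisely what tame the potentially pathological null part of $\Omega$; this step genuinely uses the locally compact group structure rather than abstract measure theory, and is the crux of the proof.
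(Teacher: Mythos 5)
Your proof is correct and follows essentially the same route as the paper's: outer regularity to replace finite-measure pieces by finite-measure open sets, the observation that disjoint non-empty open coset intersections each have positive measure (so only countably many cosets of a fixed open $\sigma$-compact subgroup can be met), and Lemma~\ref{sigmacompactgeneration} to generate the final open $\sigma$-compact subgroup. The only differences are organizational --- you fix a single subgroup $H_0$ and make one global countability claim, while the paper builds an intermediate subgroup $H_n$ for each piece before generating $H$ --- and you helpfully make explicit the construction of the initial subgroup, which the paper leaves implicit.
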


\begin{proof} It is obvious that if $\Omega$ is Haar measurable and is contained in an open $\sigma$-compact subgroup, then it is $\sigma$-finite. So, it remains to show that if $\Omega$ is $\sigma$-finite, then it is contained in an open $\sigma$-compact subgroup. To that end, let us write $\Omega$ as a union
\begin{equation*}
\Omega = \bigcup_{n=1}^{\infty}A_{n},
\end{equation*}
where $A_{n}$ has finite Haar measure for all $n \in \mathbb{N}$. By outer regularity of the Haar measure on $G$, for all $n \in \mathbb{N}$, there is an open set $U_{n}$ in $G$ containing $A_{n}$ with $\lambda_{G}(U_{n}) < \infty$. We claim that for each $n \in \mathbb{N}$, $U_{n}$ lies in an open $\sigma$-compact subgroup $H_{n}$. To see this, let $K$ be any $\sigma$-compact open subgroup of $G$ and note that $G$ is a disjoint union of cosets $g + K$ of $K$ with $g$ running through the set of coset representatives of $K$. For each $n \in \mathbb{N}$, observe that since the cosets of $K$ are disjoint and cover $G$, the collection of subsets $(g + K) \cap U_{n}$ of $U_{n}$ as $g$ runs through the set of coset representatives of $K$ are disjoint and their union is $U_{n}$. Now, being open sets, either $(g + K) \cap U_{n} = \emptyset$ or $\lambda_{G}((g + K) \cap U_{n}) >0$. However, $\lambda_{G}((g + K) \cap U_{n}) >0$, and hence $(g + K) \cap U_{n} \ne \emptyset$, for at most countably many cosets $g + K$ of $K$, otherwise a contradiction of the finitude of the Haar measure of $U_{n}$ would arise. By Lemma \ref{sigmacompactgeneration}, $U_{n}$ is contained in the open $\sigma$-compact subgroup generated by the union of $K$ and the countably many $\sigma$-compact cosets of $K$ that meet $U_{n}$ nontrivially. Denote this subgroup by $H_{n}$. We obtain a sequence $(H_{n})_{n \in \mathbb{N}}$ of open $\sigma$-compact subgroups of $G$ having $A_{n} \subset U_{n} \subset H_{n}$ for all $n \in \mathbb{N}$, to which we apply Lemma \ref{sigmacompactgeneration} once again to obtain an open $\sigma$-compact subgroup $H$ of $G$ generated by $\bigcup_{n=1}^{\infty}H_{n}$. Clearly, $H$ contains $\Omega$, so the lemma is proved.
	
\end{proof}

\begin{theorem}\label{reduction}
	Let $G$ be a LCA group, and $\Omega_{+}$ and $\Omega_{-}$ be symmetric subsets of $G$ with $\Omega_{+}$ a $\sigma$-finite neighbourhood of $0$. Then there is an open $\sigma$-compact subgroup $H$ of $G$ containing $\Omega_{+}$ such that
\begin{equation*}
\mathcal{C}_{G}(\Omega_{+}, \Omega_{-}) = \mathcal{C}_{H}(\Omega_{+}, \Omega_{-}\cap H).
\end{equation*}
\end{theorem}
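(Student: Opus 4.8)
The plan is to produce the subgroup $H$ first and then establish the claimed equality by proving the two inequalities separately, using Lemmas \ref{extensionlemma} and \ref{restrictionlemma} together with a sign observation coming from the support condition defining $\mathcal{F}_G$. Since $\Omega_+$ is Haar measurable and $\sigma$-finite, Lemma \ref{sigma-compact-lemma} provides an open $\sigma$-compact subgroup $H$ of $G$ with $\Omega_+ \subset H$; I fix such an $H$. Crucially, because $H$ is open I may normalise the Haar measure on $H$ to be the restriction $\lambda_H := \lambda_G|_H$, so that $\int_G \psi \, \mathrm{d}\lambda_G = \int_H \psi|_H \, \mathrm{d}\lambda_H$ for any integrable $\psi$ vanishing off $H$, and $\int_H u \, \mathrm{d}\lambda_G = \int_H u \, \mathrm{d}\lambda_H$ for integrable $u$. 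Since $\Omega_+ \subset H$ we also have $\Omega_+ \cap H = \Omega_+$, so the class on $H$ appearing in Lemma \ref{extensionlemma} is exactly $\mathcal{F}_H(\Omega_+ \cap H, \Omega_- \cap H) = \mathcal{F}_H(\Omega_+, \Omega_- \cap H)$.

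For the inequality $\mathcal{C}_G(\Omega_+,\Omega_-) \ge \mathcal{C}_H(\Omega_+, \Omega_-\cap H)$, I would take an arbitrary $\varphi \in \mathcal{F}_H(\Omega_+, \Omega_-\cap H)$, form its trivial extension $\widetilde{\varphi}$, and apply Lemma \ref{extensionlemma} to conclude $\widetilde{\varphi} \in \mathcal{F}_G(\Omega_+,\Omega_-)$. As $\widetilde{\varphi}$ vanishes off $H$, its integral over $G$ equals $\int_H \varphi \, \mathrm{d}\lambda_H$, so $\mathcal{C}_G(\Omega_+,\Omega_-) \ge \int_H \varphi \, \mathrm{d}\lambda_H$; taking the supremum over $\varphi$ yields the inequality. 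This direction is pure bookkeeping once the extension lemma is in hand.

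The reverse inequality is where the only genuine idea appears. Given any $f \in \mathcal{F}_G(\Omega_+,\Omega_-)$, the defining condition $f^{-1}(0,\infty) \subset \Omega_+ \subset H$ forces $f \le 0$ on $G \setminus H$: if $g \notin H$ had $f(g) > 0$, then $g \in f^{-1}(0,\infty) \subset H$, a contradiction. Hence $\int_{G\setminus H} f \, \mathrm{d}\lambda_G \le 0$, this integral being finite since $f \in L^1(G)$. Splitting $\int_G f \, \mathrm{d}\lambda_G = \int_H f \, \mathrm{d}\lambda_G + \int_{G\setminus H} f \, \mathrm{d}\lambda_G$ gives $\int_G f \, \mathrm{d}\lambda_G \le \int_H f \, \mathrm{d}\lambda_H$. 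By Lemma \ref{restrictionlemma}, applicable precisely because $\Omega_+ \subset H$, the restriction $f|_H$ lies in $\mathcal{F}_H(\Omega_+,\Omega_-\cap H)$, whence $\int_H f \, \mathrm{d}\lambda_H \le \mathcal{C}_H(\Omega_+,\Omega_-\cap H)$. Combining these and taking the supremum over $f$ completes the proof. The same two lemmas show $\mathcal{F}_G$ and $\mathcal{F}_H$ are simultaneously empty or non-empty, so the convention assigning value $0$ to an empty class is consistent with the equality (covering also the degenerate cases, and the possibility that both values are $+\infty$). I expect the only mild obstacle to be the measure/sign bookkeeping—chiefly the observation that the exterior integral $\int_{G\setminus H} f \, \mathrm{d}\lambda_G$ is non-positive—rather than any deeper difficulty; the symmetry of $\Omega_\pm$ is compatible with the evenness of real positive definite functions but is not actually invoked in this argument.
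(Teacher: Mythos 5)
Your proposal is correct and follows essentially the same route as the paper: Lemma \ref{sigma-compact-lemma} to produce $H \supset \Omega_{+}$, the sign observation that $f \le 0$ off $H$ (from $f^{-1}(0,\infty) \subset \Omega_{+} \subset H$) combined with Lemma \ref{restrictionlemma} for one inequality, and Lemma \ref{extensionlemma} with trivial extensions for the other. The only difference is presentational --- you argue with arbitrary members of the classes and take suprema, whereas the paper runs the same argument along extremal sequences within $1/n$ of the suprema; the mathematical content is identical.
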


\begin{proof} On account of Lemma \ref{sigma-compact-lemma}, let $H$ be an open $\sigma$-compact subgroup of $G$ containing $\Omega_{+}$. Equip $H$ with Haar measure $\lambda_{H} := \lambda_{G}|_{H}$, the restriction of $\lambda_{G}$ to $H$.

Let $(f_{n})_{n \in \mathbb{N}}$ in $\mathcal{F}_{G}(\Omega_{+}, \Omega_{-})$ be a $\mathcal{C}_{G}(\Omega_{+}, \Omega_{-})$-extremal sequence chosen in such a way that
	\begin{equation*}\label{extremalGsequence}
	\int_{G}f_{n}\mbox{d}\lambda_{G} \ge \mathcal{C}_{G}(\Omega_{+}, \Omega_{-}) - \frac{1}{n} \textup{ for all } n \in \mathbb{N}.
	\end{equation*}
Let $f_{n}|_{H}: H \to \mathbb{R}$ denote the restriction of $f_{n}$ to $H$. By Lemma \ref{restrictionlemma}, $f_{n}|_{H}$ is in $\mathcal{F}_{G}(\Omega_{+}, \Omega_{-}\cap H)$ for all $n \in \mathbb{N}$. For all $n \in \mathbb{N}$, we have
	\begin{equation}\label{splitintegral}
	\int_{G}f_{n}\mbox{d}\lambda_{G} = \int_{H}f_{n}\mbox{d}\lambda_{G} + \int_{G\symbol{92}H}f_{n}\mbox{d}\lambda_{G}.
	\end{equation}
Since $f_{n}^{-1}(0, \infty)$ is contained in $H$, we have that $f_{n}(g) \le 0$ for all $g \in G \symbol{92}H$. Therefore,
	\begin{equation}\label{negint}
	\int_{G\symbol{92}H}f_{n}\mbox{d}\lambda_{G} \le 0.
	\end{equation}
From (\ref{splitintegral}) and (\ref{negint}), it follows that
	\begin{equation}\label{ineq}
	\int_{H}f_{n}|_{H}\mbox{d}\lambda_{H} = \int_{H}f_{n}\mbox{d}\lambda_{G} \ge \int_{G}f_{n}\mbox{d}\lambda _{G}.
	\end{equation}
Now, by the definition of $\mathcal{C}_{H}(\Omega_{+}, \Omega_{-} \cap H)$ and inequality (\ref{ineq}), we have
	\begin{equation*}
	\mathcal{C}_{H}(\Omega_{+}, \Omega_{-} \cap H) \ge \int_{H}f_{n}|_{H}\textup{d}\lambda_{H} \ge \int_{G}f_{n}\textup{d}\lambda_{G} \ge \mathcal{C}_{G}(\Omega_{+}, \Omega_{-}) - \frac{1}{n}.
	\end{equation*}
Thus $\mathcal{C}_{H}(\Omega_{+}, \Omega_{-} \cap H) \ge \mathcal{C}_{G}(\Omega_{+}, \Omega_{-})$. For the other inequality, let $(h_{n})_{n \in \mathbb{N}}$ in  $\mathcal{F}_{H}(\Omega_{+}, \Omega_{-} \cap H)$ be a $\mathcal{C}_{H}(\Omega_{+}, \Omega_{-} \cap H)$-extremal sequence such that
	\begin{equation*}
	\int_{H}h_{n}\mbox{d}\lambda_{H} \ge \mathcal{C}_{H}(\Omega_{+} , \Omega_{-} \cap H)-\frac{1}{n} \textup{ for all } n \in \mathbb{N}.
	\end{equation*}
By Lemma \ref{extensionlemma}, the sequence $(\widetilde{h_{n}})_{n \in \mathbb{N}}$ of trivial extensions is in $\mathcal{F}_{G}(\Omega_{+}, \Omega_{-} \cap H) \subset ~ \mathcal{F}_{G}(\Omega_{+}, \Omega_{-})$, and since $\widetilde{h_{n}}$ vanishes outside of $H$, we have that for all $n \in \mathbb{N}$,
	\begin{equation*}
	\mathcal{C}_{G}(\Omega_{+}, \Omega_{-}) \ge \int_{G}\widetilde{h_{n}}\textup{d}\lambda_{G} = \int_{H}h_{n}\textup{d}\lambda_{H} \ge \mathcal{C}_{H}(\Omega_{+}, \Omega_{-} \cap H) - \frac{1}{n}.
	\end{equation*}
Thus, $\mathcal{C}_{G}(\Omega_{+}, \Omega_{-}) \ge \mathcal{C}_{H}(\Omega_{+}, \Omega_{-} \cap H)$, and hence
	\begin{equation*}
	\mathcal{C}_{G}(\Omega_{+}, \Omega_{-}) = \mathcal{C}_{H}(\Omega_{+}, \Omega_{-} \cap H),
	\end{equation*}
as required.

\end{proof}

As a corollary of Theorem \ref{reduction}, we obtain that the Delsarte and Tur\'{a}n problems on any LCA group may be reduced to some open $\sigma$-compact subgroup.

\begin{corollary}[Delsarte problem]
	Let $G$ be a LCA group and $\Omega \subset G$ a $\sigma$-finite symmetric neighbourhood of $0$. Then there is an open $\sigma$-compact subgroup $H$ of $G$ containing $\Omega$ such that
	\begin{equation*}
	\mathcal{D}_{G}(\Omega) = \mathcal{D}_{H}(\Omega).
	\end{equation*}
\end{corollary}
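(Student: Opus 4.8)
The plan is to deduce the Delsarte corollary directly from Theorem~\ref{reduction} by choosing the parameters that specialise the general extremal problem \eqref{extremalproblem} to the Delsarte problem \eqref{delsarte'}. Recall that by definition $\mathcal{D}_{G}(\Omega) = \mathcal{C}_{G}(\Omega, G)$, so the strategy is simply to apply Theorem~\ref{reduction} with $\Omega_{+} = \Omega$ and $\Omega_{-} = G$.

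First I would check that the hypotheses of Theorem~\ref{reduction} are met for this choice. The set $\Omega_{+} = \Omega$ is, by assumption, a $\sigma$-finite symmetric neighbourhood of $0$, exactly as required. The set $\Omega_{-} = G$ is trivially symmetric since $G$ is a group. Thus Theorem~\ref{reduction} applies and yields an open $\sigma$-compact subgroup $H$ of $G$ containing $\Omega$ such that
\begin{equation*}
\mathcal{C}_{G}(\Omega, G) = \mathcal{C}_{H}(\Omega, G \cap H).
\end{equation*}

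Next I would simplify the right-hand side. Since $H$ is a subgroup of $G$, we have $G \cap H = H$, so $\mathcal{C}_{H}(\Omega, G \cap H) = \mathcal{C}_{H}(\Omega, H)$. By the definition of the Delsarte problem \eqref{delsarte'} applied on the group $H$, namely $\mathcal{D}_{H}(\Omega) = \mathcal{C}_{H}(\Omega, H)$, this is precisely $\mathcal{D}_{H}(\Omega)$. Combining with $\mathcal{C}_{G}(\Omega, G) = \mathcal{D}_{G}(\Omega)$ gives the asserted equality $\mathcal{D}_{G}(\Omega) = \mathcal{D}_{H}(\Omega)$.

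I do not expect any substantial obstacle here, as the corollary is essentially a direct specialisation of Theorem~\ref{reduction}. The only point deserving a moment's care is the bookkeeping identification $\Omega_{-} \cap H = G \cap H = H$, which ensures that the relative Delsarte condition on $H$ (negative part supported in all of $H$, i.e.\ no constraint) coincides with the unrestricted Delsarte problem on $H$; this matches the observation in the text that taking $\Omega_{-}$ to be the whole group recovers the Delsarte problem. One should also note that the Haar measure on $H$ is taken to be $\lambda_{H} = \lambda_{G}|_{H}$, consistent with the normalisation used in the proof of Theorem~\ref{reduction}.
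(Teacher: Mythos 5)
Your proposal is correct and matches the paper's own treatment: the corollary is stated immediately after Theorem~\ref{reduction} as a direct specialisation with $\Omega_{+} = \Omega$ and $\Omega_{-} = G$, exactly as you do, with the identification $G \cap H = H$ turning $\mathcal{C}_{H}(\Omega, G \cap H)$ into $\mathcal{D}_{H}(\Omega)$. Nothing is missing.
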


\begin{corollary}[Tur\'{a}n problem]
	Let $G$ be a LCA group and $\Omega \subset G$ a $\sigma$-finite symmetric neighbourhood of $0$. Then there is an open $\sigma$-compact subgroup $H$ of $G$ containing $\Omega$ such that
	\begin{equation*}
	\mathcal{T}_{G}(\Omega) = \mathcal{T}_{H}(\Omega).
	\end{equation*}
\end{corollary}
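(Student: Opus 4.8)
The plan is to deduce this directly from Theorem \ref{reduction} by specialising to the Tur\'an configuration $\Omega_{+} = \Omega_{-} = \Omega$, so that by definition $\mathcal{T}_{G}(\Omega) = \mathcal{C}_{G}(\Omega, \Omega)$. First I would check that the hypotheses of Theorem \ref{reduction} are met in this special case: we are given that $\Omega$ is symmetric and is a $\sigma$-finite neighbourhood of $0$, so both $\Omega_{+} = \Omega$ and $\Omega_{-} = \Omega$ are symmetric, and $\Omega_{+} = \Omega$ is a $\sigma$-finite neighbourhood of $0$, exactly as required.

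Having verified the hypotheses, I would invoke Theorem \ref{reduction} to produce an open $\sigma$-compact subgroup $H$ of $G$ with $\Omega \subset H$ and
\begin{equation*}
\mathcal{C}_{G}(\Omega, \Omega) = \mathcal{C}_{H}(\Omega, \Omega \cap H).
\end{equation*}
The only remaining point is to simplify the right-hand side. Since $H$ contains $\Omega$, we have $\Omega \cap H = \Omega$, whence $\mathcal{C}_{H}(\Omega, \Omega \cap H) = \mathcal{C}_{H}(\Omega, \Omega) = \mathcal{T}_{H}(\Omega)$. Combining this with the displayed equality and the identity $\mathcal{T}_{G}(\Omega) = \mathcal{C}_{G}(\Omega, \Omega)$ gives $\mathcal{T}_{G}(\Omega) = \mathcal{T}_{H}(\Omega)$, as claimed.

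I do not expect any genuine obstacle here: the statement is a pure specialisation of Theorem \ref{reduction}, and the proof reduces to matching notation and observing the trivial intersection simplification $\Omega \cap H = \Omega$. The analogous Delsarte corollary follows in the same way, taking $\Omega_{-} = G$ (which is symmetric) and using $G \cap H = H$ together with $\mathcal{D}_{H}(\Omega) = \mathcal{C}_{H}(\Omega, H)$.
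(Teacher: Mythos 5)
Your proof is correct and matches the paper's intent exactly: the paper states this corollary without proof as an immediate specialisation of Theorem \ref{reduction} with $\Omega_{+} = \Omega_{-} = \Omega$, which is precisely your argument, including the simplification $\Omega \cap H = \Omega$ since $H \supset \Omega$. Your closing remark on the Delsarte case (using $G \cap H = H$) is also the intended reading of the companion corollary.
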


In Theorem \ref{reduction} it is not assumed that $\Omega_{-}$ is $\sigma$-finite. So, $\Omega_{-}$ may not lie in an open $\sigma$-compact subgroup of $G$. This is the reason for taking the intersection $\Omega_{-} \cap H$ in Theorem \ref{reduction}. However, if both $\Omega_{+}$ and $\Omega_{-}$ are $\sigma$-finite, then so is their union $\Omega_{+} \cup \Omega_{-}$. Hence the latter is contained in an open $\sigma$-compact subgroup by Lemma \ref{sigma-compact-lemma}. In this case, it is easy to modify the proof of Theorem \ref{reduction}, by taking $H$ to contain not just $\Omega_{+}$ but the union $\Omega_{+} \cup \Omega_{-}$ to obtain the following theorem.

\begin{theorem}\label{reduction2}
	Let $G$ be a LCA group, and $\Omega_{+}$ and $\Omega_{-}$ be $\sigma$-finite symmetric subsets of $G$ with $\Omega_{+}$ a neighbourhood of $0$. Then  there is an open $\sigma$-compact subgroup $H$ of $G$ containing $\Omega_{+}$ and $\Omega_{-}$ such that
	
	\begin{equation*}
	\mathcal{C}_{G}(\Omega_{+}, \Omega_{-}) = \mathcal{C}_{H}(\Omega_{+}, \Omega_{-}).
	\end{equation*}
\end{theorem}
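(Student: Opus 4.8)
The plan is to follow the proof of Theorem \ref{reduction} almost verbatim, the only genuinely new ingredient being the choice of the ambient subgroup $H$. Since $\Omega_{+}$ and $\Omega_{-}$ are both $\sigma$-finite, and hence Haar measurable, their union $\Omega_{+} \cup \Omega_{-}$ is again $\sigma$-finite. By Lemma \ref{sigma-compact-lemma}, $\Omega_{+} \cup \Omega_{-}$ is therefore contained in an open $\sigma$-compact subgroup $H$ of $G$; in particular $H$ contains both $\Omega_{+}$ and $\Omega_{-}$. The crucial consequence of this enlarged choice is that $\Omega_{-} \cap H = \Omega_{-}$, which is precisely what lets us replace every occurrence of $\mathcal{C}_{H}(\Omega_{+}, \Omega_{-} \cap H)$ in the proof of Theorem \ref{reduction} by $\mathcal{C}_{H}(\Omega_{+}, \Omega_{-})$. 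As before, I would equip $H$ with the restricted Haar measure $\lambda_{H} := \lambda_{G}|_{H}$.

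For the inequality $\mathcal{C}_{H}(\Omega_{+}, \Omega_{-}) \ge \mathcal{C}_{G}(\Omega_{+}, \Omega_{-})$, I would take a $\mathcal{C}_{G}(\Omega_{+}, \Omega_{-})$-extremal sequence $(f_{n})$ and restrict it to $H$. Since $\Omega_{+} \subset H$, Lemma \ref{restrictionlemma} applies and yields $f_{n}|_{H} \in \mathcal{F}_{H}(\Omega_{+}, \Omega_{-} \cap H) = \mathcal{F}_{H}(\Omega_{+}, \Omega_{-})$. The key estimate is unchanged: because $f_{n}^{-1}(0,\infty) \subset \Omega_{+} \subset H$, the function $f_{n}$ is non-positive on $G \setminus H$, so splitting $\int_{G} f_{n} \, \mathrm{d}\lambda_{G}$ over $H$ and $G \setminus H$ gives $\int_{H} f_{n}|_{H} \, \mathrm{d}\lambda_{H} \ge \int_{G} f_{n} \, \mathrm{d}\lambda_{G} \ge \mathcal{C}_{G}(\Omega_{+}, \Omega_{-}) - \frac{1}{n}$, and letting $n \to \infty$ gives the claim.

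For the reverse inequality, I would take a $\mathcal{C}_{H}(\Omega_{+}, \Omega_{-})$-extremal sequence $(h_{n})$ in $\mathcal{F}_{H}(\Omega_{+}, \Omega_{-})$. Since $\Omega_{+} \cap H = \Omega_{+}$ and $\Omega_{-} \cap H = \Omega_{-}$, each $h_{n}$ lies in $\mathcal{F}_{H}(\Omega_{+} \cap H, \Omega_{-} \cap H)$, so Lemma \ref{extensionlemma} produces trivial extensions $\widetilde{h_{n}} \in \mathcal{F}_{G}(\Omega_{+}, \Omega_{-})$. As $\widetilde{h_{n}}$ vanishes off $H$, we have $\int_{G} \widetilde{h_{n}} \, \mathrm{d}\lambda_{G} = \int_{H} h_{n} \, \mathrm{d}\lambda_{H}$, whence $\mathcal{C}_{G}(\Omega_{+}, \Omega_{-}) \ge \int_{H} h_{n} \, \mathrm{d}\lambda_{H} \ge \mathcal{C}_{H}(\Omega_{+}, \Omega_{-}) - \frac{1}{n}$. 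Combining the two inequalities yields the desired equality.

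I do not anticipate a genuine obstacle here: the argument is a direct adaptation of Theorem \ref{reduction}, and the only point requiring attention is that the enlarged choice of $H$ forces $\Omega_{-} \cap H = \Omega_{-}$, so that the applications of Lemmas \ref{restrictionlemma} and \ref{extensionlemma} land in the function classes $\mathcal{F}_{H}(\Omega_{+}, \Omega_{-})$ and $\mathcal{F}_{G}(\Omega_{+}, \Omega_{-})$ with no residual intersection. The one mild subtlety worth \emph{double-checking} is simply that $\Omega_{+} \cup \Omega_{-}$ is Haar measurable and $\sigma$-finite, so that Lemma \ref{sigma-compact-lemma} is indeed applicable to it.
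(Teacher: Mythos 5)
Your proposal is correct and is exactly the modification the paper itself makes: the paper's (sketched) proof of Theorem \ref{reduction2} likewise notes that $\Omega_{+} \cup \Omega_{-}$ is $\sigma$-finite, applies Lemma \ref{sigma-compact-lemma} to enclose this union in an open $\sigma$-compact subgroup $H$ (so that $\Omega_{-} \cap H = \Omega_{-}$), and then reruns the argument of Theorem \ref{reduction} verbatim. Your spelled-out restriction/extension steps via Lemmas \ref{restrictionlemma} and \ref{extensionlemma} match the paper's intended argument in every detail.
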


The next theorem shows that for the sake of computing the extremal values, we may as well only consider $\sigma$-compact groups, without any assuming that $\Omega_{+}$ and $\Omega_{-}$ are $\sigma$-compact. This is made possible by the fact that the support of a Haar integrable function can not be too large, in the sense expressed by the following lemma.

\begin{lemma}[{\cite[Corollary 1.3.6]{deitmar}}]\label{supportlemma} Let $G$ be a LCA group. For any $f \in L^1(G)$ the support of $f$ is contained in an open $\sigma$-compact subgroup.
\end{lemma}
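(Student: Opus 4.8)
The plan is to reduce the statement to Lemma~\ref{sigma-compact-lemma} by showing that the set on which $f$ does not vanish is $\sigma$-finite, and then to exploit the fact that open subgroups of topological groups are automatically closed.

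First I would decompose the non-vanishing set of $f$ into level sets. For each $n \in \mathbb{N}$ put $E_n := \{ g \in G : |f(g)| > 1/n \}$. Since $f \in L^1(G)$, Markov's inequality yields
\begin{equation*}
\lambda_G(E_n) \le n \int_G |f| \, \mathrm{d}\lambda_G < \infty,
\end{equation*}
so each $E_n$ is a Haar measurable set of finite measure. As $\{ g \in G : f(g) \neq 0 \} = \bigcup_{n \in \mathbb{N}} E_n$, the non-vanishing set of $f$ is a countable union of finite-measure measurable sets, hence Haar measurable and $\sigma$-finite.

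Next I would apply Lemma~\ref{sigma-compact-lemma} to this $\sigma$-finite measurable set, obtaining an open $\sigma$-compact subgroup $H$ of $G$ with $\{ g \in G : f(g) \neq 0 \} \subset H$. Finally, recall that an open subgroup of a topological group is closed, its complement being a union of the remaining (open) cosets. Consequently $H$ is closed, and therefore
\begin{equation*}
\supp f = \overline{\{ g \in G : f(g) \neq 0 \}} \subset \overline{H} = H,
\end{equation*}
which is the desired conclusion.

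The argument is essentially routine and I do not anticipate a genuine obstacle; the only point meriting care is that the pointwise support depends on the chosen representative of $f$. Working with a fixed Haar measurable representative (as above), the set $\{ g \in G : f(g) \neq 0 \}$ is $\sigma$-finite and the conclusion holds verbatim; if one instead prefers the essential support, the same level-set estimate on the well-defined classes $\{ |f| > 1/n \}$ delivers the result unchanged.
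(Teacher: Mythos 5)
Your proof is correct. Note, however, that the paper itself offers no argument for this lemma at all: it is stated as a quotation of \cite[Corollary 1.3.6]{deitmar}, so your derivation differs from the paper simply by being a proof, and a self-contained one built from the paper's own toolkit. The route is sound: the level-set decomposition $\{f \neq 0\} = \bigcup_{n} \{|f| > 1/n\}$ together with Markov's inequality shows the non-vanishing set of a fixed measurable representative is Haar measurable and $\sigma$-finite; Lemma~\ref{sigma-compact-lemma} then places it inside an open $\sigma$-compact subgroup $H$; and since an open subgroup is closed (its complement being a union of open cosets), the closure $\supp f = \overline{\{f \neq 0\}}$ stays inside $H$. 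Two points deserve emphasis, and you handled both. First, there is no circularity in invoking Lemma~\ref{sigma-compact-lemma}: it appears earlier in the paper and its proof relies only on Lemma~\ref{sigmacompactgeneration} and regularity of the Haar measure, not on the present lemma. Second, your caveat about representative-dependence is exactly the right one: the pointwise support depends on the chosen representative of the class $f \in L^1(G)$, but the argument applies verbatim to any fixed measurable representative, and in the paper's subsequent use of the lemma (applied to the continuous functions $f_n$) the representative is canonical. What your approach buys is self-containedness --- the reader need not consult \cite{deitmar} --- at the cost of a few lines; what the paper's citation buys is brevity.
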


\begin{theorem} Let $G$ be a LCA group, and $\Omega_{+}$ and $\Omega_{-}$ be symmetric subsets of $G$ with $\Omega_{+}$ a neighbourhood of $0$. Then there is an open $\sigma$-compact subgroup $H$ of $G$ such that
	\begin{equation*}
	\mathcal{C}_{G}(\Omega_{+}, \Omega_{-}) = \mathcal{C}_{H}(\Omega_{+} \cap H, \Omega_{-} \cap H).
	\end{equation*}
	
\end{theorem}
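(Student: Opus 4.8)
The plan is to prove the asserted equality by establishing the two inequalities $\mathcal{C}_H(\Omega_+\cap H,\Omega_-\cap H)\ge \mathcal{C}_G(\Omega_+,\Omega_-)$ and $\mathcal{C}_G(\Omega_+,\Omega_-)\ge \mathcal{C}_H(\Omega_+\cap H,\Omega_-\cap H)$ for a suitably chosen $H$, closely following the proof of Theorem \ref{reduction}. The one genuinely new ingredient is that $\Omega_+$ is now \emph{not} assumed to be $\sigma$-finite, so—unlike in Theorem \ref{reduction}—one can no longer place $\Omega_+$ itself inside an open $\sigma$-compact subgroup. Instead I would exploit Lemma \ref{supportlemma}: every $f$ in the relevant class lies in $L^1(G)$, hence its support sits inside some open $\sigma$-compact subgroup. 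Accordingly, rather than fixing $H$ in advance from $\Omega_+$, I would build $H$ out of the supports of an extremal sequence.

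First I would fix a $\mathcal{C}_G(\Omega_+,\Omega_-)$-extremal sequence $(f_n)_{n\in\mathbb{N}}$ with $\int_G f_n\,\mathrm{d}\lambda_G\ge \mathcal{C}_G(\Omega_+,\Omega_-)-\tfrac1n$. For each $n$, Lemma \ref{supportlemma} provides an open $\sigma$-compact subgroup $H_n\supset \supp f_n$; let $H$ be the subgroup generated by $\bigcup_{n}H_n$. By Lemma \ref{sigmacompactgeneration} this $H$ is $\sigma$-compact, and it is open because each $H_n$ is (so at least one generator has non-empty interior). By construction $\supp f_n\subset H$ for all $n$, whence both $f_n^{-1}(0,\infty)$ and $f_n^{-1}(-\infty,0)$ lie in $H$; in particular $f_n^{-1}(0,\infty)\subset \Omega_+\cap H$, which shows $f_n\in\mathcal{F}_G(\Omega_+\cap H,\Omega_-)$. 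Since $\Omega_+\cap H\subset H$, Lemma \ref{restrictionlemma} then yields $f_n|_H\in\mathcal{F}_H(\Omega_+\cap H,\Omega_-\cap H)$. As $f_n$ vanishes off $H$ and $\lambda_H=\lambda_G|_H$, we get $\int_H f_n|_H\,\mathrm{d}\lambda_H=\int_G f_n\,\mathrm{d}\lambda_G$, so that $\mathcal{C}_H(\Omega_+\cap H,\Omega_-\cap H)\ge \mathcal{C}_G(\Omega_+,\Omega_-)-\tfrac1n$; letting $n\to\infty$ gives the first inequality.

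The reverse inequality is essentially identical to the corresponding step of Theorem \ref{reduction}: starting from a $\mathcal{C}_H(\Omega_+\cap H,\Omega_-\cap H)$-extremal sequence $(h_n)$, Lemma \ref{extensionlemma} shows each trivial extension $\widetilde{h_n}$ lies in $\mathcal{F}_G(\Omega_+,\Omega_-)$ with $\int_G\widetilde{h_n}\,\mathrm{d}\lambda_G=\int_H h_n\,\mathrm{d}\lambda_H$, whence $\mathcal{C}_G(\Omega_+,\Omega_-)\ge \mathcal{C}_H(\Omega_+\cap H,\Omega_-\cap H)$. Combining the two inequalities yields the claim.

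I expect the main obstacle to be conceptual rather than computational: recognising that the role played by the $\sigma$-finiteness of $\Omega_+$ in Theorem \ref{reduction} can be replaced by Lemma \ref{supportlemma} applied to the members of an extremal sequence, so that $H$ must be chosen \emph{after} the sequence rather than before. The bookkeeping points to watch are that the restriction lemma requires the positive-part set to be contained in $H$—hence the intersection $\Omega_+\cap H$ and the intermediate observation $f_n\in\mathcal{F}_G(\Omega_+\cap H,\Omega_-)$—and that the openness of $H$ (needed for the restriction and extension lemmas, and for $\lambda_G|_H$ to be a Haar measure on $H$) is automatic since the $H_n$ are open. Finally, I would dispose of the degenerate case $\mathcal{F}_G(\Omega_+,\Omega_-)=\emptyset$ separately: here one picks any open $\sigma$-compact subgroup $H$, for instance the one generated by a compact neighbourhood of $0$, and notes via Lemma \ref{extensionlemma} that $\mathcal{F}_H(\Omega_+\cap H,\Omega_-\cap H)$ is then empty as well, so that both extremal values equal $0$ by convention.
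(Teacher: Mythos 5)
Your proposal is correct and follows essentially the same route as the paper's own proof: both construct $H$ by applying Lemma \ref{supportlemma} to a $\mathcal{C}_{G}(\Omega_{+},\Omega_{-})$-extremal sequence and Lemma \ref{sigmacompactgeneration} to the resulting subgroups $H_n$, then obtain the two inequalities via the restriction lemma (Lemma \ref{restrictionlemma}) and the trivial-extension lemma (Lemma \ref{extensionlemma}). If anything, you are slightly more careful than the paper, since you make explicit the intermediate observation $f_n \in \mathcal{F}_{G}(\Omega_{+}\cap H, \Omega_{-})$ needed to apply Lemma \ref{restrictionlemma} (whose hypothesis requires the positive set to lie in $H$), and you also handle the degenerate case $\mathcal{F}_{G}(\Omega_{+},\Omega_{-})=\emptyset$.
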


\begin{proof}
	
Using Lemma \ref{extensionlemma}, the inequality $\mathcal{C}_{H}(\Omega_{+} \cap H, \Omega_{-} \cap H) \le \mathcal{C}_{G}(\Omega_{+}, \Omega_{-})$ is immediate. So, it remains to show the reverse inequality. Let $(f_{n})_{n \in \mathbb{N}}$ in $\mathcal{F}_{G}(\Omega_{+}, \Omega_{-})$ be a $\mathcal{C}_{G}(\Omega_{+}, \Omega_{-})$-extremal sequence chosen in such a way that
	\begin{equation*}
	\int_{G}f_{n}\mbox{d}\lambda_{G} \ge \mathcal{C}_{G}(\Omega_{+}, \Omega_{-}) - \frac{1}{n} \textup{ for all } n \in \mathbb{N}.
	\end{equation*}
By Lemma \ref{supportlemma}, for each $n \in \mathbb{N}$, there is an open $\sigma$-compact subgroup $H_{n}$ of $G$ such that $\supp f_{n} \subset H_{n}$. Let $H$ be the subgroup of $G$ generated by the union of all the $H_{n}$'s as $n$ runs through $\mathbb{N}$. By Lemma \ref{sigmacompactgeneration}, $H$ is an open $\sigma$-compact subgroup of $G$. Equip $H$ with Haar measure $\lambda_{H} := \lambda_{G}|_{H}$.
	
Observe that for all $n \in \mathbb{N}$, $f_{n}(g) = 0$ for all $g \in G \symbol{92} H$. Let $f_{n}|_{H}: H \to \mathbb{R}$ be the restriction of $f_{n}$ to the subgroup $H$. By Lemma~\ref{restrictionlemma} $f_{n}|_{H} \in \mathcal{F}_{H}(\Omega_{+} \cap H, \Omega_{-} \cap H)$. We have
	\begin{equation*}
	\mathcal{C}_{H}(\Omega_{+} \cap H, \Omega_{-} \cap H) \ge \int_{H}f_{n}|_{H}\mbox{d}\lambda_{H} = \int_{G}f_{n}\mbox{d}\lambda_{G} \ge \mathcal{C}_{G}(\Omega_{+}, \Omega_{-}) - \frac{1}{n}.
	\end{equation*}
Taking limits furnishes  $\mathcal{C}_{H}(\Omega_{+} \cap H, \Omega_{-} \cap H) =  \mathcal{C}_{G}(\Omega_{+}, \Omega_{-}).$
	
\end{proof}

\section{Existence of Extremal Functions}

In this section, we show the existence of an extremal function for the extremal problem \eqref{extremalproblem} for boundary-coherent sets. We first obtain existence in the context of $\sigma$-compact groups and later extend it to the general case. We start with a few lemmas.

\begin{lemma}[Approximation of Unity, {\cite[Lemma 2]{kolrev}}] \label{approx_unity}

Let $C$ be an arbitrary compact subset of a LCA group $G$ and $\ee > 0$ be an arbitrary real number. Then there exists a continuous compactly supported positive definite function $k: G \to \mathbb{R}$ such that $k(0)=1$, $0\le k \le 1$, and $k|_{C} > 1-\ee$.
	
\end{lemma}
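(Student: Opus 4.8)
The plan is to produce $k$ explicitly as a normalized self-convolution of the indicator of a compact set $F$ that is almost invariant under translation by the elements of $C$, and then to choose $F$ via amenability.

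First I would fix a compact set $F \subset G$ with $0 < \lambda_G(F) < \infty$ (to be specified below) and set
\[
k := \frac{1}{\lambda_G(F)}\,\mathbf{1}_F \ast \Star{\mathbf{1}_F}, \qquad \text{so that} \qquad k(x) = \frac{\lambda_G\big(F \cap (x+F)\big)}{\lambda_G(F)},
\]
where $\Star{\mathbf{1}_F} = \mathbf{1}_{-F}$ and the closed form for $k(x)$ is a direct computation. From this formula every required property except the lower bound on $C$ is immediate. The function $k$ is real-valued and even; it is continuous because the convolution of two $L^2$-functions is continuous (using continuity of translation in $L^2(G)$, as $\mathbf{1}_F \in L^2(G)$); it is positive definite, being of the form $h \ast \Star{h}$ with $h = \lambda_G(F)^{-1/2}\mathbf{1}_F$, so that $\sum_{i,j} c_i \overline{c_j}\, k(g_i - g_j) = \big\|\sum_i \overline{c_i}\,\tau_{g_i} h\big\|_{2}^{2} \ge 0$ (with $\tau_g$ denoting translation by $g$); and $k(0)=1$ with $0 \le k \le 1$, since $0 \le \lambda_G(F \cap (x+F)) \le \lambda_G(F)$. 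Finally $\supp k \subset F - F$, which is compact, so $k$ is compactly supported.

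Next I would reduce the remaining requirement $k|_C > 1 - \ee$ to an almost-invariance condition on $F$. Since
\[
1 - k(x) = \frac{\lambda_G\big(F \setminus (x+F)\big)}{\lambda_G(F)} \le \frac{\lambda_G\big(F \,\triangle\, (x+F)\big)}{\lambda_G(F)},
\]
it suffices to choose $F$ so that $\lambda_G\big(F \triangle (x+F)\big) < \ee\,\lambda_G(F)$ for every $x \in C$. Such an $F$ exists because every LCA group is amenable and hence satisfies the F{\o}lner condition: for the compact set $C$ and tolerance $\ee$ there is a compact set $F$ of positive measure that is $(\ee, C)$-invariant in precisely this sense. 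Feeding this $F$ into the construction above delivers $k|_C > 1 - \ee$ and finishes the proof.

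I expect the only genuine difficulty to be obtaining the F{\o}lner estimate \emph{uniformly} over the whole compact set $C$; everything else is routine verification. If one prefers not to invoke amenability as a black box, the structure theorem $G \cong \mathbb{R}^a \times H$, with $H$ containing a compact open subgroup, lets one build $F$ by hand — a large cube in the $\mathbb{R}^a$ factor, whose self-convolution is a product of Fej\'er-type kernels $\prod_i (1 - |t_i|/T)_+$, together with a large finite union of cosets of a compact open subgroup in $H$ — but this merely reconstructs the F{\o}lner estimate factor by factor, so I would present the amenability argument as the cleaner route.
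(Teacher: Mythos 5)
Your construction is correct, but note that the paper itself gives no proof of this lemma: it is imported verbatim from \cite[Lemma 2]{kolrev}, and the argument there is precisely the structure-theoretic one you relegate to your final remark. In that proof one takes an open subgroup $G_{0} \cong \mathbb{R}^{a} \times K$ ($K$ compact) from the structure theorem, covers the compact set $C$ by finitely many cosets of $G_{0}$, passes to the open compactly generated subgroup $H \cong \mathbb{R}^{a} \times \mathbb{Z}^{m} \times K'$ generated by $G_{0}$ and those cosets, builds $k$ on $H$ as a product of Fej\'er-type kernels $\prod_{i}(1-|t_{i}|/T)_{+}\cdot\prod_{j}(1-|n_{j}|/N)_{+}\cdot \mathbf{1}_{K'}$ with $T,N$ large, and finally extends by zero to $G$, using that the trivial extension of a continuous positive definite function on an open subgroup is positive definite on the whole group. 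Your route via amenability is genuinely different and, as far as I can see, complete: the closed form $k(x)=\lambda_{G}\bigl(F\cap(x+F)\bigr)/\lambda_{G}(F)$ and all of its properties (continuity, positive definiteness, $k(0)=1$, $0\le k\le 1$, $\supp k \subset F-F$) are verified correctly, and the one nontrivial input --- a compact $F$ with $\lambda_{G}\bigl(F \,\triangle\, (x+F)\bigr) < \ve\,\lambda_{G}(F)$ uniformly for $x \in C$ --- is exactly the F{\o}lner characterization of amenability for locally compact groups (Emerson--Greenleaf), which applies because abelian groups are amenable; inner regularity then upgrades the Borel F{\o}lner set to a compact one, as you would need to say explicitly. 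The trade-off: your argument is shorter and treats all LCA groups at once, but leans on the nontrivial equivalence of amenability with the uniform F{\o}lner condition as a black box, whereas the cited proof is elementary and explicit (the kernel is visibly a product of one-dimensional Fej\'er kernels) at the price of invoking the structure theorem.
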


\begin{lemma}[Mazur's lemma, {\cite[Corollary 3.8, Exercise 3.4]{brezis}}] \label{mazur} Let $E$ be a Banach space and let $(x_{n})_{n \in \mathbb{N}}$ be a sequence in $E$ converging to $x \in E$ weakly. Then there exists a sequence $(y_{n})_{n \in \mathbb{N}}$ in $E$ such that
	\begin{equation*}
	y_{n} \in \mathrm{conv}\left(\bigcup_{i=n}^{\infty}\{x_{i}\}\right) \mbox{ for all } n \in \mathbb{N}
	\end{equation*}
and $(y_n)_{n \in \mathbb{N}}$ converges strongly to $x$ in $E$, where for a subset $A \subset E$, $\mathrm{conv}(A)$ refers to the convex hull of $A$, and strong convergence refers to convergence with respect to the norm topology on $E$.
	
\end{lemma}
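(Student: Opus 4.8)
The plan is to deduce the statement from the one nontrivial fact of the theory, namely that for a \emph{convex} subset of a Banach space the closure in the norm topology coincides with the closure in the weak topology; equivalently, every norm-closed convex set is weakly closed. This is the standard consequence of the Hahn--Banach separation theorem: if a point lies outside a norm-closed convex set, a continuous linear functional strictly separates them, and the associated open half-space is a weak neighbourhood of the point disjoint from the set, so the point is not in the weak closure either.

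Granting this, the argument is short. First I would fix $n \in \mathbb{N}$ and set $A_{n} := \mathrm{conv}\left(\bigcup_{i=n}^{\infty}\{x_{i}\}\right)$, which is convex by construction. Since $(x_{i})_{i \in \mathbb{N}}$ converges weakly to $x$, so does its tail $(x_{i})_{i \ge n}$, and hence every weak neighbourhood of $x$ contains $x_{i}$ for all but finitely many $i \ge n$. Thus $x$ lies in the weak closure of $\{x_{i} : i \ge n\}$, and a fortiori in the weak closure of the larger set $A_{n}$. By the Hahn--Banach consequence above, the weak closure of $A_{n}$ equals its norm closure $\overline{A_{n}}$, so $x \in \overline{A_{n}}$.

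It then remains to extract the sequence. Because $x \in \overline{A_{n}}$ for each $n$, for every $n$ there is an element $y_{n} \in A_{n} = \mathrm{conv}\left(\bigcup_{i=n}^{\infty}\{x_{i}\}\right)$ with $\|y_{n} - x\| < 1/n$. Each such $y_{n}$ is by definition a finite convex combination of the $x_{i}$ with $i \ge n$, so the required membership condition holds, while $\|y_{n}-x\| \to 0$ gives strong convergence $y_{n} \to x$ in $E$, as desired.

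I expect the only genuine obstacle to be the first paragraph, the identification of weak and norm closure for convex sets; once it is invoked, the construction of $(y_{n})_{n \in \mathbb{N}}$ is a routine extraction. If one preferred not to quote that identification as a black box, the alternative would be to argue by separation directly: were $x \notin \overline{A_{n}}$, the Hahn--Banach theorem would produce $\varphi \in E^{*}$ and $\alpha \in \mathbb{R}$ with $\mathrm{Re}\,\varphi(a) \le \alpha < \mathrm{Re}\,\varphi(x)$ for all $a \in A_{n}$, hence for every $x_{i}$ with $i \ge n$; but $\varphi(x_{i}) \to \varphi(x)$ forces $\mathrm{Re}\,\varphi(x_{i}) > \alpha$ for large $i$, a contradiction. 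This variant yields $x \in \overline{A_{n}}$ directly and bypasses the general closure statement altogether.
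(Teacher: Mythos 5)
Your proof is correct, and it is essentially the argument behind the result the paper cites: the paper offers no proof of its own, merely referring to Brezis, and the proof there proceeds exactly as you do --- $x$ lies in the weak closure of the tail convex hull $A_{n}$, Mazur's theorem (norm closure $=$ weak closure for convex sets, via Hahn--Banach separation) upgrades this to membership in the norm closure, and one extracts $y_{n} \in A_{n}$ with $\|y_{n}-x\| < 1/n$. Your direct separation variant at the end, including the use of $\mathrm{Re}\,\varphi$ to cover complex scalars, is also sound and self-contained.
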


\begin{theorem}\label{sigmacompactcase}
	Let $G$ be a $\sigma$-compact LCA group, and $\Omega_{+}$ and $\Omega_{-}$ be boundary-coherent symmetric subsets of $G$ with $\Omega_{+}$ a neighbourhood of $0$ having finite Haar measure. Then there exists a $\mathcal{C}_{G}(\Omega_{+}, \Omega_{-})$-extremal function $f \in \mathcal{F}_{G}(\Omega_{+}, \Omega_{-})$ satisfying $\int_{G}f\textup{d}\lambda_{G} = \mathcal{C}_{G}(\Omega_{+}, \Omega_{-})$.
\end{theorem}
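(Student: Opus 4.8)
The plan is to take a $\mathcal{C}_{G}(\Omega_{+},\Omega_{-})$-extremal sequence $(f_n)$, extract a weak limit in $L^{2}(G)$, and show that a continuous representative of this limit lies in $\mathcal{F}_{G}(\Omega_{+},\Omega_{-})$ and attains the supremum. First I would record that $\mathcal{F}_{G}(\Omega_{+},\Omega_{-})$ is nonempty with $\mathcal{C}_{G}(\Omega_{+},\Omega_{-})>0$: choosing a symmetric open neighbourhood $V$ of $0$ with $\overline{V}+\overline{V}\subset\Int\Omega_{+}$, the normalised convolution square $\lambda_{G}(V)^{-1}\,\mathbf 1_{V}\ast\mathbf 1_{V}$ is a nonnegative, continuous, compactly supported positive definite function in $\mathcal{F}_{G}(\Omega_{+},\Omega_{-})$ with strictly positive integral. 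Next, since each $f_n$ is positive definite with $f_n(0)=1$ one has $|f_n|\le 1$; moreover $(f_n)_{+}$ is supported in $\Omega_{+}$, so $\int_{G}(f_n)_{+}\le\lambda_{G}(\Omega_{+})<\infty$, and as $\int_{G}f_n\ge\mathcal{C}_{G}-1$ the integrals $\int_{G}(f_n)_{-}$ are uniformly bounded. Hence $(f_n)$ is bounded in $L^{1}(G)$ and in $L^{\infty}(G)$, thus in $L^{2}(G)$ since $\|f_n\|_{2}^{2}\le\|f_n\|_{\infty}\|f_n\|_{1}$.

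Because $L^{2}(G)$ is reflexive, a subsequence of $(f_n)$ converges weakly to some $f\in L^{2}(G)$. Applying Mazur's lemma (Lemma \ref{mazur}) I obtain convex combinations $g_k$ of the tail $\{f_{n_j}:j\ge k\}$ converging to $f$ strongly in $L^{2}(G)$. Convexity of $\mathcal{F}_{G}(\Omega_{+},\Omega_{-})$, which is immediate from the defining sign conditions, gives $g_k\in\mathcal{F}_{G}(\Omega_{+},\Omega_{-})$ with $\int_{G}g_k\ge\mathcal{C}_{G}-1/k$ and $|g_k|\le 1$. Passing to a further subsequence I may assume $g_k\to f$ almost everywhere, whence $|f|\le 1$ a.e. Testing against $\Star h\ast h$ for $h\in C_{c}(G)$ and using $L^{2}$-convergence, the inequalities $\int_{G}(\Star h\ast h)g_k\ge 0$ pass to the limit, so $f$ is integrally positive definite; as $G$ is $\sigma$-compact, $f$ agrees almost everywhere with a continuous positive definite function, and I replace $f$ by this representative.

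For the support conditions, $g_k\le 0$ a.e. on $G\setminus\Omega_{+}$ yields $f\le 0$ a.e. on $G\setminus\Omega_{+}$, and I upgrade this to the pointwise statement $f^{-1}(0,\infty)\subset\Omega_{+}$ using continuity together with boundary-coherence. Indeed, if $f(x)>0$ with $x\notin\Omega_{+}$, then $x\in\ext\Omega_{+}$ or $x\in\partial\Omega_{+}$; in the first case continuity gives an open neighbourhood of $x$ contained in $G\setminus\overline{\Omega_{+}}$ on which $f>0$, while in the second case boundary-coherence $\partial\Omega_{+}\subset\overline{\ext\Omega_{+}}$ together with continuity produces a nonempty open subset of $\ext\Omega_{+}$ on which $f>0$; either way $f>0$ on a set of positive Haar measure disjoint from $\Omega_{+}$, contradicting $f\le 0$ a.e. there. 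The same argument applied to $\Omega_{-}$ gives $f^{-1}(-\infty,0)\subset\Omega_{-}$.

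Finally, since $(g_k)_{+}\to f_{+}$ a.e. and $(g_k)_{+}\le\mathbf 1_{\Omega_{+}}\in L^{1}(G)$, dominated convergence gives $\int_{G}(g_k)_{+}\to\int_{G}f_{+}$, whereas Fatou's lemma gives $\int_{G}f_{-}\le\liminf_{k}\int_{G}(g_k)_{-}$; combining these yields $\int_{G}f\ge\limsup_{k}\int_{G}g_k\ge\mathcal{C}_{G}$, and in particular $f\in L^{1}(G)$. As $\mathcal{C}_{G}>0$ this forces $f\not\equiv 0$, hence $f(0)>0$; then $f/f(0)$ lies in $\mathcal{F}_{G}(\Omega_{+},\Omega_{-})$, so $\int_{G}f\le f(0)\mathcal{C}_{G}\le\mathcal{C}_{G}$, and therefore $\int_{G}f=\mathcal{C}_{G}$ with $f(0)=1$. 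Thus $f$ is the desired extremal function. I expect the main analytic obstacle to be passing to the limit in the integral without losing mass, which is handled by the finiteness of $\lambda_{G}(\Omega_{+})$ on the positive side and by Fatou's lemma on the possibly infinite-measure negative side, while the conceptually decisive step is the boundary-coherence argument converting almost-everywhere sign information into the pointwise preimage conditions.
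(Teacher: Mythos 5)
Your proposal is correct and follows essentially the same route as the paper's proof: an extremal sequence, weak $L^2$ compactness plus Mazur's lemma to get strong and almost everywhere convergence, integral positive definiteness and $\sigma$-compactness to obtain a continuous representative, dominated convergence/Fatou to preserve the integral, boundary-coherence to upgrade the almost-everywhere sign conditions to the pointwise preimage conditions, and normalisation to get $f(0)=1$. The only substantive addition is your explicit construction of $\lambda_{G}(V)^{-1}\,\mathbf{1}_{V}\ast\mathbf{1}_{V}$ showing $\mathcal{C}_{G}(\Omega_{+},\Omega_{-})>0$, a point the paper uses implicitly without proof.
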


\begin{proof} We start with using only the standard assumptions and will invoke boundary-coherence only at the end. In fact, we believe that this condition might be unnecessary, but we were not able to construct a proof without it.

	Note that for all $f \in \mathcal{F}_{G}(\Omega_{+}, \Omega_{-}),|f(g)| \le 1$ for all $g \in G$. Furthermore, since $f$ is an integrable continuous positive definite function, the non-negativity of its integral implies that $\int_{G}f_{-}\mbox{d}\lambda_{G} \le \int_{G}f_{+}\mbox{d}\lambda_{G} \le \lambda_G(\Omega)$. Hence $\|f\|_{L^{1}(G)} \le 2 \lambda_{G}(\Omega)$, which in turn implies that $\|f\|_{L^{2}(G)} \le 2\lambda_{G}(\Omega)$ for all $f \in \mathcal{F}_{G}(\Omega_{+}, \Omega_{-})$. In other words, $\mathcal{F}_{G}(\Omega_{+}, \Omega_{-})$ is a bounded subset of $L^{2}(G)$, and as such it belongs to the strong closure $\overline{B}_{2\lambda_{G}(\Omega)}(0)$ of the ball $B_{2\lambda_{G}(\Omega)}(0)$ of radius $2\lambda_{G}(\Omega)$ in $L^2(G)$ centred at 0. Being a closed and bounded subset of $L^2(G)$, $\overline{B}_{2\lambda_{G}(\Omega)}(0)$ is weakly sequentially compact according to \cite[Theorem 3.18]{brezis}. Now let $(f_{n})_{n \in \mathbb{N}}$ be a $\mathcal{C}_{G}(\Omega_{+}, \Omega_{-})$-extremal sequence in $\mathcal{F}_{G}(\Omega_{+}, \Omega_{-})$, that is, $\lim_{n \to \infty} \int_{G}f_n\mbox{d}\lambda_{G} = \mathcal{C}_{G}(\Omega_{+}, \Omega_{-})$. By the weak sequential compactness of $\overline{B}_{2\lambda_{G}(\Omega)}(0)$, there is a subsequence of $(f_{n})_{n \in \mathbb{N}}$ that converges weakly in $L^2(G)$ to some $f \in \overline{B}_{2\lambda_{G}(\Omega)}(0)$. By replacing $(f_{n})_{n \in \mathbb{N}}$ with this subsequence, assume that $(f_{n})_{n \in \mathbb{N}}$ converges weakly in $L^2(G)$ to $f$. Since $\mathcal{F}_{G}(\Omega_{+}, \Omega_{-})$ is a convex subset of $L^2(G)$, Lemma \ref{mazur} implies that there is a sequence $(h_{n})_{n \in \mathbb{N}}$ in $\mathcal{F}_{G}(\Omega_{+}, \Omega_{-})$ chosen such that $h_{n} \in \mathrm{conv} (\bigcup _{i =n}^{\infty}{f_{i}})$ for all $n \in \mathbb{N}$, and converging to $f$ strongly in $L^2(G)$. We claim that the latter sequence is $\mathcal{C}_{G}(\Omega_{+}, \Omega_{-})$-extremal. First note that by the extremality of $(f_{n})_{n \in \mathbb{N}}$, we have that for any $\varepsilon > 0$ there exists $N \in \mathbb{N}$ such that for all $n > N$ it holds that
	\begin{equation*}
	\left |\int_{G}f_{n}\mbox{d}\lambda_{G} - \mathcal{C}_{G}(\Omega_{+}, \Omega_{-}) \right| < \varepsilon.
	\end{equation*}
Now, write $h_{n} = \sum_{i =n}^{\infty}c_{i}^{n}f_{i}$ where $c_{i}^{n} \ge 0$, $\sum_{i = n}^{\infty}c_{i}^{n} = 1$, and $c_{i}^{n} = 0$ for all but finitely many $i \ge n$. Then for any $n \ge N$, we have
	\begin{equation*}
	\left | \int_{G}h_{n}\mbox{d}\lambda_{G} - \mathcal{C}_{G}(\Omega_{+}, \Omega_{-}) \right| \le \sum_{i =n}^{\infty}c_{i}^{n}\left|\int_{G}f_{i}\mbox{d}\lambda_{G} - \mathcal{C}_{G}(\Omega_{+}, \Omega_{-}) \right| < \sum_{i = n}^\infty c_{i}^{n}\varepsilon = \varepsilon.
	\end{equation*}
Therefore $(h_{n})_{n \in \mathbb{N}}$ is $\mathcal{C}_{G}(\Omega_{+}, \Omega_{-})$-extremal. So, replacing, if necessary, the sequence $(f_{n})_{n \in \mathbb{N}}$ by this new sequence, assume that $(f_{n})_{n \in \mathbb{N}}$ converges to $f$ strongly in $L^2(G)$.

By passing to a pointwise almost everywhere convergent subsequence, assume that $(f_{n})_{n \in \mathbb{N}}$ converges to $f$ pointwise almost everywhere. The fact that for all $n \in \mathbb{N}$, $|f_{n}(g)| \le 1$ for all $g \in G$ and $(f_{n})_{n \in \mathbb{N}}$ converges almost everywhere to $f$ implies that $f$ is bounded in $L^{\infty}(G)$ by 1. Since $(f_{n})_{n \in \mathbb{N}}$ converges weakly in $L^2(G)$ to $f$, we have
	$$
	\int_{G}(\Star{h} \ast h)f \mbox{d}\lambda_{G} = \lim_{n \to \infty} \int_{G}(\Star{h}\ast h)f_{n}\mbox{d}\lambda_{G} \ge 0,
	$$
	for all compactly supported continuous complex-valued functions $h$. This shows that $f$ is integrally positive definite. The function $f$ being integrally positive definite and $G$ being $\sigma$-compact implies that $f$ agrees almost everywhere with a continuous positive definite function. Thus, by correcting $f$ on a set of Haar measure zero, thus not changing the value of its integral, assume that $f$ is a continuous positive definite function.
	
	Let $S_{+}:=f^{-1}(0,\infty)$ and $S_{-}:=f^{-1}(-\infty,0)$. If $f_n(x)\to f(x)>0$, then $x \in \Omega_{+}$, and similarly for negative values. Therefore, by the almost everywhere convergence, we have that $S_{+}\setminus\Omega_{+}$ and $S_{-}\setminus\Omega_{-}$ are sets of measure zero.

	Next we show that $f \in L^1(G)$. Since $f^{-1}(0, \infty) =S_{+}$ and $S_{+} \setminus \Omega_{+}$ is of measure zero, we have,
	\begin{equation}\label{positivitybound}
	\int_{G}f_{+}\textup{d}\lambda_{G} = \int_{S_{+}}f_{+}\textup{d}\lambda_{G} = \int_{\Omega_{+}} f_{+}\textup{d}\lambda_{G} \le \lambda_{G}(\Omega_{+}) < \infty.
	\end{equation}
	On the other hand, by Fatou's lemma,
	\begin{equation}\label{negativitybound}
	\begin{split}
	\int_{G}f_{-}\textup{d}\lambda_{G} & \le \liminf_{n \to \infty} \int_{G}(f_{n})_{-}\textup{d}\lambda_{G}\\
	&\le \liminf_{n \to \infty}\int_{G}(f_{n})_{+}\textup{d}\lambda_{G} \\
	& \le \lambda_{G}(\Omega_{+}) < \infty,
	\end{split}
	\end{equation}
	where the second inequality of \eqref{negativitybound} is obtained using that $f_{n}$ being positive definite and integrable implies that
	\begin{equation*}
	0 \le \int_{G}f_{n}\mbox{d}\lambda_{G} = \int_{G}(f_{n})_{+}\mbox{d}\lambda_{G} - \int_{G}(f_{n})_{-}\mbox{d}\lambda_{G}.
	\end{equation*}
	Together, \eqref{positivitybound} and \eqref{negativitybound} imply that
	\begin{equation}
	\int_{G}|f|\textup{d}\lambda_{G} =   \int_{G}f_{+}\textup{d}\lambda_{G} +  \int_{G}f_{-}\textup{d}\lambda_{G} < \infty.
	\end{equation}
	Hence $f \in L^1(G)$.
	
	Note that the sequence $((f_{n})_{+})_{n \in \mathbb{N}}$ of the positive parts of the $f_{n}$'s converges pointwise almost everywhere to $f_{+}$, and that $(f_{n})_{+} \le \mathbf{1}_{\Omega_{+}}$. Therefore, by Lebesgue's Dominated Convergence theorem and $\lambda_{G}(\Omega_{+}) < \infty$, we have
	\begin{equation}\label{eq10}
	\int_{G}f_{+}\mbox{d}\lambda_{G} =
	\lim_{n \to \infty} \int_{G}(f_{n})_{+}\mbox{d}\lambda_{G}.
	\end{equation}
	Recall that from  \eqref{negativitybound} we have the inequality
	\begin{equation}\label{eq9}
	\int_{G}f_{-} \lambda_{G}  \mbox{d}\lambda_{G} \le \liminf_{n \to \infty}\int_{G}(f_{n})_{-}\mbox{d}\lambda_{G}.
	\end{equation}
	Now, subtracting \eqref{eq9} from \eqref{eq10} we have
	\begin{equation*}
	\begin{split}
	\int_G f  \text{d}\lambda_G &\ge  \lim_{n \to \infty} \int_{G}(f_n)_{+}\text{d}\lambda_G - \liminf_{n \to \infty}{\int_{G}(f_n)_{-}} \text{d}\lambda_G \\
	&= \limsup_{n \to \infty}{\int_{G}\left((f_n)_{+} - (f_n)_{-}\right) \text{d}\lambda_G} \\
	&= \lim_{n \to \infty}\int_{G}f_n \mbox{d}\lambda _{G} = \mathcal{C}_{G}(\Omega_{+}, \Omega_{-}).
	\end{split}
	\end{equation*}
	
	Only now we employ boundary-coherence to show that $f^{-1}(0,\infty) \subset \Omega_{+}$ and also $f^{-1}(-\infty,0) \subset \Omega_{-}$. First, if $U$ is a non-empty open subset of $\ext\Omega_{+}$, then almost everywhere on $U$ we have $f_n\to f$, while $f_n \le 0$, hence also $f \le 0$. Consider an arbitrary $x \not \in \Omega_{+}$. By continuity of $f$, fixing any $\ve>0$ we can find a sufficiently small open neighbourhood $V$ of $x$ such that $f(y) \ge f(x)- \ve$ for $y \in V$. Whether $x \in \ext \Omega_{+}$ or is only in $\partial \Omega_{+}$, by boundary-coherence there are points $z \in V \cap \ext \Omega_{+}$, so in particular there is a small neighbourhood $U\subset V$ of $z$ fully in $\ext \Omega_{+}$. By the above we know that $f\le 0$ almost everywhere on $U$. So, picking a point $w\in U$ satisfying $f(w) \le 0$, we also have $w \in V$, and hence $0 \ge f(w) \ge f(x)-\ve$. It follows that $f(x)\le \ve$, and as this holds for all $\ve>0$, we find $f(x)\le 0$. So, if $x \not\in \Omega_{+}$, then $x \not\in S_{+}$ either. Therefore, $S_{+}\subset \Omega_{+}$. Note that we could deduce this only by an application of the extra topological condition of boundary-coherence. The same argument gives also $S_{-} \subset \Omega_{-}$.
	
	To show that $f \in \mathcal{F}(\Omega_{+}, \Omega_{-})$, it remains to show that $f(0) = 1$. The fact that $\int_{G}f\mbox{d}\lambda_{G} \ge  \mathcal{C}_{G}(\Omega_{+},\Omega_{-}) > 0$, implies that $f$ is not identically zero, and in particular, by positive definiteness, implies that $f(0) \ne 0$. Suppose for a contradiction that we have $f(0) \ne 1$. Recall that $\|f\|_{L^\infty(G)} \le 1$. So, $0 < f(0) < 1$. Then the function $h = f/f(0)$ is in $\mathcal{F}_{G}(\Omega_{+}, \Omega_{-})$ and has integral greater than $\mathcal{C}_{G}(\Omega_{+}, \Omega_{-})$, contradicting the extremality of $\mathcal{C}_{G}(\Omega_{+}, \Omega_{-})$. It follows that $f(0) = 1$ and hence $f \in \mathcal{F}_{G}(\Omega_{+}, \Omega_{-})$. This also shows that $f$ is extremal.
	
\end{proof}

\begin{proposition}

For the extremal sequence $(f_n)_{n \in \mathbb{N}}$ in Theorem~\ref{sigmacompactcase} that converges to  $f$ strongly in $L^2(G)$ and pointwise almost everywhere, we have, in addition that $f_n \to f$ in $L^1(G)$.

\end{proposition}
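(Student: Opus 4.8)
The plan is to establish $L^1$-convergence separately for the positive and the negative parts of the $f_n$ and then recombine them. Since $f_n - f = \bigl((f_n)_+ - f_+\bigr) - \bigl((f_n)_- - f_-\bigr)$ pointwise, the triangle inequality gives $|f_n - f| \le |(f_n)_+ - f_+| + |(f_n)_- - f_-|$, so it suffices to show that $(f_n)_+ \to f_+$ and $(f_n)_- \to f_-$, each in $L^1(G)$.

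The positive parts are the easy case. Recall that $(f_n)_+ \to f_+$ pointwise almost everywhere and $(f_n)_+ \le \mathbf{1}_{\Omega_+}$, a fixed integrable function because $\lambda_G(\Omega_+) < \infty$. Passing to the limit shows $f_+ \le \mathbf{1}_{\Omega_+}$ almost everywhere as well, whence $|(f_n)_+ - f_+| \le 2\,\mathbf{1}_{\Omega_+}$, and Lebesgue's Dominated Convergence theorem yields at once $\|(f_n)_+ - f_+\|_{L^1(G)} \to 0$. In particular $\int_G (f_n)_+ \,\textup{d}\lambda_G \to \int_G f_+ \,\textup{d}\lambda_G$, which is \eqref{eq10}.

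The negative parts are the crux, and here the analogous argument fails: $\Omega_-$ may have infinite Haar measure, so $\mathbf{1}_{\Omega_-}$ need not be integrable and there is no obvious $L^1$-dominating function for $((f_n)_-)_{n \in \mathbb{N}}$. The hard part is therefore to bypass dominated convergence. I would first prove that the integrals converge, $\int_G (f_n)_- \,\textup{d}\lambda_G \to \int_G f_- \,\textup{d}\lambda_G$, using the decomposition $\int_G (f_n)_- \,\textup{d}\lambda_G = \int_G (f_n)_+ \,\textup{d}\lambda_G - \int_G f_n \,\textup{d}\lambda_G$, both terms on the right being integrable. The first term tends to $\int_G f_+ \,\textup{d}\lambda_G$ by the previous paragraph, and the second to $\mathcal{C}_G(\Omega_+, \Omega_-)$ by extremality of $(f_n)_{n \in \mathbb{N}}$; since $f$ is itself extremal, $\mathcal{C}_G(\Omega_+, \Omega_-) = \int_G f \,\textup{d}\lambda_G = \int_G f_+ \,\textup{d}\lambda_G - \int_G f_- \,\textup{d}\lambda_G$, so the limit is exactly $\int_G f_- \,\textup{d}\lambda_G < \infty$.

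With the convergence of integrals in hand, $((f_n)_-)_{n \in \mathbb{N}}$ is a sequence of non-negative functions converging pointwise almost everywhere to the non-negative integrable function $f_-$ with $\int_G (f_n)_- \,\textup{d}\lambda_G \to \int_G f_- \,\textup{d}\lambda_G$; Scheff\'e's lemma then delivers $\|(f_n)_- - f_-\|_{L^1(G)} \to 0$. (Should one wish to avoid quoting Scheff\'e, the identity $|(f_n)_- - f_-| = (f_n)_- + f_- - 2\min\{(f_n)_-, f_-\}$, combined with $\min\{(f_n)_-, f_-\} \le f_-$ and Dominated Convergence, gives the same limit.) Feeding the two $L^1$-convergences into the triangle inequality of the first paragraph completes the proof.
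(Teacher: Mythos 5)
Your proof is correct, but it takes a genuinely different route from the paper's. You split $f_n-f$ into positive and negative parts: for the positive parts you use dominated convergence with the dominating function $\mathbf{1}_{\Omega_+}$ (this much coincides with the paper's derivation of \eqref{eq10}); for the negative parts you deduce convergence of the integrals from the bookkeeping identity $\int_G (f_n)_-\,\textup{d}\lambda_G = \int_G (f_n)_+\,\textup{d}\lambda_G - \int_G f_n\,\textup{d}\lambda_G$ together with extremality of the sequence \emph{and} of the limit ($\int_G f\,\textup{d}\lambda_G = \mathcal{C}_{G}(\Omega_+,\Omega_-)$, from Theorem~\ref{sigmacompactcase}), and then invoke Scheff\'e's lemma. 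The paper instead multiplies by an approximate unity $k$ (Lemma~\ref{approx_unity}): since the product of positive definite functions is positive definite, $f_nk \in \mathcal{F}_G(\Omega_+,\Omega_-)$, so $\int_G f_nk\,\textup{d}\lambda_G \le \mathcal{C}_G(\Omega_+,\Omega_-)$, which yields the tail bound $\int_G (f_n)_-(1-k)\,\textup{d}\lambda_G \le \int_G (f_n)_+(1-k)\,\textup{d}\lambda_G + \frac1n$; the part near the compact set is then handled by the $L^2$ convergence. Your argument is shorter and more elementary --- no approximate unity, no Schur-product fact, no splitting of $G$ into a compact piece and a tail --- and since the proposition is stated after Theorem~\ref{sigmacompactcase}, leaning on the extremality of $f$ is perfectly legitimate. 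What the paper's approach buys in exchange is that it never uses the extremality of the limit function, only that of the sequence and the class membership of $f_nk$ (so it would still work before one knows $\int_G f\,\textup{d}\lambda_G = \mathcal{C}_G(\Omega_+,\Omega_-)$), and the multiplication-by-$k$ technique is recycled immediately afterwards in the paper's next proposition on compactly supported approximation.
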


\begin{proof}
Take any $\ee>0$. As $f \in L^1(G)$, there is a compact set $K\Subset G$ with $\int_{G\setminus K} |f|\textup{d}\lambda_{G} <\ee$. Also, as $\Omega_{+}$ and hence $S_{+}$ as well as $S_{+}\cup \Omega_{+}$ have finite measure, by inner regularity of the Haar measure there exists a compact set $K'$ such that  $\lambda_G(( S_{+}\cup \Omega_{+} ) \setminus K') <\ee$. We may assume that $K \supset K'$ and hence $K$ satisfies both requirements. By Lemma \ref{approx_unity}, we can find a function $k$, which  is continuous, positive definite, compactly supported, $0\le k\le 1$, and $k|_K >  1-\ee$.
	
	We have
	\begin{equation*}
	\begin{split}
	\int_G |f|(1-k)\textup{d}\lambda_{G} & \le \int_K |f|(1-k)\textup{d}\lambda_{G} + \int_{G \setminus K}  |f|(1-k)\textup{d}\lambda_{G} \\
	&\le \eta \|f\|_{L^1(G)} +  \int_{G \setminus K} |f| \textup{d}\lambda_{G} \\
	&\le \eta (\|f\|_{L^1(G)} + 1) \le \eta (2 \lambda_G(\Omega_+) + 1).
	\end{split}
	\end{equation*}
	
	Next, we refer to the $L^2$ convergence $f_n \to f$. As $k\in L^2(G)$, we have $\int_G |f-f_n|k \textup{d}\lambda_{G}\to 0$, and thus $\int_G |f-f_n|k \textup{d}\lambda_{G} < \eta$ for $n$ large enough.
	
	Let us assume without loss of generality that $\int_G f_n\textup{d}\lambda_{G} \ge \mathcal{C}_{G}(\Omega_{+},\Omega_{-}) - \frac{1}{n}$ for all $n \in \mathbb{N}$. Taking into account $f_n k\in \mathcal{F}_{G}(\Omega_{+},\Omega_{-})$\footnote{We use here the fact that the product of two positive definite functions is positive definite.}, we necessarily have $\int_G f_nk \textup{d}\lambda_{G} \le \mathcal{C}_{G}(\Omega_{+},\Omega_{-})$. Thus we can write
	\begin{equation*}
	\begin{split}
	\mathcal{C}_{G}(\Omega_{+},\Omega_{-}) \ge \int_G f_nk \textup{d}\lambda_{G} & = \int_G f_n \textup{d}\lambda_{G} - \int_G f_n(1-k)\textup{d}\lambda_{G} \\
	& \ge \mathcal{C}_{G}(\Omega_{+},\Omega_{-}) - \frac{1}{n} - \int_G (f_n)_{+}(1-k) \textup{d}\lambda_{G}+\int_G (f_n)_{-}(1-k)\textup{d}\lambda_{G},
	\end{split}
	\end{equation*}
therefore
	\begin{equation*}
	\int_G (f_n)_{-}(1-k) \textup{d}\lambda_{G}\le \int_G (f_n)_{+}(1-k)\textup{d}\lambda_{G} + \frac{1}{n}.
	\end{equation*}
	We get
	\begin{equation}\label{remark_L1_est1}
	\int_G (1-k)|f_n|\textup{d}\lambda_{G} =\int_G (1-k)(f_n)_{+}\textup{d}\lambda_{G} +\int_G (1-k)(f_n)_{-}\textup{d}\lambda_{G} \le 2 \int_G (f_n)_{+}(1-k) \textup{d}\lambda_{G} + \frac{1}{n}.
	\end{equation}
	Using the facts that $0 \le 1 - k < \eta$ on $K$, $0 \le k \le 1$ on $G$, $(f_n)_+ \le 1$ on $G$, and $(f_n)_+$ vanishes outside $\Omega_+$, we estimate
	\begin{equation*}
	\begin{split}
	\int_G (f_n)_{+}(1-k) \textup{d}\lambda_{G}
	& = \int_K (f_n)_{+}(1-k) \textup{d}\lambda_{G} + \int_{G \setminus K} (f_n)_{+}(1-k) \textup{d}\lambda_{G} \\
	& < \eta \int_K (f_n)_{+} \textup{d}\lambda_{G} + \int_{\Omega_+ \setminus K} (f_n)_{+}(1-k) \textup{d}\lambda_{G} \\
	& \le \eta \int_{\Omega_+} (f_n)_{+} \textup{d}\lambda_{G} + \lambda_G( \Omega_+ \setminus K ) \\
	& \le \eta \lambda_G(\Omega_+) + \eta = \eta( \lambda_G(\Omega_+) + 1).
	\end{split}
	\end{equation*}
	With this estimate at hand, \eqref{remark_L1_est1} gives
	\begin{equation*}
	\int_G (1-k)|f_n|\textup{d}\lambda_{G} \le 2 \eta( \lambda_G(\Omega_+) + 1) + \frac{1}{n}
	\le  2 \eta( \lambda_G(\Omega_+) + 2)
	\end{equation*}
	for sufficiently large $n$. Finally,
	\begin{equation*}
	\begin{split}
	\int_G |f-f_n|\textup{d}\lambda_{G}
	& = \int_G |f-f_n|k\textup{d}\lambda_{G} + \int_G (1-k)|f-f_n|\textup{d}\lambda_{G} \\
	& \le \int_G |f-f_n|k\textup{d}\lambda_{G} + \int_G |f|(1-k)\textup{d}\lambda_{G} + \int_G |f_n|(1-k)\textup{d}\lambda_{G} \\
	& \le  \eta + \eta (2 \lambda_G(\Omega_+) + 1) + 2 \eta( \lambda_G(\Omega_+) + 2)
	\end{split}
	\end{equation*}
for $n$ large enough, and hence $f_n\to f$ in $L^1(G)$.
\end{proof}

Next, we show the existence of an extremal function for general LCA groups. The key is to reduce the problem to the case of $\sigma$-compact groups, where an extremal function is known to exist, and then extending the solution to the general case. As such, we first show in the following lemma that boundary-coherence is hereditary with respect to open subgroups.

\begin{lemma}\label{hereditary_lemma}
Let $H$ be an open subgroup of an LCA group $G$ and $\Omega$ a subset of $G$. If $\Omega$ is boundary-coherent in $G$ then $\Omega \cap H $ is boundary-coherent in $H$, i.e., $\partial_{H}(\Omega \cap H) \subset \overline{\ext_{H}(\Omega \cap H)}^{H}$.
\end{lemma}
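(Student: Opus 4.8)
The plan is to exploit the fact that an open subgroup $H$ of a topological group is automatically closed, so that $H$ is clopen in $G$. This clopen-ness is the engine of the whole argument. For any subset $A \subset H$ the closure taken in $H$ agrees with the closure taken in $G$, because $\overline{A}^{G} \subset \overline{H}^{G} = H$ and so $\overline{A}^{H} = \overline{A}^{G} \cap H = \overline{A}^{G}$; and since $H$ is open, interiors and boundaries of subsets of $H$ computed in $H$ agree with those computed in $G$. First I would record these compatibilities explicitly, in particular the identities $\overline{\Omega \cap H}^{H} = \overline{\Omega \cap H}^{G}$ and $\Int_{H}(\Omega \cap H) = \Int_{G}(\Omega \cap H) = \Int_{G}\Omega \cap H$, the last equality using $\Int_{G}(\Omega \cap H) = \Int_{G}\Omega \cap \Int_{G}H$ together with $\Int_{G}H = H$.

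Second, I would prove the containment $\partial_{H}(\Omega \cap H) \subset \partial_{G}\Omega \cap H$. Take $x \in \partial_{H}(\Omega \cap H)$. Then $x \in H$ and, by the first step, $x \in \overline{\Omega \cap H}^{G} \subset \overline{\Omega}^{G}$, while $x \notin \Int_{H}(\Omega \cap H) = \Int_{G}\Omega \cap H$; since $x \in H$, this forces $x \notin \Int_{G}\Omega$. Hence $x \in \overline{\Omega}^{G} \setminus \Int_{G}\Omega = \partial_{G}\Omega$, which gives the containment.

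Third, invoking the hypothesis that $\Omega$ is boundary-coherent in $G$, every such $x$ lies in $\overline{\ext_{G}\Omega}^{G}$. I would then transfer this exterior-approximation from $G$ into $H$ using the elementary inclusion $\ext_{G}\Omega \cap H \subset \ext_{H}(\Omega \cap H)$, which follows because $\overline{\Omega \cap H}^{G} \subset \overline{\Omega}^{G}$, so a point of $H$ lying outside $\overline{\Omega}^{G}$ lies a fortiori outside $\overline{\Omega \cap H}^{G}$. The final step is the topological passage: since $H$ is open and $x \in H$, the neighbourhoods of $x$ contained in $H$ form a neighbourhood basis at $x$; each such $V$ meets $\ext_{G}\Omega$ because $x \in \overline{\ext_{G}\Omega}^{G}$, and since $V \subset H$ it in fact meets $\ext_{G}\Omega \cap H \subset \ext_{H}(\Omega \cap H)$. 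Therefore $x \in \overline{\ext_{H}(\Omega \cap H)}^{H}$, which is precisely boundary-coherence of $\Omega \cap H$ in $H$.

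I expect the only genuine obstacle to be the bookkeeping: keeping the closure, interior, and boundary operators relative to $H$ and to $G$ carefully distinguished, and ensuring that each appeal to the principle ``same in $H$ as in $G$'' is justified by $H$ being clopen rather than merely open. Once those identities are in place, the remaining content is the short neighbourhood-basis argument of the third paragraph.
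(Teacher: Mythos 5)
Your proof is correct and follows essentially the same route as the paper's: both reduce $\partial_{H}(\Omega \cap H)$ to $\partial_{G}\Omega \cap H$ using that $H$ is clopen, invoke boundary-coherence in $G$, and transfer the exterior approximation into $H$ via the neighbourhood argument that openness of $H$ permits. The only difference is bookkeeping: the paper records the set identities $\ext_{H}(\Omega \cap H) = \ext_{G}\Omega \cap H$ and $\overline{\ext_{G}\Omega \cap H}^{G} = \overline{\ext_{G}\Omega}^{G} \cap H$ and chains them, while you argue pointwise and need only the inclusion $\ext_{G}\Omega \cap H \subset \ext_{H}(\Omega \cap H)$.
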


\begin{proof}
Let us first notice that $\ext_H(\Omega \cap H) = \ext_G(\Omega) \cap H$. Indeed, the exterior of a set is the interior of its complement. Now the statement follows from an easy observation that for a set $A \subset G$ we have $\Int_H(A \cap H) = \Int_G(A) \cap H$ on account of $H$ being open.

Furthermore, since $H$ is an open subgroup it is closed in $G$, and hence  $\overline{\ext_H(\Omega \cap H)}^{H} = \overline{\ext_G(\Omega) \cap H}^{G}$. Moreover, the fact that $H$ is open implies that $\overline{\ext_G(\Omega) \cap H}^{G} = \overline{\ext_G(\Omega)}^{G} \cap H$. Indeed, the containment $\overline{\ext_G(\Omega) \cap H}^{G} \subset \overline{\ext_G(\Omega)}^{G} \cap H$  follows from the fact that $\ext_G(\Omega) \cap H$ is contained in $\overline{\ext_G(\Omega)}^{G} \cap H$, and the latter set is closed in $G$. For the reverse containment, let $x \in \overline{\ext_G(\Omega)}^{G} \cap H$. Take an arbitrary neighbourhood $V$ of $x$ in $G$. Since $H$ is open, $V \cap H$ is also a neighbourhood of $x$ in $G$.  Since $x \in \overline{\ext_G(\Omega)}^{G}$, we have that $V \cap H \cap \ext_{G}(\Omega) \ne \emptyset$. This implies that $x \in \overline{\ext_G(\Omega) \cap H}^{G}$.

Now, using the fact that $\partial_G(H) = \emptyset$, we conclude
\begin{equation*}
\begin{split}
	\partial_{H}(\Omega \cap H) &= \overline{\Omega \cap H}^{H} \cap \overline{{H}\setminus(\Omega \cap H)}^{H}\\
	&= \overline{\Omega \cap H}^{G} \cap \overline{{H}\setminus(\Omega \cap H)}^{G} \\
	&= \partial_{G}(\Omega \cap H) \cap H  \\
	& \subset (\partial_{G}(\Omega)\cup \partial_{G}(H)) \cap H \\
	& = \partial_{G}(\Omega) \cap H \\
	& \subset \overline{\ext_G(\Omega)}^{G} \cap H\\
	& = \overline{\ext_H(\Omega \cap H)}^{H}.
\end{split}
\end{equation*}
\end{proof}

\begin{theorem}\label{generalcase}
	Let $G$ be a LCA group, and $\Omega_{+}$ and $\Omega_{-}$ be boundary-coherent symmetric subsets of $G$ with $\Omega_{+}$ a neighbourhood of $0$ having finite Haar measure. Then there exists a $\mathcal{C}_{G}(\Omega_{+}, \Omega_{-})$-extremal function $f \in \mathcal{F}_{G}(\Omega_{+}, \Omega_{-})$ satisfying $\int_{G}f\textup{d}\lambda_{G} = \mathcal{C}_{G}(\Omega_{+}, \Omega_{-})$.
\end{theorem}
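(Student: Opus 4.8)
The plan is to reduce to the $\sigma$-compact situation already settled in Theorem~\ref{sigmacompactcase} and then transport the resulting extremal function back to $G$ by trivial extension. Since $\Omega_{+}$ has finite Haar measure it is in particular $\sigma$-finite, and by hypothesis it is a symmetric neighbourhood of $0$; hence the hypotheses of Theorem~\ref{reduction} are met. Applying that theorem, I obtain an open $\sigma$-compact subgroup $H$ of $G$ with $\Omega_{+} \subset H$ and
$$
\mathcal{C}_{G}(\Omega_{+}, \Omega_{-}) = \mathcal{C}_{H}(\Omega_{+}, \Omega_{-}\cap H).
$$
The whole task then reduces to producing an extremal function for the problem on $H$ and checking that its trivial extension solves the problem on $G$.

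Next I would verify that the data $(H, \Omega_{+}, \Omega_{-}\cap H)$ satisfy the hypotheses of Theorem~\ref{sigmacompactcase}. The group $H$ is $\sigma$-compact. Because $\Omega_{+} \subset H$ and $H$ is open, $\Omega_{+}$ is a neighbourhood of $0$ in $H$ (any $G$-open set about $0$ contained in $\Omega_{+}$ already lies in $H$, hence is $H$-open), and $\lambda_{H}(\Omega_{+}) = \lambda_{G}(\Omega_{+}) < \infty$. Symmetry of $\Omega_{+}$ is inherited, and $\Omega_{-}\cap H$ is symmetric since $H = -H$. The only genuine point is boundary-coherence inside $H$, and here I invoke Lemma~\ref{hereditary_lemma}: as $\Omega_{+}$ is boundary-coherent in $G$ and $\Omega_{+}\cap H = \Omega_{+}$, the set $\Omega_{+}$ is boundary-coherent in $H$, and likewise $\Omega_{-}\cap H$ is boundary-coherent in $H$. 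This is precisely the role for which Lemma~\ref{hereditary_lemma} was prepared, and the transfer of boundary-coherence to the open subgroup is the subtlest—though now routine—step of the argument.

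With all hypotheses confirmed, Theorem~\ref{sigmacompactcase} yields a function $\varphi \in \mathcal{F}_{H}(\Omega_{+}, \Omega_{-}\cap H)$ with $\int_{H}\varphi\,\mathrm{d}\lambda_{H} = \mathcal{C}_{H}(\Omega_{+}, \Omega_{-}\cap H)$. I would then form its trivial extension $\widetilde{\varphi}: G \to \mathbb{R}$. By Lemma~\ref{extensionlemma}, applied with $\Omega_{+}\cap H = \Omega_{+}$, the function $\widetilde{\varphi}$ lies in $\mathcal{F}_{G}(\Omega_{+}, \Omega_{-})$. Since $\widetilde{\varphi}$ vanishes off $H$,
$$
\int_{G}\widetilde{\varphi}\,\mathrm{d}\lambda_{G} = \int_{H}\varphi\,\mathrm{d}\lambda_{H} = \mathcal{C}_{H}(\Omega_{+}, \Omega_{-}\cap H) = \mathcal{C}_{G}(\Omega_{+}, \Omega_{-}),
$$
the last equality being the reduction furnished by Theorem~\ref{reduction}. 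Hence $\widetilde{\varphi}$ is a $\mathcal{C}_{G}(\Omega_{+}, \Omega_{-})$-extremal function, which completes the proof. The essential content lives entirely in the three previously established results, Theorems~\ref{reduction} and~\ref{sigmacompactcase} together with Lemma~\ref{hereditary_lemma}; the present argument is the bookkeeping that glues them together, and I expect no obstacle beyond the careful confirmation that boundary-coherence passes to $H$.
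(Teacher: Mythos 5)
Your proposal is correct and follows exactly the paper's own route: reduce via Theorem~\ref{reduction} to an open $\sigma$-compact subgroup $H \supset \Omega_{+}$, transfer boundary-coherence with Lemma~\ref{hereditary_lemma}, apply Theorem~\ref{sigmacompactcase} on $H$, and extend the extremal function trivially back to $G$ using Lemma~\ref{extensionlemma}. Your extra verifications (finite measure implies $\sigma$-finiteness, $\Omega_{+}$ being an $H$-neighbourhood of $0$, inherited symmetry) are details the paper leaves implicit, and they are all accurate.
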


\begin{proof}
	On account of Theorem \ref{reduction}, let $H \supset \Omega_{+}$ be an open $\sigma$-compact subgroup of $G$ such that $\mathcal{C}_{G}(\Omega_{+}, \Omega_{-}) = \mathcal{C}_{H}(\Omega_{+}, \Omega_{-} \cap H)$. By Lemma \ref{hereditary_lemma}, $\Omega_{+} = \Omega_{+} \cap H$ and $\Omega_{-} \cap H$ are boundary-coherent. Therefore, by Theorem \ref{sigmacompactcase}, there exists a $\mathcal{C}_{H}(\Omega_{+}, \Omega_{-} \cap H)$-extremal function $f \in \mathcal{F}_{H}(\Omega_{+}, \Omega_{-} \cap H)$ such that
	
	\begin{equation}
	\int_{H}f\mbox{d}\lambda_{H} = \mathcal{C}_{H}(\Omega_{+}, \Omega_{-} \cap H).
	\end{equation}

	Let $\widetilde{f} : G \to \mathbb{R}$ be the trivial extension of $f$. Then $\widetilde{f}$ is in $\mathcal{F}_{G}(\Omega_{+}, \Omega_{-})$. We claim that $\widetilde{f}$ is a $\mathcal{C}_{G}(\Omega_{+}, \Omega_{-})$-extremal function. Indeed,
	
	\begin{equation*}
	\int_{G}\widetilde{f}\textup{d}\lambda_{G} = \int_{H}f\textup{d}\lambda_{H} = \mathcal{C}_{H}(\Omega_{+}, \Omega_{-} \cap H) = \mathcal{C}_{G}(\Omega_{+}, \Omega_{-}).
	\end{equation*}
\end{proof}

\begin{proposition}

Let $f$ be a $\mathcal{C}_{G}(\Omega_{+}, \Omega_{-})$-extremal function. Then there exists a sequence $(f_n)_{n \in \mathbb{N}}$ in $\mathcal{F}_{G}(\Omega_{+}, \Omega_{-})$ such that each $f_n$ has compact support and $(f_n)_{n \in \mathbb{N}}$ converges to $f$ in $L^1(G)$ and uniformly on compact sets.

\end{proposition}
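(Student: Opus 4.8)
The plan is to multiply the extremal function $f$ by a sequence of compactly supported ``approximations of unity'' supplied by Lemma~\ref{approx_unity}, just as in the preceding proposition, but now arranging the cut-offs to grow so as to exhaust the support of $f$. First I would locate the support: since $f \in \mathcal{F}_{G}(\Omega_{+},\Omega_{-}) \subset L^{1}(G)$, Lemma~\ref{supportlemma} provides an open (hence closed) $\sigma$-compact subgroup $H$ with $\supp f \subset H$, so that $f$ vanishes off $H$. A locally compact $\sigma$-compact Hausdorff space is \emph{hemicompact}, so I would fix an increasing exhaustion $K_{1} \subset \Int K_{2} \subset K_{2} \subset \cdots$ of $H$ by compact sets with $\bigcup_{n} K_{n} = H$ and such that every compact subset of $H$ lies in some $K_{n}$; this is built inductively from a countable compact cover of $H$ using local compactness. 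For each $n$, Lemma~\ref{approx_unity} furnishes a continuous, compactly supported, positive definite $k_{n}$ with $k_{n}(0)=1$, $0 \le k_{n} \le 1$, and $k_{n}|_{K_{n}} > 1 - 1/n$. I then set $f_{n} := f k_{n}$.

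Next I would check that $f_{n} \in \mathcal{F}_{G}(\Omega_{+},\Omega_{-})$ and that $f_{n}$ has compact support. Since the product of two positive definite functions is positive definite, $f_{n}$ is a continuous positive definite function with $f_{n}(0)=f(0)k_{n}(0)=1$, and $\supp f_{n} \subset \supp k_{n}$ is compact, whence $f_{n} \in L^{1}(G)$. Because $0 \le k_{n} \le 1$, the value $f_{n}(x)$ has the same sign as $f(x)$ wherever $k_{n}(x)>0$ and equals $0$ otherwise; consequently $f_{n}^{-1}(0,\infty) \subset f^{-1}(0,\infty) \subset \Omega_{+}$ and $f_{n}^{-1}(-\infty,0) \subset f^{-1}(-\infty,0) \subset \Omega_{-}$, so the support conditions persist and $f_{n}$ lies in the class.

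Finally I would establish the two modes of convergence, writing $|f - f_{n}| = |f|(1-k_{n})$ with $0 \le 1-k_{n} \le 1$ and $|f| \le 1$. For $L^{1}$ convergence I split the integral over $K_{n}$ and its complement: on $K_{n}$ one has $1-k_{n} < 1/n$, contributing at most $\|f\|_{L^{1}(G)}/n$, while on $G \setminus K_{n}$ the integrand is dominated by $|f|$, and since $f$ is supported in $H = \bigcup_{n} K_{n}$ with the $K_{n}$ increasing, $\int_{G \setminus K_{n}}|f|\,\mathrm{d}\lambda_{G} = \int_{H \setminus K_{n}}|f|\,\mathrm{d}\lambda_{G} \to 0$ by dominated convergence. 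For uniform convergence on a compact set $C \Subset G$, note that $f$ vanishes off $H$, so $|f-f_{n}|$ vanishes on $C \setminus H$, while $C \cap H$ is a compact subset of $H$ and hence lies in $K_{N}$ for some $N$; then for $n \ge N$ one gets $\sup_{C}|f-f_{n}| \le 1/n$.

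The step requiring the most care is the uniform-on-compacts claim: a single sequence must work for every compact set simultaneously. This is precisely why the exhaustion $(K_{n})$ must be chosen cofinal among the compact subsets of $H$ (hemicompactness), rather than merely covering $H$; the reduction of $G$ to the $\sigma$-compact subgroup $H$ via Lemma~\ref{supportlemma} is what makes such an exhaustion available.
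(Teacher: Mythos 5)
Your proposal is correct and follows essentially the same route as the paper: cut off the extremal $f$ by the approximate units $k_n$ of Lemma~\ref{approx_unity} along an increasing compact exhaustion $(K_n)$, note $f_n := f k_n \in \mathcal{F}_{G}(\Omega_{+},\Omega_{-})$, and estimate $|f-f_n| = |f|(1-k_n)$ separately on $K_n$ and its complement. In fact your explicit appeal to hemicompactness (choosing $K_n \subset \Int K_{n+1}$ inside the $\sigma$-compact open subgroup $H$ supplied by Lemma~\ref{supportlemma}) carefully justifies a step the paper uses without comment, namely that every compact piece $C \cap \supp f$ is eventually contained in $K_N$, which does not follow from merely having increasing compact sets whose union covers $\supp f$.
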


\begin{proof}

Since $f \in L^1(G)$ and therefore $\supp{f}$ is $\sigma$-compact (see Lemma~\ref{supportlemma}), we can find a sequence of sets $(K_n)_{n \in \mathbb{N}}$ such that $\int_{G \setminus K_n} |f| \textup{d} \lambda_G < \frac{1}{n}$, $K_n \Subset G$, $K_n \subset K_{n+1}$, and $\supp{f} \subset \bigcup_{n \in \mathbb{N}} K_n$. By Lemma~\ref{approx_unity} there exist continuous, compactly supported, positive definite functions $k_n$ such that $k_n(0) = 1$, $0 \le k_n \le 1$, $\left. k_n \right|_{K_n} > 1 - \frac{1}{n}$.

Put $f_n := f k_n$, $n \in \mathbb{N}$. Clearly, $f_n \in \mathcal{F}_{G}(\Omega_{+}, \Omega_{-})$ and compactly supported.

To establish convergence in $L^1(G)$, we estimate
\begin{equation*}
\begin{split}
\int_{G} |f_n - f| \textup{d} \lambda_G
& \le \int_{K_n} |f_n - f| \textup{d} \lambda_G + \int_{G \setminus K_n} |f_n| \textup{d} \lambda_G + \int_{G \setminus K_n} |f| \textup{d} \lambda_G \\
& =  \int_{K_n} |f| (1 - k_n) \textup{d} \lambda_G + \int_{G \setminus K_n} |f| k_n \textup{d} \lambda_G + \int_{G \setminus K_n} |f| \textup{d} \lambda_G \\
& \le \frac{1}{n} \int_{K_n} |f| \textup{d} \lambda_G + 2 \int_{G \setminus K_n} |f| \textup{d} \lambda_G \\
& \le \frac{1}{n} (\|f\|_{L^1(G)} + 2).
\end{split}
\end{equation*}

Now let $C$ be any compact set in $G$. Then there exists $N \in \mathbb{N}$ such that $C \cap \supp{f} \subset K_n$ for all $n \ge N$. We have
\begin{equation*}
\|f_n - f\|_{L^\infty(C)} = \|f (1 - k_n)\|_{L^\infty(C)} = \|f (1 - k_n)\|_{L^\infty( C \cap \supp{f})}
\le \|f (1 - k_n)\|_{L^\infty(K_n)} \le \frac{1}{n},
\end{equation*}
which implies uniform convergence on $C$.

\end{proof}

\begin{corollary}[Tur\'{a}n problem]
	Let $G$ be a LCA group and $\Omega \subset G$ a boundary-coherent symmetric neighbourhood of $0$ having finite Haar measure. Then there exists an extremal function for the Tur\'{a}n constant $\mathcal{T}_{G}(\Omega)$.
\end{corollary}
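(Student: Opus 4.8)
The plan is to derive this statement as the immediate specialization of Theorem \ref{generalcase} to the diagonal case $\Omega_{+} = \Omega_{-} = \Omega$. First I would observe, directly from the definition in \eqref{delsarte'}, that the Tur\'an constant is nothing but the general extremal value for this choice of data: since $\mathcal{F}_{G}(\Omega, \Omega)$ is the competing class and
\begin{equation*}
\mathcal{T}_{G}(\Omega) = \sup_{f \in \mathcal{F}_{G}(\Omega, \Omega)} \int_{G} f \, \textup{d}\lambda_{G} = \mathcal{C}_{G}(\Omega, \Omega),
\end{equation*}
a $\mathcal{C}_{G}(\Omega, \Omega)$-extremal function is precisely a Tur\'an-extremal function in the sense required.

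Next I would verify that the hypotheses of Theorem \ref{generalcase} are met when we put $\Omega_{+} := \Omega$ and $\Omega_{-} := \Omega$. Both sets are then boundary-coherent and symmetric, since $\Omega$ is assumed to have these two properties, and $\Omega_{+} = \Omega$ is a neighbourhood of $0$ of finite Haar measure by hypothesis. Thus every assumption of Theorem \ref{generalcase} holds verbatim, the only bookkeeping being that the requirements imposed on $\Omega_{-}$ are automatic here because $\Omega_{-}$ coincides with $\Omega_{+}$.

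Applying Theorem \ref{generalcase} then produces a function $f \in \mathcal{F}_{G}(\Omega, \Omega)$ satisfying $\int_{G} f \, \textup{d}\lambda_{G} = \mathcal{C}_{G}(\Omega, \Omega) = \mathcal{T}_{G}(\Omega)$, which is exactly the asserted extremal function for the Tur\'an problem. I do not anticipate any genuine obstacle at this stage: the asymmetric two-set formulation of Theorem \ref{generalcase} was set up precisely so that the Tur\'an (and likewise Delsarte) problem drops out as a diagonal special case, and the argument reduces to checking that the finite-measure, symmetry, and boundary-coherence hypotheses on the single set $\Omega$ transfer unchanged to the pair $(\Omega_{+}, \Omega_{-})$.
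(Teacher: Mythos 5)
Your proposal is correct and matches the paper's intent exactly: the paper states this as an immediate corollary of Theorem \ref{generalcase}, with the understanding that $\mathcal{T}_{G}(\Omega) = \mathcal{C}_{G}(\Omega,\Omega)$ by definition \eqref{delsarte'}, so specializing to $\Omega_{+} = \Omega_{-} = \Omega$ yields the extremal function. Your verification that the symmetry, boundary-coherence, neighbourhood, and finite-measure hypotheses transfer verbatim to the pair $(\Omega_{+}, \Omega_{-})$ is precisely the (unwritten) argument the paper relies on.
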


\begin{corollary}[Delsarte problem]\label{Delsarteexistence}
	Let $G$ be a LCA group and $\Omega \subset G$ a boundary-coherent symmetric neighbourhood of $0$ having finite Haar measure. Then there exists an extremal function for the Delsarte constant $\mathcal{D}_{G}(\Omega)$.
\end{corollary}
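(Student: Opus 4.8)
The plan is to recognise the Delsarte problem as the degenerate case of the general extremal problem in which no constraint is placed on the negative part of the admissible functions, and then simply to invoke Theorem~\ref{generalcase}. Concretely, comparing the definitions in \eqref{oursetting} and \eqref{delsarte'}, we have $\mathcal{F}_{G}(\Omega, G) = \mathcal{F}_{G}(\Omega_{+}, \Omega_{-})$ and hence $\mathcal{D}_{G}(\Omega) = \mathcal{C}_{G}(\Omega_{+}, \Omega_{-})$ upon setting $\Omega_{+} := \Omega$ and $\Omega_{-} := G$. Thus an extremal function for $\mathcal{D}_{G}(\Omega)$ is precisely a $\mathcal{C}_{G}(\Omega_{+}, \Omega_{-})$-extremal function for this choice, and it suffices to check that the hypotheses of Theorem~\ref{generalcase} hold for $\Omega_{+} = \Omega$ and $\Omega_{-} = G$.

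First I would dispose of the conditions on $\Omega_{+} = \Omega$: boundary-coherence, symmetry, being a neighbourhood of $0$, and finite Haar measure are all assumed in the statement of the corollary, so nothing further is needed. The only genuinely new verification concerns $\Omega_{-} = G$, for which Theorem~\ref{generalcase} requires it to be symmetric and boundary-coherent. Symmetry is immediate since $-G = G$. For boundary-coherence, observe that $G$ is clopen in itself, so $\overline{G} = G$ and $\ext G = G \setminus \overline{G} = \emptyset$; consequently $\partial G = \overline{G} \cap \overline{G \setminus G} = \emptyset$, and the required inclusion $\partial G \subset \overline{\ext G}$ holds vacuously. Hence $G$, viewed as the set $\Omega_{-}$, is (trivially) symmetric and boundary-coherent.

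With all hypotheses of Theorem~\ref{generalcase} verified, that theorem furnishes a function $f \in \mathcal{F}_{G}(\Omega, G)$ with $\int_{G} f\,\textup{d}\lambda_{G} = \mathcal{C}_{G}(\Omega, G) = \mathcal{D}_{G}(\Omega)$, which is the desired extremal function for the Delsarte constant. I do not expect any real obstacle here: the entire content of the argument is the reduction to Theorem~\ref{generalcase}, and the sole point deserving comment is the degenerate check that the full group $G$ satisfies the boundary-coherence condition, which it does because its boundary is empty.
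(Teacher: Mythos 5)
Your proposal is correct and is exactly the argument the paper intends: the corollary is stated without proof precisely because it is the specialization $\Omega_{+} = \Omega$, $\Omega_{-} = G$ of Theorem~\ref{generalcase}, and your verification that $G$ itself is symmetric and (vacuously, since $\partial G = \emptyset$ and $\ext G = \emptyset$) boundary-coherent is the only checking required. Nothing is missing.
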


\begin{example}
	
Consider the Tur\'an problem for $\Omega = (-2, -1) \cup (-1, 1) \cup (1, 2)$. Observe that $\Omega$ is not boundary-coherent. By \cite[Theorem 7]{kolrev}, the extremal constant is $\mathcal{T}_{\mathbb{R}}(\Omega) = 1$, which is attained by the function $f = \mathbf{1} _{\left [-\frac{1}{2},\frac{1}{2}\right]} \ast \mathbf{1} _{\left [-\frac{1}{2},\frac{1}{2}\right]}$ appearing on the left of Figure \ref{extfunctions1}. On the right of Figure \ref{extfunctions1} is the integrally positive definite function which is equal to $g = \frac{1}{2}\left(\mathbf{1}_{\left[-1,1\right]} \ast \mathbf{1}_{\left[-1,1\right]}\right)$ everywhere but the points $x = \pm 1$ where it vanishes, showing that allowing integrally positive definite (discontinuous) functions in our class can drastically increase the value of the extremal constant. Moreover, the function $f$ belongs to $\mathcal{F}_{\mathbb{R}}(\Omega, \Omega)$, showing that an extremal function can exist even when $\Omega = \Omega_{+}$ is not boundary-coherent.
	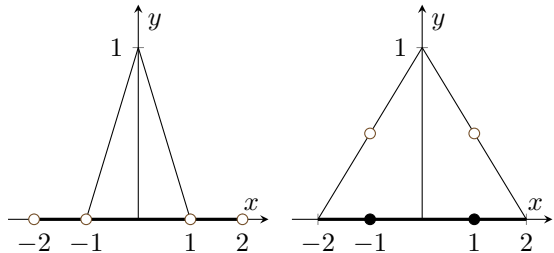
\begin{figure}[h]
		\begin{center}
		\begin{tikzpicture}
		\begin{axis}[scale = 0.5, ymajorticks=true, ytick = {1}, xtick={-2,-1,1,2}, clip = false, xmin= -2.5, ymin = 0, xmax = 2.5, ymax = 1.25, axis lines=middle, xlabel = $x$, ylabel = $y$
		]
		\addplot[domain = 0:1]{(1-x)};
		\addplot[domain = -1:0]{(1+x)};
\draw[very thick] (-2,0) -- (2,0);
		\addplot+[only marks,mark=*,mark options={scale=1, fill=white},text mark as node=true] coordinates {
			(-2,0) (-1,0) (1,0) (2,0)};
		\end{axis}
		\end{tikzpicture}
		\hskip 5pt
		\begin{tikzpicture}
		\begin{axis}[scale = 0.5, ymajorticks=true, ytick = {1}, xtick={-2,-1,1,2}, clip = false, xmin= -2.5, ymin = 0, xmax = 2.5, ymax = 1.25, axis lines=middle, xlabel = $x$, ylabel = $y$
		]
		\addplot[domain = 0:2]{(1-x/2)};
		\addplot[domain = -2:0]{(1+x/2)};
		\draw[very thick] (-2,0) -- (2,0);
		\addplot+[only marks,mark=*,mark options={scale=1, fill=white},text mark as node=true] coordinates {
			(-1,1/2)
			(1,1/2)};
		\addplot+[only marks,mark=*,mark options={scale=1, fill=black},text mark as node=true] coordinates {
			(-1,0)
			(1,0)};
		\end{axis}
		\end{tikzpicture}
		\end{center}
		\caption{Extremal functions for our class and if we allow functions which may be only integrally positive definite.}
		\label{extfunctions1}
	\end{figure}

\end{example}

\section{Equivalence of Extremal Problems}

Let $\Omega_{+}$ and  $\Omega_{-}$ be subsets of a LCA group $G$. In this section, we investigate conditions under which the extremal problem of calculating the extremal value $\mathcal{C}_{G}(\Omega_{+}, \Omega_{-})$ is equivalent to that of calculating the extremal value $\mathcal{C}^{*}_{G}(\Omega_{+}, \Omega_{-})$. This could either mean that the function classes $\mathcal{F}^{*}_{G}(\Omega_{+}, \Omega_{-})$ and $\mathcal{F}_{G}(\Omega_{+}, \Omega_{-})$ are equal, in which case the corresponding extremal values are equal, or it could mean that the extremal values are equal even though the corresponding function classes are not equal. For brevity, we drop the subscript $G$ and simply write, for example, $\mathcal{F}(\Omega_{+}, \Omega_{-})$ instead of $\mathcal{F}_{G}(\Omega_{+},  \Omega_{-})$.

Even though it is true that $\mathcal{C}(\Omega_{+}, \Omega_{-}) = \mathcal{C}^{*}(\Omega_{+}, \Omega_{-})$ for some classes of sets $\Omega_{+}, \Omega_{-}$, this can not always be proved by showing that $\mathcal{F}(\Omega_{+}, \Omega_{-}) = ~ \mathcal{F}^{*}(\Omega_{+}, \Omega_{-})$. This is because even though $\mathcal{F}^{*}(\Omega_{+}, \Omega_{-})\subset \mathcal{F}(\Omega_{+}, \Omega_{-})$, it is not true, in general, that $\mathcal{F}(\Omega_{+}, \Omega_{-}) \subset \mathcal{F}^{*}(\Omega_{+}, \Omega_{-})$, as the following example shows.

\begin{example}\label{turanexample}
	Let $G = \mathbb{R}$, $\Omega = \Omega_{+} = \Omega_{-} = (-1,1)$, so that we are in the situation of the Tur\'{a}n problem for an interval in $\mathbb{R}$. The extremal function for $\mathcal{T}_{\mathbb{R}}(\Omega)$ is the triangle function ${\mathbf{1}}_{\frac{1}{2}\Omega} \ast {\mathbf{1}}_{\frac{1}{2}\Omega} \in \mathcal{F}_{\mathbb{R}}(\Omega_{+}, \Omega_{-})$. It is easy to see that $\mathcal{T}_{\mathbb{R}}(\Omega) = \mathcal{T}^{*}_{\mathbb{R}}(\Omega) = 1$. However, the support of ${\mathbf{1}}_{\frac{1}{2}\Omega} \ast {\mathbf{1}}_{\frac{1}{2}\Omega}$ is $\left[-1,1\right]$, so that ${\mathbf{1}}_{\frac{1}{2}\Omega} \ast ~ {\mathbf{1}}_{\frac{1}{2}\Omega} \notin ~ \mathcal{F}^{*}_{\mathbb{R}}(\Omega_{+}, \Omega_{-})$.
\end{example}

In the opposite direction we have the following example, although it is quite special.

\begin{example}
	For a prime $p$, consider the group $\mathbb{Q}_{p}$ of $p$-adic rationals with the $p$-adic norm. Any ball of positive radius is both open and closed. So, if $\Omega_{+}$ and $\Omega_{-}$ are balls of positive radii with $\Omega_{+}$ centred at $0$, then the extremal problems are equivalent in any sense.
\end{example}

We make some simple observations. Firstly, the equality $\mathcal{F}(\overline{\Omega_{+}}, \overline{\Omega_{-}}) = \mathcal{F}^{*}(\overline{\Omega_{+}}, \overline{\Omega_{-}})$ is easily seen to be true, always. In particular, if $\Omega_{+}$ and $\Omega_{-}$ are closed sets, then the extremal problems are equivalent in any sense. Another observation is that if $f \in \mathcal{F}(\Omega_{+}, \Omega_{-})$, then taking into account the continuity of $f$ we have $f^{-1}(0, \infty) \subset \Int \Omega_{+} \subset \Omega_{+}$, and hence $\supp f_{+} \subset \overline{\Int{\Omega_{+}}}$. Similarly, $f^{-1}(- \infty, 0) \subset  \Int{\Omega_{-}} \subset \Omega_{-}$, and hence $\supp f_{-} \subset \overline{\Int{\Omega_{-}}}$. Thus, we have the following containments:

\begin{equation}\label{containments}
\begin{split}
\mathcal{F}^{*}(\Omega_{+}, \Omega_{-}) \subseteq \mathcal{F}(\Omega_{+}, \Omega_{-}) &= \mathcal{F}(\Int \Omega_{+}, \Int \Omega_{-})\\
& \subseteq \mathcal{F}(\overline{\Int \Omega_{+}}, \overline{\Int \Omega_{-}})\\
& = \mathcal{F}^{*}(\overline{\Int \Omega_{+}}, \overline{\Int \Omega_{-}})\\
& \subseteq \mathcal{F}^{*}(\overline{\Omega_{+}}, \overline{\Omega_{-}}) =  \mathcal{F}(\overline{\Omega_{+}}, \overline{\Omega_{-}}),
\end{split}
\end{equation}
and the corresponding relations between the extremal values:
\begin{equation}\label{inequalities}
\begin{split}
\mathcal{C}^{*}(\Omega_{+}, \Omega_{-}) \le \mathcal{C}(\Omega_{+}, \Omega_{-}) &=  \mathcal{C}(\Int \Omega_{+}, \Int \Omega_{-})\\
& \le \mathcal{C}(\overline{\Int{\Omega}_{+}}, \overline{\Int \Omega_{-} })\\
& = \mathcal{C}^{*}(\overline{\Int\Omega_{+}}, \overline{\Int \Omega_{-}})\\
&  \le \mathcal{C}^{*}(\overline{\Omega_{+}}, \overline{\Omega_{-}}) = \mathcal{C}(\overline{\Omega_{+}}, \overline{\Omega_{-}}).
\end{split}
\end{equation}

In some situations, the above containments and inequalities are equalities. This can easily be shown if $\Omega_{+}$ and $\Omega_{-}$ are closed. The next proposition shows that equality holds when $\Omega_{+}$ and $\Omega_{-}$ are boundary-coherent.

\begin{proposition}\label{equalconstants}
	Let $G$ be a LCA group and $\Omega_{+}, \Omega_{-}$ be boundary-coherent symmetric subsets of $G$ with $\Omega_{+}$ a neighbourhood of $0$. Then $\mathcal{F}^{*}(\overline{\Omega_{+}}, \overline{\Omega_{-}}) = \mathcal{F}(\Omega_{+}, \Omega_{-})$.
\end{proposition}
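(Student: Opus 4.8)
The plan is to prove the two inclusions separately, noting that one of them is already available for free. The inclusion $\mathcal{F}(\Omega_{+}, \Omega_{-}) \subseteq \mathcal{F}^{*}(\overline{\Omega_{+}}, \overline{\Omega_{-}})$ requires no new work: it is exactly the composite of the containments recorded in \eqref{containments}, every link of which holds for arbitrary sets and makes no use of boundary-coherence. Hence the entire burden of the proof falls on the reverse inclusion $\mathcal{F}^{*}(\overline{\Omega_{+}}, \overline{\Omega_{-}}) \subseteq \mathcal{F}(\Omega_{+}, \Omega_{-})$, and this is precisely where boundary-coherence enters.

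The engine of the argument will be a purely topological observation: if $\Omega$ is boundary-coherent and $U$ is an open set with $U \subseteq \overline{\Omega}$, then in fact $U \subseteq \Omega$ (indeed $U \subseteq \Int\Omega$). I would prove this by contradiction. Suppose some $x \in U$ lies in $\partial\Omega$. Since $\Omega$ is boundary-coherent, $x \in \overline{\ext\Omega}$, so the neighbourhood $U$ of $x$ must meet $\ext\Omega$; choose $y \in U \cap \ext\Omega$. Then $y \in U \subseteq \overline{\Omega}$, while $y \in \ext\Omega = G \setminus \overline{\Omega}$, a contradiction. Thus $U \cap \partial\Omega = \emptyset$, and since $U \subseteq \overline{\Omega} = \Int\Omega \cup \partial\Omega$, we conclude $U \subseteq \Int\Omega \subseteq \Omega$.

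With this in hand the proof finishes quickly. Take $f \in \mathcal{F}^{*}(\overline{\Omega_{+}}, \overline{\Omega_{-}})$. Because $f$ is continuous, the set $U := f^{-1}(0,\infty)$ is open, and clearly $U \subseteq \{g : f_{+}(g) \ne 0\} \subseteq \supp f_{+} \subseteq \overline{\Omega_{+}}$. Applying the observation with $\Omega = \Omega_{+}$ yields $f^{-1}(0,\infty) \subseteq \Omega_{+}$. The identical argument applied to the open set $f^{-1}(-\infty,0) \subseteq \supp f_{-} \subseteq \overline{\Omega_{-}}$ with $\Omega = \Omega_{-}$ gives $f^{-1}(-\infty,0) \subseteq \Omega_{-}$. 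Since $f$ already lies in $P_{1}(G) \cap L^{1}(G)$, these two inclusions are exactly the defining conditions of $\mathcal{F}(\Omega_{+}, \Omega_{-})$, so $f \in \mathcal{F}(\Omega_{+}, \Omega_{-})$, establishing the reverse inclusion and hence the claimed equality.

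The only genuinely delicate point is the topological observation of the second paragraph, and this is exactly where boundary-coherence is indispensable: without it an open subset of $\overline{\Omega}$ could contain boundary points of $\Omega$, and the reverse inclusion would fail — this is illustrated by the non-boundary-coherent set $\Omega = (-2,-1)\cup(-1,1)\cup(1,2)$ of the earlier example, whose relevant extremal function has positive values sitting on boundary points of $\Omega$. The hypotheses of symmetry and of $\Omega_{+}$ being a neighbourhood of $0$ are not actually needed for this particular equality and are carried along only for uniformity with the surrounding statements.
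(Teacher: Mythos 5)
Your proof is correct, and it reaches the conclusion by a genuinely different mechanism than the paper, so a comparison is worthwhile. The paper makes the same reduction to the inclusion $\mathcal{F}^{*}(\overline{\Omega_{+}}, \overline{\Omega_{-}}) \subseteq \mathcal{F}(\Omega_{+}, \Omega_{-})$, but then argues pointwise on the boundary: for $x \in \partial\Omega_{+}$ and arbitrary $\varepsilon > 0$ it takes a neighbourhood $V$ of $x$ on which $f \ge f(x) - \varepsilon$, uses boundary-coherence to produce a non-empty open set $U \subseteq V$ lying in $\ext\Omega_{+}$, observes that $f \le 0$ on $\ext\Omega_{+}$ because $\supp f_{+} \subseteq \overline{\Omega_{+}}$, and concludes $f(x) \le \varepsilon$ for every $\varepsilon$, hence $f \le 0$ on all of $\partial\Omega_{+}$ (and symmetrically $f \ge 0$ on $\partial\Omega_{-}$). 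You instead isolate a purely topological lemma --- for a boundary-coherent set $\Omega$, any open $U \subseteq \overline{\Omega}$ is disjoint from $\partial\Omega$ and hence contained in $\Int\Omega$ --- and apply it to the open set $f^{-1}(0,\infty) \subseteq \supp f_{+} \subseteq \overline{\Omega_{+}}$. Your route is more economical: continuity enters only through openness of the preimage, no $\varepsilon$-estimate is needed, the contradiction ($y \in U \subseteq \overline{\Omega}$ versus $y \in \ext\Omega = G \setminus \overline{\Omega}$) never even looks at the values of $f$ on the exterior, and the lemma lays bare exactly what boundary-coherence is for. What the paper's pointwise version buys is slightly more information (the sign condition $f \le 0$ at \emph{every} point of $\partial\Omega_{+}$) and, more importantly, a form of argument that transfers to the almost-everywhere situation inside the proof of Theorem \ref{sigmacompactcase}, where one only knows $f \le 0$ a.e.\ on $\ext\Omega_{+}$ and the positivity set is not a priori contained in $\overline{\Omega_{+}}$; to use your lemma there one would first need an a.e.-to-everywhere upgrade via continuity and the positivity of Haar measure on non-empty open sets. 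Finally, your closing remark is accurate and consistent with the paper: its proof likewise never invokes symmetry or the hypothesis that $\Omega_{+}$ is a neighbourhood of $0$; these are carried only for uniformity with the surrounding statements.
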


\begin{proof}
	
	Note that  $\mathcal{F}(\Omega_{+}, \Omega_{-}) \subset \mathcal{F}^{*}(\overline{\Omega_{+}}, \overline{\Omega_{-}})$ is always true. So, it is enough to show that \linebreak $\mathcal{F}^{*}(\overline{\Omega_{+}}, \overline{\Omega_{-}}) \subseteq \mathcal{F}(\Omega_{+}, \Omega_{-})$. Let $f \in \mathcal{F}^{*}(\overline{\Omega_{+}}, \overline{\Omega_{-}})$. The aim is to show that $f \in \mathcal{F}(\Omega_{+}, \Omega_{-})$. In other words, the aim is to show that $f^{-1}(0,\infty) \subset \Omega_{+}$ and $f^{-1}(- \infty, 0) \subset \Omega_{-}$. Now, on account of $\overline{\Omega_{+}} = \Omega_{+} \cup \partial{\Omega_{+}}$ and $\overline{\Omega_{-}} = \Omega_{-} \cup \partial{\Omega_{-}}$, it suffices to show that $f(x)\le 0$ for all $x \in \partial{\Omega_{+}}$ and that $f(x) \ge 0$ for all $x \in \partial{\Omega_{-}}$. To that end, let $x \in \partial \Omega_{+}$. Pick $\varepsilon > 0$ arbitrarily, and use the continuity of $f$ to obtain an open neighbhourhood $V$ of $x$ such that $f(y) \ge f(x) - \varepsilon$ for all $y \in V$. Since $x$ is in the boundary of $\ext \Omega_{+}$, there is an open set $U$ contained in $V$ that is fully contained in $\ext \Omega_{+}$. But $y \in \ext \Omega_{+}$ implies that $f(y) \le 0$. Hence $0 \ge f(y) \ge f(x) - \varepsilon$ for all $y \in U$. Since $\varepsilon$ is arbitrary, it follows that $f(x) \le 0$. Similarly, $f(x) \ge 0$ for all $x \in \partial \Omega_{-}$.

\end{proof}

\begin{remark} In the course of the proof of Theorem \ref{sigmacompactcase} we argued similarly to get that the limit function $f$ is in $ \mathcal{F}(\Omega_{+}, \Omega_{-})$. There it  is easy to see that $f\in \mathcal{F}^{*}(\overline{\Omega_{+}}, \overline{\Omega_{-}})$. So, the above Proposition can replace the argument there, showing $f \in \mathcal{F}(\Omega_{+}, \Omega_{-})$ under the condition of boundary-coherence.
\end{remark}

\begin{corollary} \label{equivalence}
Under the conditions of Proposition \ref{equalconstants}, the following equalities hold:
	\begin{equation*}
	\mathcal{F}(\Omega_{+}, \Omega_{-}) = \mathcal{F}(\Int \Omega_{+}, \Int \Omega_{-}) = \mathcal{F}(\overline{\Int \Omega_{+}}, \overline{\Int \Omega_{-}}) = \mathcal{F}^{*}(\overline{\Int \Omega_{+}}, \overline{\Int \Omega_{-}}) = \mathcal{F}^{*}(\overline{\Omega_{+}}, \overline{\Omega_{-}}).
	\end{equation*}
\end{corollary}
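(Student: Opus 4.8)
The plan is to run a squeeze argument using the chain of inclusions already recorded in \eqref{containments} together with Proposition~\ref{equalconstants}. First I would recall that \eqref{containments} holds for arbitrary subsets, with no hypothesis beyond the continuity built into the definition of the function classes; in particular it furnishes the chain
\begin{equation*}
\mathcal{F}(\Omega_{+}, \Omega_{-}) = \mathcal{F}(\Int \Omega_{+}, \Int \Omega_{-}) \subseteq \mathcal{F}(\overline{\Int \Omega_{+}}, \overline{\Int \Omega_{-}}) = \mathcal{F}^{*}(\overline{\Int \Omega_{+}}, \overline{\Int \Omega_{-}}) \subseteq \mathcal{F}^{*}(\overline{\Omega_{+}}, \overline{\Omega_{-}}),
\end{equation*}
which already establishes every inclusion ``$\subseteq$'' among the five sets appearing in the statement.

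Next I would invoke the boundary-coherence hypothesis. Under the conditions of Proposition~\ref{equalconstants} that result supplies the one additional equality linking the two ends of this chain, namely $\mathcal{F}^{*}(\overline{\Omega_{+}}, \overline{\Omega_{-}}) = \mathcal{F}(\Omega_{+}, \Omega_{-})$. Since the leftmost and rightmost sets of the chain now coincide, every intermediate inclusion is forced to be an equality: a chain $A \subseteq B \subseteq C$ with $A = C$ collapses to $A = B = C$. Spelling this out across the five sets at hand yields all the claimed equalities simultaneously.

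The only point requiring care---there is no genuine obstacle---is to match the endpoints of the chain from \eqref{containments} with the two sets equated in Proposition~\ref{equalconstants}, so that the squeeze closes exactly. All the substantive topological content, in particular the use of boundary-coherence to propagate the support conditions from the closures $\overline{\Omega_{\pm}}$ down to the sets $\Omega_{\pm}$ themselves, has already been discharged in the proof of Proposition~\ref{equalconstants}. Consequently the present corollary is a purely formal consequence of that proposition and the earlier containments, and requires no new ideas.
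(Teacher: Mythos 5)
Your proposal is correct and is exactly the argument the paper intends: the corollary is stated without proof precisely because it follows by closing the chain of containments \eqref{containments} with the equality $\mathcal{F}^{*}(\overline{\Omega_{+}}, \overline{\Omega_{-}}) = \mathcal{F}(\Omega_{+}, \Omega_{-})$ from Proposition \ref{equalconstants}, forcing every intermediate inclusion to collapse to an equality. Your squeeze argument matches the endpoints correctly, so nothing is missing.
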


We also have equality of the extremal constants.

\begin{corollary} \label{equivalence2}
Under the conditions of Proposition \ref{equalconstants}, the following equalities hold:
	\begin{equation*}
	\mathcal{C}(\Omega_{+}, \Omega_{-}) = \mathcal{C}(\Int \Omega_{+}, \Int \Omega_{-}) = \mathcal{C}(\overline{\Int{\Omega}_{+}}, \overline{\Int \Omega_{-} }) = \mathcal{C}^{*}(\overline{\Int\Omega_{+}}, \overline{\Int \Omega_{-}}) = \mathcal{C}^{*}(\overline{\Omega_{+}}, \overline{\Omega_{-}}).
	\end{equation*}
\end{corollary}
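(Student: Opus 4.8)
The plan is to deduce the statement directly from Corollary \ref{equivalence}, which we may assume. The key observation is that each of the five extremal constants in the claimed chain of equalities is, by its very definition, the supremum of one and the same functional, namely $f \mapsto \int_{G} f \, \textup{d}\lambda_{G}$, taken over the corresponding function class. Explicitly, $\mathcal{C}(\Omega_{+}, \Omega_{-}) = \sup_{f \in \mathcal{F}(\Omega_{+}, \Omega_{-})} \int_{G} f \, \textup{d}\lambda_{G}$ by \eqref{extremalproblem}, and likewise $\mathcal{C}^{*}(\overline{\Omega_{+}}, \overline{\Omega_{-}}) = \sup_{f \in \mathcal{F}^{*}(\overline{\Omega_{+}}, \overline{\Omega_{-}})} \int_{G} f \, \textup{d}\lambda_{G}$ by \eqref{supportedproblem}, with analogous expressions for the three intermediate constants, the $\mathcal{C}$-constants ranging over the appropriate $\mathcal{F}$-classes and the $\mathcal{C}^{*}$-constants over the appropriate $\mathcal{F}^{*}$-classes.

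First I would invoke Corollary \ref{equivalence}, which asserts, under the present hypotheses, the equality of the five function classes
\[
\mathcal{F}(\Omega_{+}, \Omega_{-}) = \mathcal{F}(\Int \Omega_{+}, \Int \Omega_{-}) = \mathcal{F}(\overline{\Int \Omega_{+}}, \overline{\Int \Omega_{-}}) = \mathcal{F}^{*}(\overline{\Int \Omega_{+}}, \overline{\Int \Omega_{-}}) = \mathcal{F}^{*}(\overline{\Omega_{+}}, \overline{\Omega_{-}}).
\]
Since the supremum of a fixed functional over a fixed set depends only on that set, and the five index sets above are literally identical, the resulting suprema coincide. This yields precisely the asserted chain of equalities of the extremal constants, read off position by position from the chain of equalities of the classes.

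There is essentially no obstacle to overcome, as the entire analytic and topological content has already been absorbed into establishing the equality of the function classes, namely Proposition \ref{equalconstants} together with the elementary containments \eqref{containments}. The only point worth flagging is the degenerate case: if the common function class is empty, then by the stated convention every one of the five extremal values equals $0$, so the identity persists. Thus the corollary follows immediately from the equality of function classes, with no further argument required beyond matching the defining suprema to their index sets.
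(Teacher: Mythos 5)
Your proof is correct and takes essentially the same route as the paper, which derives Corollary \ref{equivalence2} immediately from the equality of the function classes in Corollary \ref{equivalence}: since all five constants are suprema of the same functional $f \mapsto \int_{G} f\,\textup{d}\lambda_{G}$ over classes that coincide, the constants coincide. Your extra remark about the empty-class convention is a harmless (and valid) bit of thoroughness.
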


\begin{example}
	
	Consider the Tur\'an problem for $\Omega = (-2, -1) \cup (-1, 1) \cup (1, 2)$. Observe that $\Omega$ is not boundary-coherent. By \cite[Theorem 7]{kolrev}, the extremal constant is $\mathcal{T}_{\mathbb{R}}(\Omega) = 1$, which is attained by the function $f = \mathbf{1} _{\left [-\frac{1}{2},\frac{1}{2}\right]} \ast \mathbf{1} _{\left [-\frac{1}{2},\frac{1}{2}\right]}$ appearing on the left of Figure \ref{extfunctions2}. On the other hand, $\overline{\Omega} = [-2,2]$, and the value of the extremal constant is   $\mathcal{T}_{\mathbb{R}}([-2,2]) = 2$, which is attained by the function $g = \frac{1}{2}\left(\mathbf{1}_{\left[-1,1\right]} \ast \mathbf{1}_{\left[-1,1\right]}\right)$  appearing on the right of Figure \ref{extfunctions2}. In particular, $\mathcal{T}_{\mathbb{R}}(\Omega) \ne \mathcal{T}_{\mathbb{R}}(\overline{\Omega})$.
	\begin{figure}[h]
		\begin{center}
		\begin{tikzpicture}
		\begin{axis}[scale = 0.5, ymajorticks=true, ytick = {1}, xtick ={-2,-1,1,2}, clip = false, xmin= -2.5, ymin = 0, xmax = 2.5, ymax = 1.25, axis lines=middle, xlabel = $x$, ylabel = $y$
		]
		\addplot[domain = 0:1]{(1-x)};
		\addplot[domain = -1:0]{(1+x)};
		\draw[very thick] (-2,0) -- (2,0);
		\addplot+[only marks,mark=*,mark options={scale=1, fill=white},text mark as node=true] coordinates {
			(-2,0) (-1,0) (1,0) (2,0)};
		\end{axis}
		\end{tikzpicture}
		\hskip 5pt
		\begin{tikzpicture}
		\begin{axis}[scale = 0.5, ymajorticks=true, ytick = {1},xtick ={-2,-1,1,2}, clip = false, xmin= -2.5, ymin = 0, xmax = 2.5, ymax = 1.25, axis lines=middle, xlabel = $x$, ylabel = $y$
		]
		\addplot[domain = 0:2]{(1-x/2)};
		\addplot[domain = -2:0]{(1+x/2)};
		\draw[very thick] (-2,0) -- (2,0);
		\end{axis}
		\end{tikzpicture}
		\end{center}
		\caption{Extremal functions for $\Omega$ and for $\overline{\Omega}$.}
		\label{extfunctions2}
	\end{figure}
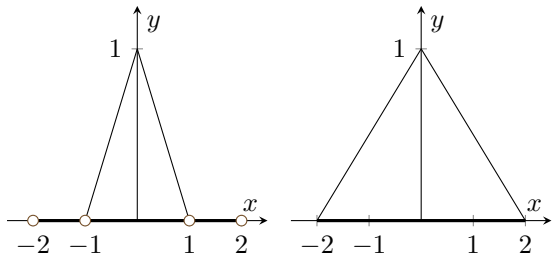

\end{example}

Notice that in Corollary \ref{equivalence} the set $\mathcal{F}^{*}(\Omega_{+}, \Omega_{-})$ is missing. Indeed, Example \ref{turanexample} demonstrates that this should be the case and that we can not do better under these conditions. On the other hand, this does not exclude, as demonstrated by Example \ref{turanexample}, the possibility that under the same conditions the term $\mathcal{C}^{*}(\Omega_{+}, \Omega_{-})$ should feature in the equalities in Corollary \ref{equivalence2}. We consider this problem separately. Our point of departure is the following proposition.

\begin{proposition}[See {\cite[ Proposition 3.1]{elena-szilard}}]\label{centrally-symmetric}
	Let $\Omega_{+}, \Omega_{-} \subset \mathbb{R}^d$ be $0$-symmetric convex sets with $\Omega_{+}$ a neighbourhood of $0$. Then $\mathcal{C}^{*}(\Int \Omega_{+}, \Int \Omega_{-}) = \mathcal{C}^{*}(\overline{\Omega_{+}}, \overline{\Omega_{-}})$.
\end{proposition}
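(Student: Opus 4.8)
The plan is to prove the two inequalities separately. Since $\Int\Omega_{\pm}\subset\overline{\Omega_{\pm}}$, monotonicity of the function class in its defining sets gives $\mathcal{F}^{*}(\Int\Omega_{+},\Int\Omega_{-})\subset\mathcal{F}^{*}(\overline{\Omega_{+}},\overline{\Omega_{-}})$ and hence the trivial inequality $\mathcal{C}^{*}(\Int\Omega_{+},\Int\Omega_{-})\le\mathcal{C}^{*}(\overline{\Omega_{+}},\overline{\Omega_{-}})$. The substance is the reverse inequality, and for it I would use a dilation argument. Fix an arbitrary $f\in\mathcal{F}^{*}(\overline{\Omega_{+}},\overline{\Omega_{-}})$ and, for each $\lambda>1$, set $g_{\lambda}(x):=f(\lambda x)$. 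Because $x\mapsto\lambda x$ is an automorphism of $\mathbb{R}^{d}$, the function $g_{\lambda}$ is again continuous, positive definite, integrable, and normalised by $g_{\lambda}(0)=f(0)=1$; moreover $\int_{\mathbb{R}^{d}}g_{\lambda}\,\textup{d}\lambda_{\mathbb{R}^{d}}=\lambda^{-d}\int_{\mathbb{R}^{d}}f\,\textup{d}\lambda_{\mathbb{R}^{d}}$. Since $(g_{\lambda})_{\pm}=f_{\pm}(\lambda\,\cdot)$, the supports contract: $\supp(g_{\lambda})_{\pm}=\tfrac{1}{\lambda}\supp f_{\pm}$.

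The key step is the elementary convex-geometry fact that if $K\subset\mathbb{R}^{d}$ is convex with $0\in\Int K$, then $\mu\overline{K}\subset\Int K$ for every $\mu\in(0,1)$ (proved from the standard statement that the half-open segment from an interior point to a closure point lies in the interior). Applying this with $\mu=1/\lambda$ to $K=\Omega_{+}$, which is a neighbourhood of $0$, yields $\supp(g_{\lambda})_{+}=\tfrac{1}{\lambda}\supp f_{+}\subset\tfrac{1}{\lambda}\overline{\Omega_{+}}\subset\Int\Omega_{+}$. For $\Omega_{-}$ I would separate two cases. If $\Int\Omega_{-}\neq\emptyset$, then, being nonempty $0$-symmetric and convex, $\Omega_{-}$ must have $0$ in its interior (if $0$ were a boundary point, a supporting hyperplane at $0$ together with $\Omega_{-}=-\Omega_{-}$ would force $\Omega_{-}$ into that hyperplane, contradicting the nonempty interior), so the same lemma gives $\supp(g_{\lambda})_{-}\subset\Int\Omega_{-}$. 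If instead $\Int\Omega_{-}=\emptyset$, then $\overline{\Omega_{-}}$ lies in a proper subspace and so has Lebesgue measure zero; since $\{f_{-}>0\}$ is open and contained in $\supp f_{-}\subset\overline{\Omega_{-}}$, it must be empty, i.e.\ $f\ge 0$, whence $\supp(g_{\lambda})_{-}=\emptyset\subset\Int\Omega_{-}$ trivially. In either case $g_{\lambda}\in\mathcal{F}^{*}(\Int\Omega_{+},\Int\Omega_{-})$.

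It then follows that $\lambda^{-d}\int_{\mathbb{R}^{d}}f\,\textup{d}\lambda_{\mathbb{R}^{d}}=\int_{\mathbb{R}^{d}}g_{\lambda}\,\textup{d}\lambda_{\mathbb{R}^{d}}\le\mathcal{C}^{*}(\Int\Omega_{+},\Int\Omega_{-})$ for every $\lambda>1$; letting $\lambda\to1^{+}$ gives $\int_{\mathbb{R}^{d}}f\,\textup{d}\lambda_{\mathbb{R}^{d}}\le\mathcal{C}^{*}(\Int\Omega_{+},\Int\Omega_{-})$, and taking the supremum over $f\in\mathcal{F}^{*}(\overline{\Omega_{+}},\overline{\Omega_{-}})$ yields $\mathcal{C}^{*}(\overline{\Omega_{+}},\overline{\Omega_{-}})\le\mathcal{C}^{*}(\Int\Omega_{+},\Int\Omega_{-})$, completing the proof. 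The routine verifications (preservation of positive definiteness and integrability under dilation, and the scaling of the integral) are immediate; I expect the main obstacle to be the correct handling of $\Omega_{-}$, specifically recognising that $0$-symmetry is exactly what guarantees $0\in\Int\Omega_{-}$ in the non-degenerate case and isolating the degenerate case where the negative part is forced to vanish.
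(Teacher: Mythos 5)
Your proof is correct, and its core idea --- shrinking the support by the dilation $g_{\lambda}(x)=f(\lambda x)$, $\lambda>1$, so that $\supp (g_\lambda)_{\pm}\subset\Int\Omega_{\pm}$, then letting $\lambda\to 1^{+}$ --- is exactly the mechanism of the paper's own treatment. (The paper does not reprove this Proposition; it cites it and then proves the generalisation, Theorem \ref{starbodytheorem}, by the same dilation device.) The differences are in the supporting machinery, and they are worth noting. First, the paper's proof works in a general LCA group that is a TVS, where Haar measure has no usable scaling law under $x\mapsto Rx$; it therefore first reduces to compactly supported competitors via Lemma \ref{equileamma} ($\mathcal{C}^{*}_{c}=\mathcal{C}^{*}$) and then invokes dominated convergence on the compact set $[1-\delta,1]\cdot\supp f$ to get $\int h_R \to \int f$. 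You instead exploit the exact Jacobian identity $\int_{\mathbb{R}^d} f(\lambda x)\,\textup{d}x=\lambda^{-d}\int_{\mathbb{R}^d} f\,\textup{d}x$, which is special to $\mathbb{R}^d$ but eliminates both the compact-support reduction and the limit-interchange argument --- a genuinely more elementary route in this setting. Second, the paper's Theorem \ref{starbodytheorem} simply \emph{assumes} the star-shape condition $r\overline{\Omega_{\pm}}\subset\Int\Omega_{\pm}$ and boundedness, whereas you \emph{derive} the needed containment from convexity: for $\Omega_{+}$ from $0\in\Int\Omega_{+}$ via the standard segment lemma, and for $\Omega_{-}$ via the correct dichotomy --- either $\Int\Omega_{-}\neq\emptyset$, in which case $0$-symmetry plus a supporting hyperplane at $0$ forces $0\in\Int\Omega_{-}$ and the segment lemma applies, or $\Int\Omega_{-}=\emptyset$, in which case $\overline{\Omega_{-}}$ is Lebesgue-null and continuity forces $f\ge 0$. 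This case analysis is not cosmetic: the Proposition places no interiority, star-shape, or boundedness hypothesis on $\Omega_{-}$, and since the segment lemma needs no boundedness, your argument covers unbounded convex sets as well, i.e.\ the full stated generality, which the boundedness hypothesis of Theorem \ref{starbodytheorem} does not literally reach. In short: same dilation idea, but your proof trades the group-theoretic generality of the paper's argument for a self-contained, sharper treatment of the convex Euclidean case.
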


This allows us to extend the equalities in Corollary \ref{equivalence2} as follows.

\begin{corollary}\label{star-like}
	If $\Omega_{+}, \Omega_{-} \subset \mathbb{R}^d$ are $0$-symmetric convex sets with $\Omega_{+}$ a neighbourhood of $0$, then the following equalities hold:
	\begin{equation*}
	\begin{split}
	\mathcal{C}^{\ast}(\Int \Omega_{+}, \Int \Omega_{-}) = \mathcal{C}^{*}(\Omega_{+}, \Omega_{-})  = \mathcal{C}(\Omega_{+}, \Omega_{-}) & = \mathcal{C}(\Int \Omega_{+}, \Int \Omega_{-}) = \mathcal{C}(\overline{\Int{\Omega}_{+}}, \overline{\Int \Omega_{-} }) \\ &= \mathcal{C}^{*}(\overline{\Int\Omega_{+}}, \overline{\Int \Omega_{-}})= \mathcal{C}^{*}(\overline{\Omega_{+}}, \overline{\Omega_{-}}).
	\end{split}
	\end{equation*}
	
\end{corollary}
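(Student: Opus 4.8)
The plan is to obtain the whole chain by combining the equalities already furnished by Corollary \ref{equivalence2} with Proposition \ref{centrally-symmetric}; the only genuinely new ingredient is the observation that a $0$-symmetric convex set is automatically boundary-coherent, which is exactly what is needed to make Corollary \ref{equivalence2} applicable here.

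First I would verify boundary-coherence for convex sets, treating two cases. If a convex $\Omega$ has empty interior in $\mathbb{R}^d$, then it is contained in a proper affine subspace, so $\overline{\Omega}$ is a closed set with empty interior; its complement $\ext \Omega$ is then open and dense, whence $\partial \Omega \subset \mathbb{R}^d = \overline{\ext \Omega}$ trivially. If instead $\Omega$ has nonempty interior, fix $x_0 \in \Int \Omega$ and take $x \in \partial \Omega$. For $t > 1$ the point $x_t := x_0 + t(x - x_0)$ cannot lie in $\overline{\Omega}$: otherwise, writing $x = (1 - 1/t)x_0 + (1/t)x_t$ with $1/t \in (0,1)$, the standard convexity fact that the segment from an interior point to a point of $\overline{\Omega}$ lies in $\Int \Omega$ (away from the far endpoint) would force $x \in \Int \Omega$, contradicting $x \in \partial \Omega$. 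Letting $t \to 1^{+}$ gives $x \in \overline{\ext \Omega}$. Since $\Omega_{+}$ is a neighbourhood of $0$ it has nonempty interior, and $\Omega_{-}$ falls into one of the two cases, so both are boundary-coherent. Corollary \ref{equivalence2} then yields
$$\mathcal{C}(\Omega_{+}, \Omega_{-}) = \mathcal{C}(\Int \Omega_{+}, \Int \Omega_{-}) = \mathcal{C}(\overline{\Int{\Omega}_{+}}, \overline{\Int \Omega_{-} }) = \mathcal{C}^{*}(\overline{\Int\Omega_{+}}, \overline{\Int \Omega_{-}}) = \mathcal{C}^{*}(\overline{\Omega_{+}}, \overline{\Omega_{-}}),$$
which is the subchain involving all terms except the two starred constants $\mathcal{C}^{*}(\Int \Omega_{+}, \Int \Omega_{-})$ and $\mathcal{C}^{*}(\Omega_{+}, \Omega_{-})$.

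To splice in these two remaining constants I would use that $\mathcal{C}^{*}$ is monotone under set inclusion, since enlarging $\Omega_{\pm}$ enlarges the admissible class $\mathcal{F}^{*}(\Omega_{+}, \Omega_{-})$. From the inclusions $\Int \Omega_{\pm} \subset \Omega_{\pm} \subset \overline{\Omega_{\pm}}$ this gives the sandwich
$$\mathcal{C}^{*}(\Int \Omega_{+}, \Int \Omega_{-}) \le \mathcal{C}^{*}(\Omega_{+}, \Omega_{-}) \le \mathcal{C}^{*}(\overline{\Omega_{+}}, \overline{\Omega_{-}}).$$
Proposition \ref{centrally-symmetric} asserts that the two outer terms are equal, so all three coincide; combining this with the already established equality $\mathcal{C}^{*}(\overline{\Omega_{+}}, \overline{\Omega_{-}}) = \mathcal{C}(\Omega_{+}, \Omega_{-})$ from the previous display ties the starred constants into the chain, giving the full list of equalities.

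The routine parts here (monotonicity of $\mathcal{C}^{*}$ and the elementary inclusions) are immediate. I expect the only step requiring genuine care to be the boundary-coherence verification, and within it the degenerate situation where $\Omega_{-}$ is lower-dimensional; once that is settled the result follows by a direct appeal to the two cited statements together with the two-line sandwich argument.
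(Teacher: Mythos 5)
Your proposal is correct and follows essentially the same route as the paper: apply Corollary \ref{equivalence2} (via boundary-coherence of convex sets) to get the unstarred chain, then splice in $\mathcal{C}^{*}(\Int \Omega_{+}, \Int \Omega_{-})$ and $\mathcal{C}^{*}(\Omega_{+}, \Omega_{-})$ using Proposition \ref{centrally-symmetric} together with a monotonicity sandwich. The only difference is cosmetic --- the paper bounds $\mathcal{C}^{*}(\Omega_{+}, \Omega_{-}) \le \mathcal{C}(\Omega_{+}, \Omega_{-})$ while you bound $\mathcal{C}^{*}(\Omega_{+}, \Omega_{-}) \le \mathcal{C}^{*}(\overline{\Omega_{+}}, \overline{\Omega_{-}})$ --- and your explicit two-case verification that convex sets are boundary-coherent (including the degenerate lower-dimensional case for $\Omega_{-}$) is a welcome filling-in of a step the paper merely asserts.
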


\begin{proof}
	Since convex sets are boundary-coherent, Corollary \ref{equivalence2} applies to give
	\begin{equation}\label{chain}
	\mathcal{C}(\Omega_{+}, \Omega_{-}) = \mathcal{C}(\Int \Omega_{+}, \Int \Omega_{-}) = \mathcal{C}(\overline{\Int{\Omega}_{+}}, \overline{\Int \Omega_{-} }) = \mathcal{C}^{*}(\overline{\Int\Omega_{+}}, \overline{\Int \Omega_{-}}) = \mathcal{C}^{*}(\overline{\Omega_{+}}, \overline{\Omega_{-}}).
	\end{equation}
	By Proposition \ref{centrally-symmetric}, we have
	\begin{equation}\label{convex}
	\mathcal{C}^{*}(\overline{\Omega_{+}}, \overline{\Omega_{-}}) = \mathcal{C}^{*}(\Int \Omega_{+}, \Int \Omega_{-}) \le \mathcal{C}^{\ast}(\Omega_{+}, \Omega_{-}) \le \mathcal{C}(\Omega_{+}, \Omega_{-}) = \mathcal{C}^{\ast}(\overline{\Omega_{+}}, \overline{\Omega_{-}}),
	\end{equation}
	and the corollary is proved.
\end{proof}

At this point, a natural goal is to generalise Proposition \ref{centrally-symmetric}, and, as a result, generalise Corollary~\ref{star-like}. Firstly, we introduce the following function class:
\begin{equation*}
\mathcal{F}^{*}_{c}(\Omega_{+}, \Omega_{-}) := \{f \in \mathcal{F}_{G}(\Omega_{+}, \Omega_{-}): \supp f \Subset G \}.
\end{equation*}
Define the corresponding extremal value:
\begin{equation*}
\mathcal{C}_{c}(\Omega_{+}, \Omega_{-}):= \sup _{f \in \mathcal{F}^{*}_{c}(\Omega_{+}, \Omega_{-})}\int_{G}f\mbox{d}\lambda_{G}.
\end{equation*}

Following \cite[Theorem 2.1]{elena-szilard}, we have the following lemma.

\begin{lemma}[See {\cite[Theorem 2.1]{elena-szilard}}] \label{equileamma}
	If $\Omega_{+}$ and $\Omega_{-}$ are $0$-symmetric subsets of a LCA group $G$ with $\Omega_{+}$ a neighbourhood of $0$, then $\mathcal{C}_{c}^{*}(\Omega_{+}, \Omega_{-}) = \mathcal{C}^{*}(\Omega_{+}, \Omega_{-})$.
\end{lemma}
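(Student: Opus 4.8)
The plan is to prove the two inequalities $\mathcal{C}^{*}_{c} \le \mathcal{C}^{*}$ and $\mathcal{C}^{*}_{c} \ge \mathcal{C}^{*}$ separately, the substantive content being a standard cut-off (approximation of unity) argument that reproduces the computation already carried out in the Proposition following Theorem~\ref{sigmacompactcase}. The hypotheses that $\Omega_{+}, \Omega_{-}$ are $0$-symmetric and $\Omega_{+}$ is a neighbourhood of $0$ are the standing assumptions under which the class is non-empty and the cited \cite[Theorem 2.1]{elena-szilard} applies; they play no further role in the cut-off step itself.

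The inequality $\mathcal{C}^{*}_{c}(\Omega_{+},\Omega_{-}) \le \mathcal{C}^{*}(\Omega_{+},\Omega_{-})$ is immediate: a compactly supported member of the admissible class is in particular a continuous, integrable, positive definite function obeying the support conditions, so $\mathcal{F}^{*}_{c}(\Omega_{+},\Omega_{-}) \subseteq \mathcal{F}^{*}(\Omega_{+},\Omega_{-})$, and passing to suprema gives the claim. For the reverse inequality I would fix $f \in \mathcal{F}^{*}(\Omega_{+},\Omega_{-})$ and $\varepsilon>0$. Since $f \in L^{1}(G)$, inner regularity lets me pick $K \Subset G$ with $\int_{G\setminus K}|f|\,\textup{d}\lambda_{G} < \varepsilon$, and Lemma~\ref{approx_unity} yields a continuous, compactly supported, positive definite $k$ with $k(0)=1$, $0 \le k \le 1$ and $k|_{K} > 1-\varepsilon$. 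Setting $g := fk$, the function $g$ is continuous, compactly supported, satisfies $g(0)=f(0)k(0)=1$, and is positive definite as a product of positive definite functions. Because $k \ge 0$ we have $\{g>0\}=\{f>0\}\cap\{k>0\}\subseteq\{f>0\}$, whence $\supp g_{+}\subseteq \supp f_{+}\subseteq \Omega_{+}$ and likewise $\supp g_{-}\subseteq \supp f_{-}\subseteq \Omega_{-}$, so that $g \in \mathcal{F}^{*}_{c}(\Omega_{+},\Omega_{-})$.

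Finally I would estimate, exactly as in the Proposition following Theorem~\ref{sigmacompactcase},
\begin{equation*}
\int_{G} |f-g|\,\textup{d}\lambda_{G} = \int_{G} |f|(1-k)\,\textup{d}\lambda_{G} \le \varepsilon\,\|f\|_{L^{1}(G)} + \int_{G\setminus K}|f|\,\textup{d}\lambda_{G} \le \varepsilon\bigl(\|f\|_{L^{1}(G)}+1\bigr),
\end{equation*}
so that $\int_{G} g\,\textup{d}\lambda_{G} \ge \int_{G} f\,\textup{d}\lambda_{G} - \varepsilon\bigl(\|f\|_{L^{1}(G)}+1\bigr)$, and hence $\mathcal{C}^{*}_{c} \ge \int_{G} f\,\textup{d}\lambda_{G} - \varepsilon\bigl(\|f\|_{L^{1}(G)}+1\bigr)$. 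Letting $\varepsilon \to 0$ and taking the supremum over $f \in \mathcal{F}^{*}(\Omega_{+},\Omega_{-})$ gives $\mathcal{C}^{*}_{c} \ge \mathcal{C}^{*}$, and the two inequalities together establish the lemma.

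I expect the only delicate points to be the two membership checks for the cut-off $g=fk$, namely that multiplication by the nonnegative factor $k$ preserves the sign/support conditions and that the product of two positive definite functions is again positive definite, together with the tail control supplied by $f \in L^{1}(G)$ that forces $\int_{G} g \to \int_{G} f$. None of these is a deep obstacle; the main care is bookkeeping, ensuring the support inclusions survive the multiplication and that $K$ can be chosen to absorb all but an $\varepsilon$-fraction of the $L^{1}$-mass of $f$.
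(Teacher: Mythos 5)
Your proof is correct and takes essentially the same route as the source the paper cites for this lemma (\cite[Theorem 2.1]{elena-szilard}) and as the paper's own Proposition following Theorem \ref{sigmacompactcase}: the trivial inclusion $\mathcal{F}^{*}_{c}(\Omega_{+},\Omega_{-})\subseteq\mathcal{F}^{*}(\Omega_{+},\Omega_{-})$ in one direction, and in the other a cut-off $g=fk$ with $k$ from the approximation-of-unity Lemma \ref{approx_unity}, using positive definiteness of products, preservation of the sign/support conditions by the nonnegative factor $k$, and the $L^{1}$ tail bound, with the correct quantifier order ($\varepsilon\to 0$ for fixed $f$ before taking the supremum). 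The only point worth flagging is notational: the paper's displayed definition of $\mathcal{F}^{*}_{c}$ is given via $\mathcal{F}_{G}$ rather than $\mathcal{F}^{*}_{G}$ (evidently a typo), and your reading --- compactly supported members of $\mathcal{F}^{*}_{G}$ --- is the intended one, under which both of your inequalities hold.
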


\begin{theorem} \label{starbodytheorem}
	Let $G$ be a LCA group which is also a topological vector space (TVS) over $\mathbb{R}$. Let $\Omega_{+}$ and $\Omega_{-}$ be bounded $0$-symmetric sets in $G$ such that $r\overline{\Omega_{+}} \subset \Int \Omega_{+} $ and $r \overline{\Omega_{-}} \subset \Int \Omega_{-}$ for all $r \in [0,1)$. Then $\mathcal{C}^{*}(\Int \Omega_{+}, \Int \Omega_{-}) = \mathcal{C}^{*}(\overline{\Omega_{+}}, \overline{\Omega_{-}})$.
\end{theorem}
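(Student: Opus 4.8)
The plan is to prove the nontrivial inequality $\mathcal{C}^{*}(\Int \Omega_{+}, \Int \Omega_{-}) \ge \mathcal{C}^{*}(\overline{\Omega_{+}}, \overline{\Omega_{-}})$, the reverse being immediate from the inclusion $\mathcal{F}^{*}(\Int \Omega_{+}, \Int \Omega_{-}) \subseteq \mathcal{F}^{*}(\overline{\Omega_{+}}, \overline{\Omega_{-}})$, which holds since $\Int \Omega_{\pm} \subseteq \overline{\Omega_{\pm}}$. The key device is dilation. For $r \in (0,1)$ let $\phi_{r}\colon G \to G$ be $\phi_{r}(x) = rx$; because $G$ is a TVS over $\mathbb{R}$, this is a topological automorphism of the LCA group $G$ with inverse $\phi_{r}^{-1}(x) = r^{-1}x$, and hence has a positive modulus $m(r)$ characterised by $\lambda_{G}(\phi_{r}(A)) = m(r)\,\lambda_{G}(A)$, equivalently $\int_{G} u(r^{-1}x)\,\textup{d}\lambda_{G}(x) = m(r)\int_{G} u\,\textup{d}\lambda_{G}$ for every $u \in L^{1}(G)$. (Note that taking $r=0$ in the hypothesis gives $\{0\} \subset \Int \Omega_{\pm}$, so $\Int \Omega_{\pm}$ are automatically neighbourhoods of $0$, consistent with our framework.)

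Given any $f \in \mathcal{F}^{*}(\overline{\Omega_{+}}, \overline{\Omega_{-}})$, I would set $g_{r} := f \circ \phi_{r}^{-1}$, that is $g_{r}(x) = f(r^{-1}x)$, and check that $g_{r} \in \mathcal{F}^{*}(\Int \Omega_{+}, \Int \Omega_{-})$. It is continuous and real-valued, satisfies $g_{r}(0) = f(0) = 1$, and is positive definite: substituting $y_{i} := r^{-1}x_{i}$ turns $\sum_{i,j} c_{i}\overline{c_{j}}\, g_{r}(x_{i}-x_{j})$ into $\sum_{i,j} c_{i}\overline{c_{j}}\, f(y_{i}-y_{j}) \ge 0$. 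It is integrable, since $\int_{G}|g_{r}|\,\textup{d}\lambda_{G} = m(r)\int_{G}|f|\,\textup{d}\lambda_{G} < \infty$. The crucial point is the support condition: from $(g_{r})_{\pm}(x) = f_{\pm}(r^{-1}x)$ and the fact that $\phi_{r}$ is a homeomorphism one gets $\supp (g_{r})_{\pm} = r\,\supp f_{\pm} \subseteq r\,\overline{\Omega_{\pm}}$, and the star-shapedness hypothesis $r\,\overline{\Omega_{\pm}} \subset \Int \Omega_{\pm}$ then yields $\supp (g_{r})_{\pm} \subset \Int \Omega_{\pm}$.

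By the modulus identity, $\int_{G} g_{r}\,\textup{d}\lambda_{G} = m(r)\int_{G} f\,\textup{d}\lambda_{G}$, so $\mathcal{C}^{*}(\Int \Omega_{+}, \Int \Omega_{-}) \ge m(r)\int_{G} f\,\textup{d}\lambda_{G}$ for every $r \in (0,1)$. Letting $r \to 1^{-}$ and then taking the supremum over $f$ gives the desired inequality, provided $m(r) \to 1$ as $r \to 1^{-}$. This is the main obstacle, and I would settle it by an elementary argument rather than by appealing to abstract continuity of the modulus: fix one $h \in C_{c}(G)$ with $\int_{G} h\,\textup{d}\lambda_{G} \ne 0$, so that $m(r) = \bigl(\int_{G} h(r^{-1}x)\,\textup{d}\lambda_{G}(x)\bigr)\big/\bigl(\int_{G} h\,\textup{d}\lambda_{G}\bigr)$. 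As $r \to 1$ one has $h(r^{-1}x) \to h(x)$ pointwise by joint continuity of scaling, and for $r$ ranging in $[1/2,2]$ all the functions $h(r^{-1}\cdot)$ are supported in the single compact set $\{sx : s \in [1/2,2],\, x \in \supp h\}$ (a continuous image of a compact set) and bounded by $\|h\|_{\infty}$; dominated convergence then gives $\int_{G} h(r^{-1}x)\,\textup{d}\lambda_{G}(x) \to \int_{G} h\,\textup{d}\lambda_{G}$, whence $m(r) \to 1$.

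A few words on the hypotheses. The $0$-symmetry and boundedness of $\Omega_{\pm}$ are the standing star-body assumptions; they are not invoked directly in the scaling step beyond making $r\overline{\Omega_{\pm}} \subset \Int \Omega_{\pm}$ the natural condition, and the case of an empty class is covered by the convention setting both constants to $0$ (and is consistent with the easy inequality). One could alternatively observe that a Hausdorff locally compact TVS over $\mathbb{R}$ is finite-dimensional, reducing to $G = \mathbb{R}^{d}$ with $m(r) = r^{d}$; but the modulus argument avoids this reduction and keeps the proof intrinsic, at the same time generalising Proposition \ref{centrally-symmetric} from the convex to the star-shaped setting.
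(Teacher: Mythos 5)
Your proof is correct, and its core device --- dilating $f$ so that the hypothesis $r\overline{\Omega_{\pm}} \subset \Int \Omega_{\pm}$ pushes the supports of the dilated function's positive and negative parts into $\Int \Omega_{\pm}$ --- is exactly the paper's (the paper writes $h_R(g) = f(Rg)$ with $R = 1/r > 1$, which is your $g_r$). Where you genuinely diverge is in the limiting step, i.e.\ in how one shows that the integral of the dilate converges to $\int_G f\,\textup{d}\lambda_G$. The paper first invokes Lemma \ref{equileamma} (restriction to the compactly supported subclass $\mathcal{F}^{*}_{c}$ does not change the extremal value), so that its competitor $f$ has compact support; then the dilated family is dominated by $\mathbf{1}_K$ with $K = [1-\delta,1]\cdot \supp f$ compact, and dominated convergence gives $\int_G h_R\,\textup{d}\lambda_G \to \int_G f\,\textup{d}\lambda_G$ as $R \to 1^{+}$. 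You avoid any reduction to compact support: since $x \mapsto rx$ is a topological automorphism of $G$, the exact change-of-variables identity $\int_G g_r\,\textup{d}\lambda_G = m(r)\int_G f\,\textup{d}\lambda_G$ holds with $m(r)$ the modulus, and the whole problem collapses to $m(r) \to 1$ as $r \to 1^{-}$, which you settle by dominated convergence applied to one fixed auxiliary $h \in C_c(G)$ --- essentially the same compactness argument the paper uses, but applied to a test function rather than to the competitor itself. Each route has its advantages: the paper's recycles an earlier lemma and needs no discussion of moduli of automorphisms, while yours is self-contained (no appeal to Lemma \ref{equileamma}, hence no need to verify its hypotheses), and the exact identity makes the convergence of integrals trivial rather than an approximation. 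Your subsidiary verifications --- positive definiteness of $g_r$ via the substitution $y_i = r^{-1}x_i$, the identity $\supp (g_r)_{\pm} = r\,\supp f_{\pm}$ from $\phi_r$ being a homeomorphism, integrability via the modulus, and the observation that $0 \in \Int\Omega_{\pm}$ follows from the $r=0$ case of the hypothesis --- are all sound, as is your remark that one could instead invoke Riesz's theorem to reduce to $G \cong \mathbb{R}^d$ with $m(r) = r^d$, a shortcut both you and the paper deliberately avoid.
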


\begin{proof}
	Clearly, $\mathcal{C}^{*}(\Int \Omega_{+}, \Int \Omega_{-}) \le \mathcal{C}^{*}(\overline{\Omega_{+}}, \overline{\Omega_{-}})$, so we only prove the reverse inequality. Let us take $\varepsilon >0$ and $f \in \mathcal{F}^{*}_{c}(\overline{\Omega_{+}}, \overline{\Omega_{-}})$ with $\int_{G}f\mbox{d}\lambda_{G} \ge \mathcal{C}^{*}_{c} (\overline{\Omega_{+}}, \overline{\Omega_{-}}) - \varepsilon = \mathcal{C}^{*}(\overline{\Omega_{+}}, \overline{\Omega_{-}}) - \varepsilon$. The last equality is by Lemma \ref{equileamma}. Take any $R > 1$ and put $r:= 1/R < 1$. Consider the function $h_{R}(g) := f(Rg)$. Obviously, $\supp (h_{R})_+ \subset r \overline{\Omega_{+}} \subset \Int \Omega_{+}$ and $\supp (h_{R})_{-} \subset r \overline{\Omega_{-}} \subset \Int \Omega_{-}$. Furthermore, $h_{R}$ is a compactly supported positive definite continuous function satisfying $h_{R}(0) = 1$. Hence $h_{R} \in \mathcal{F}^{\ast}_{c}(\Int \Omega_{+}, \Int \Omega_{-})$. Now, $h_{R}(g) = f(Rg) \to f(g)$ as $R \to 1+$. Let $0 < \delta < 1$ and consider the set
	\begin{equation*}
	K:= \{rg: 1 - \delta \le r \le 1, g \in \supp f \} = \left[1 - \delta, 1 \right] \cdot \supp f.
	\end{equation*}
	Observe that $K$, being a continuous image of the compact set $[1- \delta,1] \times \supp f \subset \mathbb{R} \times G$ under the multiplication map $ \mathbb{R} \times G \ni (r, g) \mapsto rg \in G$ is a compact subset of $G$, and hence has finite Haar measure. For $1 < R < \frac{1}{1 - \delta}$, we have that $\supp h_{R} \subset K$, so that $|h_{R}| \le \mathbf{1}_{K} \in L^1(G)$. Hence, by Lebesgue's Dominated Convergence Theorem, we have
	\begin{equation*}
	\lim _{R \to 1+}\int_{G}h_{R}\mbox{d}\lambda_{G} = \int_{G}f\mbox{d}\lambda_{G}.
	\end{equation*}
	In other words, there exists $R_{0} > 1$ such that
	\begin{equation*}
	\left|\int_{G}h_{R}\mbox{d}\lambda_{G} - \int_{G}f\mbox{d}\lambda_{G} \right| < \varepsilon, \mbox{ for all } R \in (1, R_{0}).
	\end{equation*}
	Then, for all $R \in (1, R_{0})$, keeping in mind that $h_{R} \in \mathcal{F}^{\ast}(\Int \Omega_{+}, \Int \Omega_{-})$, we have that
	\begin{equation*}
	\mathcal{C}^{*}(\Int \Omega_{+}, \Int \Omega_{-}) \ge \int_{G}h_{R}\mbox{d}\lambda_{G} > \int_{G}f\mbox{d}\lambda_{G} - \varepsilon \ge \mathcal{C}^{*}(\overline{\Omega_{+}}, \overline{\Omega_{-}}) - 2 \varepsilon.
	\end{equation*}
	Since $\varepsilon > 0$ was arbitrary, we have $\mathcal{C}^{*}(\Int \Omega_{+}, \Int \Omega_{-}) \ge \mathcal{C}^{*}(\overline{\Omega_{+}}, \overline{\Omega_{-}})$ as desired.
\end{proof}

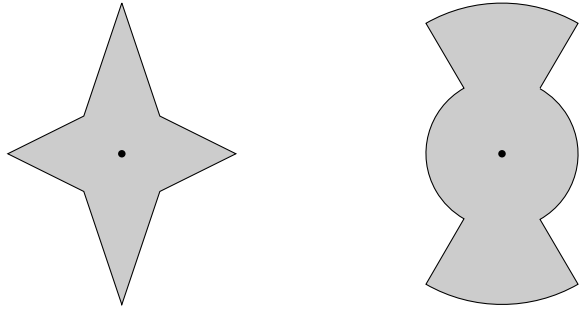
\begin{figure}[htbp]
	\centering
	
	\begin{tikzpicture}\label{starsets}
	
	\draw[thin, fill=gray!40] (0,2) -- (0.5,0.5) -- (1.5,0) -- (0.5,-0.5) -- (0,-2) -- (-0.5,-0.5) -- (-1.5,0) -- (-0.5,0.5) -- (0,2);
	\filldraw[thick, fill=black] (0,0) circle [radius = 1pt];
	
	\filldraw[thin,fill=gray!40] (5.5, 0.87) -- (6,1.73) arc (60:120:2) -- (4.5,0.87) arc(120:240:1) -- (4,-1.73) arc (240:300:2) -- (5.5,-0.87) arc(300:420:1);
	\filldraw[thick, fill=black] (5,0) circle [radius = 1pt];

	\end{tikzpicture}
	\caption{Two open, $0$-symmetric, star-shaped sets. The left set satisfies the condition $r \overline{X} \subset {\rm int}X$, $0 \le r < 1$; the right set does not satisfy this condition --- it is even true that $r \overline{X} \not\subset X$.}
\end{figure}

After proving Theorem \ref{starbodytheorem}, we noticed the paper \cite{mavroudis} where the following theorem was proved.

\begin{theorem}[See {\cite[Theorem 1]{mavroudis}}]\label{mavroudis}
	
	Let  $\Omega \subset \mathbb{R}^{d}$ be an open, bounded, strictly star-shaped\,\footnote{The authors call an open set $\Omega$ strictly star-shaped if $r \overline{\Omega} \subset \Omega$ for all $r \in [0,1)$.} and symmetric set, and assume that $\varepsilon > 0$ and f is a continuous, positive definite function which vanishes outside $\Omega$ (i.e. $\supp f \subset \overline{\Omega}$). Then there is a continuous positive definite function $F$, with $\supp F \subset \Omega$ , such that $\|f - F \|_{\infty} < \varepsilon$.
	
\end{theorem}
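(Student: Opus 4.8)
The plan is to produce $F$ from $f$ by a small dilation, exactly mirroring the construction $h_R(g):=f(Rg)$ used in the proof of Theorem~\ref{starbodytheorem}. Concretely, for a parameter $R>1$ I would set $F(x):=f(Rx)$ and then verify the three required properties in turn: that $F$ is continuous and positive definite, that $\supp F\subset\Omega$, and that $\|f-F\|_\infty<\ve$ once $R$ is close enough to $1$.

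The first property is immediate. The map $x\mapsto Rx$ is an automorphism of $\mathbb{R}^d$, so $F$ is continuous with $F(0)=f(0)$, and for any finite collection $c_i\in\mathbb{C}$, $g_i\in\mathbb{R}^d$ we have $\sum_{i,j}c_i\overline{c_j}F(g_i-g_j)=\sum_{i,j}c_i\overline{c_j}f(Rg_i-Rg_j)\ge 0$ by applying positive definiteness of $f$ to the points $Rg_i$. Hence $F$ is a continuous positive definite function.

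For the support, write $r:=1/R\in(0,1)$. Since dilation by a positive scalar is a homeomorphism of $\mathbb{R}^d$, it commutes with closure, so $\supp F=\tfrac1R\,\supp f\subset r\,\overline{\Omega}$. This is exactly where strict star-shapedness enters: by hypothesis $r\,\overline{\Omega}\subset\Omega$ for every $r\in[0,1)$, whence $\supp F\subset\Omega$, as required. This step is the single-function analogue of the inclusion $r\overline{\Omega_{\pm}}\subset\Int\Omega_{\pm}$ that drives the proof of Theorem~\ref{starbodytheorem}, and it is the place where the geometric assumption on $\Omega$ is genuinely used.

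It remains to estimate $\|f-F\|_\infty$. Since $\Omega$ is bounded, $\supp f\subset\overline{\Omega}$ is compact, so $f$ is uniformly continuous on $\mathbb{R}^d$; fix $M>0$ with $\overline{\Omega}$ contained in the ball $\{|x|\le M\}$. For $|x|>M$ both $f(x)$ and $f(Rx)$ vanish, because $|Rx|>M$ forces $Rx\notin\overline{\Omega}$, so it suffices to control $|f(Rx)-f(x)|$ on $\{|x|\le M\}$; there $|Rx-x|=(R-1)|x|\le(R-1)M$, which drops below the modulus-of-continuity threshold attached to $\ve$ once $R$ is taken sufficiently close to $1$. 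Then $\|f-F\|_\infty<\ve$ and $F$ has all the desired properties. I do not anticipate a real obstacle here; the only point requiring care is that the convergence $F\to f$ be uniform on all of $\mathbb{R}^d$ rather than merely on compact sets, and this is guaranteed precisely because the common support of $f$ and $F$ remains inside the fixed ball $\{|x|\le M\}$ for every $R>1$. The whole argument is thus a one-function version of the limiting procedure already carried out in Theorem~\ref{starbodytheorem}.
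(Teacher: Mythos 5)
Your proposal is correct and follows essentially the same route as the paper, which gives no separate proof of this statement but remarks that the result ``and its proof are essentially the same as the argument used in Theorem~\ref{starbodytheorem}'', i.e., the dilation $F(x)=f(Rx)$, $R>1$, whose support lies in $r\overline{\Omega}\subset\Omega$ (with $r=1/R$) by strict star-shapedness. Your one adjustment --- replacing the dominated-convergence step of Theorem~\ref{starbodytheorem} (which only controls integrals) by a uniform-continuity estimate on a fixed ball containing $\supp f$ and all the $\supp F$ --- is exactly what is needed to obtain the $\|\cdot\|_{\infty}$ conclusion, and it is sound.
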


Theorem \ref{mavroudis} and its proof are essentially the same as the argument used in Theorem \ref{starbodytheorem}, so we give credit to \cite{mavroudis} while observing that our work naturally leads us to consider a result along these lines.

Now we have the following general form of Corollary \ref{star-like}.

\begin{corollary}\label{bodiescorollary}
	Let $G$ be a LCA group which is also a TVS over $\mathbb{R}$. Let $\Omega_{+}$ and $\Omega_{-}$ be bounded $0$-symmetric sets of $G$ such that $r\overline{\Omega_{+}} \subset \Int \Omega_{+} $ and $r \overline{\Omega_{-}} \subset \Int \Omega_{-}$ for all $r \in [0,1)$. Then the following equalities hold:
	\begin{equation*}
	\begin{split}
	\mathcal{C}^{\ast}(\Int \Omega_{+}, \Int \Omega_{-}) = \mathcal{C}^{*}(\Omega_{+}, \Omega_{-}) = \mathcal{C}(\Omega_{+}, \Omega_{-}) &= \mathcal{C}(\Int \Omega_{+}, \Int \Omega_{-}) = \mathcal{C}(\overline{\Int{\Omega}_{+}}, \overline{\Int \Omega_{-} })\\
	&= \mathcal{C}^{*}(\overline{\Int\Omega_{+}}, \overline{\Int \Omega_{-}})
	 = \mathcal{C}^{*}(\overline{\Omega_{+}}, \overline{\Omega_{-}}).
	\end{split}
	\end{equation*}
\end{corollary}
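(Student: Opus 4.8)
The plan is to follow the proof of Corollary \ref{star-like} essentially verbatim, with the sole change that Theorem \ref{starbodytheorem} now plays the role that Proposition \ref{centrally-symmetric} played there. Before any of the earlier equalities can be invoked, however, I would first check that the hypotheses of Corollary \ref{equivalence2} are met, namely that $\Omega_{+}$ and $\Omega_{-}$ are boundary-coherent and that $\Omega_{+}$ is a neighbourhood of $0$. The latter is immediate: taking $r = 0$ in the hypothesis gives $\{0\} = 0\cdot\overline{\Omega_{+}} \subset \Int \Omega_{+}$, so $0 \in \Int \Omega_{+}$.

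For boundary-coherence I would exploit that $G$ is a TVS over $\mathbb{R}$, so for each fixed $R \neq 0$ the dilation $g \mapsto Rg$ is a homeomorphism and the map $R \mapsto Rx$ is continuous. Let $x \in \partial \Omega_{+}$; then $x \neq 0$ (since $0 \in \Int \Omega_{+}$) and $x \in \overline{\Omega_{+}} \setminus \Int \Omega_{+}$. For $R > 1$ set $r := 1/R \in [0,1)$. If $Rx$ were in $\overline{\Omega_{+}}$, then $x = r(Rx) \in r\overline{\Omega_{+}} \subset \Int \Omega_{+}$, contradicting $x \notin \Int \Omega_{+}$; hence $Rx \in \ext \Omega_{+}$. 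Since $Rx \to x$ as $R \to 1^{+}$, we conclude $x \in \overline{\ext \Omega_{+}}$, so $\Omega_{+}$ is boundary-coherent, and the identical argument applies to $\Omega_{-}$.

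With the hypotheses verified, Corollary \ref{equivalence2} supplies the chain of equalities running from $\mathcal{C}(\Omega_{+}, \Omega_{-})$ through to $\mathcal{C}^{*}(\overline{\Omega_{+}}, \overline{\Omega_{-}})$, while Theorem \ref{starbodytheorem} (whose hypotheses are exactly the standing ones) furnishes the missing link $\mathcal{C}^{*}(\Int \Omega_{+}, \Int \Omega_{-}) = \mathcal{C}^{*}(\overline{\Omega_{+}}, \overline{\Omega_{-}})$. It then remains only to slot $\mathcal{C}^{*}(\Omega_{+}, \Omega_{-})$ into the chain. Monotonicity of $\mathcal{C}^{*}$ in its set arguments (from $\Int \Omega_{\pm} \subset \Omega_{\pm}$, which weakens the support constraints and hence enlarges $\mathcal{F}^{*}$) together with the first inequality of \eqref{inequalities} gives
\[
\mathcal{C}^{*}(\Int \Omega_{+}, \Int \Omega_{-}) \le \mathcal{C}^{*}(\Omega_{+}, \Omega_{-}) \le \mathcal{C}(\Omega_{+}, \Omega_{-}),
\]
and since both ends of this sandwich already coincide with $\mathcal{C}^{*}(\overline{\Omega_{+}}, \overline{\Omega_{-}})$ by the previous two sentences, the squeeze forces equality throughout, which is exactly the asserted chain.

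The only genuinely new ingredient relative to Corollary \ref{star-like} is the boundary-coherence verification: for convex sets this was simply taken for granted, whereas here the star-shapedness hypothesis $r\overline{\Omega_{\pm}} \subset \Int \Omega_{\pm}$ must be converted into coherence via the dilation argument above, which is where the TVS structure of $G$ is actually used. Everything following that is a purely formal squeeze, so I anticipate no real obstacle, the substantive analytic content having already been absorbed into Theorem \ref{starbodytheorem}.
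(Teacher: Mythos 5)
Your proposal is correct and takes essentially the same route as the paper: the paper states this corollary without a separate proof, presenting it as the ``general form'' of Corollary \ref{star-like}, whose proof you transplant verbatim with Theorem \ref{starbodytheorem} replacing Proposition \ref{centrally-symmetric}, followed by the same squeeze via \eqref{inequalities}. Your dilation argument showing that the strictly star-shaped sets are boundary-coherent (so that Corollary \ref{equivalence2} applies) correctly fills in the one step the paper leaves implicit, playing the role of the remark ``convex sets are boundary-coherent'' in the proof of Corollary \ref{star-like}.
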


\section*{Acknowledgements}

E.~E.~Berdysheva would like to thank the Isaac Newton Institute for Mathematical Sciences, Cambridge, for support and hospitality during the programme ``Discretization and recovery in high-dimensional spaces'' where part of the work on this paper was undertaken. This work was supported by EPSRC grant no EP/R014604/1. M.~D.~Ramabulana is grateful for the financial support from the Shuttleworth Postgraduate Scholarship of the University of Cape Town. Sz.~Gy.~R\'{e}v\'{e}sz was supported in part by the Hungarian National Research,
Development and Innovation Office, project \#s K-147153 and K-146387.

\end{document}